\documentclass[hidelinks, 10pt]{amsart}

\textwidth15cm
\usepackage[margin=4cm]{geometry}

\usepackage{amssymb}

\usepackage{cite}
\usepackage{hyperref}
\usepackage[all]{xy}
\usepackage{enumerate}
\usepackage{enumitem}
\usepackage{bbm}
\usepackage{tikz-cd}
\usepackage{extpfeil}
\usepackage{relsize}
\usepackage[bbgreekl]{mathbbol}
\usepackage{slashed}
\usepackage{amsfonts}

\newcommand{\eps}{\epsilon}

\newcommand{\bF}{{\mathbb F}}
\newcommand{\bG}{{\mathbb G}}

\newcommand{\bN}{{\mathbb N}}

\newcommand{\bP}{{\mathbb P}}
\newcommand{\bQ}{{\mathbb Q}}

\newcommand{\bW}{{\mathbb W}}

\newcommand{\bZ}{{\mathbb Z}}

\newcommand{\cC}{{\mathcal C}}
\newcommand{\cD}{{\mathcal D}}

\newcommand{\cF}{{\mathcal F}}

\newcommand{\cH}{{\mathcal H}}

\newcommand{\cO}{{\mathcal O}}
\newcommand{\cP}{{\mathcal P}}

\newcommand{\fg}{{\mathfrak g}}

\newcommand{\ta}{\widetilde{a}}

\newcommand{\tw}{\widetilde{w}}

\newcommand{\tF}{\widetilde{F}}

\newcommand{\tS}{\widetilde{S}}

\newcommand{\tX}{\widetilde{X}}


\DeclareMathOperator*{\colim}{colim}

\DeclareMathOperator{\gr}{{gr}}

\DeclareMathOperator{\Spf}{{Spf}}
\DeclareMathOperator{\Sym}{{Sym}}

\DeclareMathOperator{\rk}{{rk}}

\newcommand{\RGamma}{\mathrm{R}\Gamma}

\DeclareMathOperator{\Hom}{{Hom}}

\DeclareMathOperator{\id}{{id}}

\DeclareMathOperator{\op}{{op}}

\DeclareMathOperator{\Spec}{{Spec}}

\DeclareMathOperator{\Map}{Map}
\DeclareMathOperator{\Rep}{Rep}

\newcommand{\dR}{\mathrm{dR}}

\newcommand{\triv}{\mathrm{triv}}
\DeclareMathOperator{\QC}{\mathcal{QC}}

\DeclareMathOperator{\Tot}{Tot}
\DeclareMathOperator{\cone}{cone}
\DeclareMathOperator{\Spin}{Spin}
\DeclareMathOperator{\PGL}{PGL}
\DeclareMathOperator{\PSO}{PSO}
\DeclareMathOperator{\PSp}{PSp}

\newcommand{\Fil}{\mathrm{Fil}}

\newcommand{\an}{\mathrm{an}}

\newcommand{\RHom}{\mathrm{RHom}}
\newcommand{\conj}{\mathrm{conj}}

\newcommand{\obj}{\mathrm{obj}}
\newcommand{\fppf}{\mathrm{fppf}}
\newcommand{\St}{\mathrm{St}}
\newcommand{\CAlg}{\mathrm{CAlg}}
\newcommand{\perf}{\mathrm{perf}}
\newcommand{\cl}{\mathrm{cl}}

\newcommand{\tcF}{\widetilde{\cF}}
\newcommand{\Sm}{\mathrm{Sm}}


\DeclareSymbolFontAlphabet{\mathbb}{AMSb} 
\DeclareSymbolFontAlphabet{\mathbbl}{bbold}

\newcommand{\comment}[1]{}

\makeatletter

\newcommand{\Rmnum}[1]{\expandafter\@slowromancap\romannumeral #1@}
\makeatother


\newtheorem{pr}{Proposition}[section]

\newtheorem{thm}[pr]{Theorem}

\newtheorem{lm}[pr]{Lemma}

\newtheorem{cor}[pr]{Corollary}

\theoremstyle{definition}
\newtheorem{rem}[pr]{Remark}

\theoremstyle{definition}
\newtheorem{example}[pr]{Example}
\newtheorem{definition}[pr]{Definition}
\newtheorem{constr}[pr]{Construction}

\numberwithin{equation}{section}

\tolerance=10000

\begin{document}

\author[]{Alexander Petrov}
\title[Decomposition of de Rham complex for quasi-F-split varieties]{Decomposition of de Rham complex for quasi-F-split varieties}

\begin{abstract}
Using the de Rham stack of Bhatt-Lurie and Drinfeld, we prove that de Rham complex of a smooth quasi-F-split variety over a perfect field of positive characteristic decomposes in all degrees. In particular, smooth proper quasi-F-split varieties have degenerate Hodge-to-de Rham spectral sequence, and satisfy Kodaira-Akizuki-Nakano vanishing. We apply this to prove that the Hodge-to-de Rham spectral sequence for the classifying stack of a reductive group over a field of positive characteristic degenerates.
\end{abstract}

\maketitle

\section{Introduction}

Cohomological invariants of smooth proper varieties over fields of positive characteristic in general fail to satisfy some of the good properties exhibited in characteristic zero, such as degeneration of the Hodge-to-de Rham spectral sequence and Kodaira vanishing. Deligne and Illusie \cite{deligne-illusie} made a fundamental discovery that if a smooth proper variety $X$ over a perfect field $k$ of characteristic $p>0$ lifts over the ring $W_2(k)$ of length $2$ Witt vectors and $\dim X\leq p$, then Hodge-to-de Rham degeneration and Kodaira-Akizuki-Nakano vanishing hold for $X$. They deduced these cohomological statements by proving that the de Rham complex of $X$ decomposes in the derived category of sheaves on $X$. 

For varieties of arbitrary dimension liftability alone is not enough to guarantee degeneration of the Hodge-to-de Rham spectral sequence, as examples in \cite[Theorem 1.1]{petrov} show. In this paper, we point out that for Frobenius-split, and more generally, quasi-Frobenius-split smooth varieties the de Rham complex nevertheless decomposes, regardless of the dimension of $X$.

\begin{thm}[{Corollaries \ref{qfsplit: qfsplit decomposable}, \ref{qfsplit: htdgr akn}}]\label{intro: main}
For a smooth quasi-$F$-split variety $X$ over a perfect field $k$ of characteristic $p$ the de Rham complex $F_{X/k*}\Omega^{\bullet}_{X/k}$ is quasi-isomorphic to the direct sum $\bigoplus\limits_{i\geq 0}\Omega^i_{X^{(1)}/k}[-i]$ of its cohomology sheaves, as a complex of sheaves of $\cO_{X^{(1)}}$-modules.

In particular, for a smooth proper quasi-$F$-split variety $X$ over $k$ the Hodge-to-de Rham spectral sequence $E_1^{i,j}=H^j(X,\Omega^i_{X/k})\Rightarrow H^{i+j}_{\dR}(X/k)$ degenerates at the first page, and the Kodaira-Akizuki-Nakano vanishing holds: for an ample line bundle $L$ on $X$ the cohomology group $H^i(X,\Omega^j_{X/k}\otimes L)$ vanishes when $i+j>\dim X$. 
\end{thm}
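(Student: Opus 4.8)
I would first reduce the ``in particular'' clause to the decomposition of the de Rham complex, and then establish that decomposition on the de Rham stack $X_{\dR}$ of Bhatt--Lurie and Drinfeld, where quasi-$F$-splitness becomes a splitting of the conjugate filtration. Granting $F_{X/k*}\Omega^\bullet_{X/k}\simeq\bigoplus_{i\ge0}\Omega^i_{X^{(1)}/k}[-i]$ in $D(\cO_{X^{(1)}})$: for proper $X$, applying $\RGamma(X^{(1)},-)$ and using that $k$ is perfect gives $\dim_k H^n_{\dR}(X/k)=\sum_{i+j=n}\dim_k H^j(X,\Omega^i_{X/k})$, which is equivalent to degeneration of the Hodge-to-de Rham spectral sequence at $E_1$. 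For Kodaira--Akizuki--Nakano, by Serre duality it suffices to prove $H^q(X,\Omega^p_{X/k}\otimes L^{-1})=0$ for $p+q<\dim X$ and $L$ ample; this follows from the decomposition by the standard Deligne--Illusie argument: tensoring the decomposition by negative powers of an ample bundle pulled back along Frobenius, using that quasi-$F$-splitness is inherited by the Frobenius twists $X^{(r)}$, and combining the Hodge spectral sequence of the twisted de Rham complex with Serre vanishing. This part is routine.

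\textbf{The decomposition.} On the de Rham stack one has $F_{X/k*}\Omega^\bullet_{X/k}\simeq R\pi_{\dR,*}\cO_{X_{\dR}}$ with $\pi_{\dR}\colon X_{\dR}\to X^{(1)}$ the projection to the Frobenius-twisted reduction, and the conjugate filtration on the left matches the filtration of $X_{\dR}$ by infinitesimal order, with $\gr^{\conj}_i\simeq\Omega^i_{X^{(1)}/k}[-i]$ (this filtration can also be studied through the diffracted Hodge complex $\Omega^{\slashed{D}}$ and its Sen operator $\Theta$). So the decomposition amounts to producing a section of $R\pi_{\dR,*}\cO_{X_{\dR}}\to\gr^{\conj}_\bullet$ in $D(\cO_{X^{(1)}})$. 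The quasi-$F$-split hypothesis enters here: the relative Frobenius $\cO_{X^{(1)}}\to F_{X/k*}\cO_X$ (the inclusion of $\gr^{\conj}_0$) carries a canonical refinement through the length-$n$ Witt sheaves $F_{X/k*}W_n\cO_X$, and $X$ is quasi-$F$-split exactly when this refinement admits an $\cO_{X^{(1)}}$-linear retraction for some $n$ (the case $n=1$ being $F$-splitness). On $X_{\dR}$ this Witt-level refinement is encoded in the crystalline / de Rham--Witt structure borne by $R\pi_{\dR,*}$; the plan is to spread a given retraction out to a $\pi_{\dR}$-compatible splitting of structure sheaves and to verify that the induced section of the associated graded is automatically multiplicative for the commutative-algebra structure on $R\pi_{\dR,*}\cO_{X_{\dR}}$, hence extends over every conjugate-filtration step and yields $F_{X/k*}\Omega^\bullet_{X/k}\simeq\bigoplus_i\Omega^i_{X^{(1)}/k}[-i]$ with no restriction on $\dim X$.

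\textbf{Main obstacle.} The hard point is this last one. A Deligne--Illusie $W_2$-lift splits only $\tau^{\le1}$ and then needs $\dim X\le p$ to bootstrap multiplicatively, so one must show that the \emph{single} retraction furnished by quasi-$F$-splitness simultaneously trivializes all of the higher obstructions. I expect this to require a genuinely stack-theoretic description --- of $X_{\dR}$, or of the relevant quotient of a length-$n$ Witt-vector thickening of $X$ by a divided-power Frobenius-kernel group --- precise enough that the retraction $F_{X/k*}W_n\cO_X\to\cO_{X^{(1)}}$ visibly propagates up the whole tower, together with a check that the propagation respects the algebra structure and is therefore dimension-independent. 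Everything else --- matching the conjugate filtration with the infinitesimal-order filtration, the stability of quasi-$F$-splitness under Frobenius twist, and the two cohomological consequences --- should be formal.
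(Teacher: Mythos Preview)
Your reductions of the ``in particular'' clauses to the decomposition are fine and match the paper. The issue is with the mechanism you propose for the decomposition itself.

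You frame the problem as: take the retraction $F_{X/k*}W_n\cO_X\to\cO_{X^{(1)}}$ furnished by quasi-$F$-splitness, and argue that it propagates \emph{multiplicatively} through the conjugate filtration to split every step. But multiplicativity is precisely what fails beyond degree $p$ --- this is why Deligne--Illusie need $\dim X\le p$ to bootstrap from $\tau^{\le 1}$ --- and nothing in a bare $\cO_{X^{(1)}}$-linear retraction forces compatibility with the algebra structure. Your ``main obstacle'' paragraph correctly senses that something stack-theoretic is needed, but the plan as stated has no argument for why the retraction alone kills the higher obstructions.

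The paper's route is different and cleaner: it does \emph{not} try to split the conjugate filtration step by step. Instead it uses that $\nu_X\colon X^{\dR}\to X^{(1)}$ is a $T^{\sharp}_{X^{(1)}}$-\emph{gerbe}, and shows this gerbe becomes \emph{trivial} after base change along $s_{X,n}\colon W_n(X)\times_{\bZ/p^n}\bF_p\to X^{(1)}$. The reason is concrete: $W(R)$ carries a canonical Frobenius lift (Witt vector Frobenius), so by the $\delta$-ring adjunction one gets a natural section of $\nu$ over $\Spec W(R)/p$, hence over each $W_n(X)\times\bF_p$. Once the gerbe is trivial the pushforward is just cohomology of a classifying stack $B(s_{X,n}^*T_{X^{(1)}})^{\sharp}$, which is decomposed in all degrees for free. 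This yields
\[
F_*W_n(\cO_X)/p\otimes_{\cO_{X^{(1)}}}F_*\Omega^\bullet_X\;\simeq\;\bigoplus_i F_*W_n(\cO_X)/p\otimes_{\cO_{X^{(1)}}}\Omega^i_{X^{(1)}}[-i].
\]
Only \emph{now} does the quasi-$F$-splitting enter: the retraction $\tau_n\colon F_*W_n(\cO_X)/p\to\cO_{X^{(1)}}$ is applied to both sides to descend the decomposition to $\cO_{X^{(1)}}$, and one checks via Cartier that the resulting map is a quasi-isomorphism. No multiplicative bootstrap, no Sen operator, no dimension hypothesis.

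So the missing idea is: \emph{trivialize the gerbe first} (via the Frobenius lift on Witt vectors), \emph{then} use the retraction --- rather than trying to use the retraction to split a filtration.
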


An $\bF_p$-scheme is called quasi-$F$-split if for some $n$, the natural map to the mod $p$ reduction of the length $n$ Witt vectors sheaf $\cO_X\xrightarrow{f\mapsto [f]^p}F_*W_n(\cO_X)/p$ has an $\cO_X$-linear splitting. This notion was introduced by Yobuko \cite{yobuko} as a generalization of the notion of $F$-split varieties (corresponding to the case $n=1$) \cite{mehta-ramanathan}, and it is remarkable in that many cohomological consequences of the existence of an $F$-splitting happen to remain valid in the presence of the weaker structure of a quasi-$F$-splitting.

Theorem \ref{intro: main} implies (Corollary \ref{qfsplit: flag varieties}) that for (partial) flag varieties of reductive groups, in particular smooth quadrics, the de Rham complex decomposed, which answers some of the questions raised in \cite[7.11]{illusie-hodge}. When $\dim X\leq p$, Theorem \ref{intro: main} follows readily from \cite[Lemme 2.9]{deligne-illusie} because quasi-$F$-split varieties are liftable over $W_2(k)$ \cite[Theorem 4.4]{yobuko}, \cite[7.2]{kttwyy}. It was also proven for $F$-split varieties of dimension $p+1$ by Achinger-Suh \cite[Theorems A.2, A.4]{achinger-suh} as a consequence of the existence fo the Sen operator on the de Rham complex of a liftable variety, introduced by Bhatt-Lurie and Drinfeld. Kodaira vanishing (i.e. the case $j=\dim X$ in the above) for quasi-$F$-split varieties has also been previously shown in \cite[Theorem 1.7]{nakkajima-yobuko}. Apart from these special cases the above theorem is new, even in the case of $F$-split varieties.

We deduce Theorem \ref{intro: main} from the following decomposition result valid for arbitrary smooth varieties:

\begin{thm}\label{intro: decomposition}
For a smooth variety $X$ over $k$ there is a natural quasi-isomorphism in the derived category of $F_*W(\cO_{X})/p$-modules:
\begin{equation}
F_*W(\cO_{X})/p\otimes_{\cO_{X^{(1)}}} F_{X/k*}\Omega^{\bullet}_{X/k}\simeq \bigoplus\limits_{i\geq 0} F_*W(\cO_{X})/p\otimes_{\cO_{X^{(1)}}}\Omega^i_{X^{(1)}}[-i]
\end{equation}
where the sheaf $F_*W(\cO_{X})/p$ is viewed as an $\cO_{X^{(1)}}$-module via the map of sheaves of algebras $\cO_{X^{(1)}}\to F_*W(\cO_{X})/p$ given by $\cO_{X}\otimes_{k,\varphi_k}k\ni f\otimes 1\mapsto [f^p]$. In particular, there is a natural quasi-isomorphism
\begin{equation}\label{intro: frob decomposition formula}
F_{X/k}^*F_{X/k*}\Omega^{\bullet}_{X/k}\simeq \bigoplus\limits_{i\geq 0}F_{X/k}^*\Omega^i_{X^{(1)}/k}[-i]
\end{equation}
in the derived category of quasi-coherent sheaves on $X$.
\end{thm}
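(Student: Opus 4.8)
The plan is to interpret the left-hand side as the divided-power de Rham complex of a canonical infinitesimal thickening of $X$ equipped with a Frobenius lift, and then to split it by a Deligne--Illusie-type argument which --- thanks to the divided powers carried by the Witt vectors --- succeeds in \emph{every} cohomological degree, with no constraint on $\dim X$. Concretely, the surjection onto the first Witt component $W(\cO_X)/p\twoheadrightarrow\cO_X$ has kernel $V\,W(\cO_X)/p$, and this ideal carries canonical divided powers: those on $V\,W(\cO_X)\subset W(\cO_X)$ descend modulo $p$ because $v_p(p^n/n!)\ge 1$ for all $n\ge 1$. Hence $\mathfrak{T}:=\Spec_{X^{(1)}}\bigl(F_*W(\cO_X)/p\bigr)$, viewed over $X^{(1)}$ via $f\otimes 1\mapsto[f^p]$, is a divided-power thickening of $X$ over $X^{(1)}$ --- i.e.\ an object of the crystalline site of $X/X^{(1)}$, equivalently a point $\iota\colon\mathfrak{T}\to X^{\dR}$ of the de Rham stack of Bhatt--Lurie and Drinfeld --- and it comes with a lift of the relative Frobenius of $X$, namely the Witt-vector Frobenius $F$. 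Through the de Rham stack description one then identifies $F_*W(\cO_X)/p\otimes_{\cO_{X^{(1)}}}F_{X/k*}\Omega^{\bullet}_{X/k}$ with the divided-power de Rham complex of $\mathfrak{T}$ relative to $X^{(1)}$; that a single thickening computes this is a mild input, since $F_*W(\cO_X)/p$ is, \'etale-locally on $X^{(1)}$, a divided-power-smooth $\cO_{X^{(1)}}$-algebra.

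The core of the argument is to construct a map of complexes of $F_*W(\cO_X)/p$-modules
$$\bigoplus_{i\ge 0}\bigl(F_*W(\cO_X)/p\bigr)\otimes_{\cO_{X^{(1)}}}\Omega^i_{X^{(1)}}[-i]\;\longrightarrow\;F_*W(\cO_X)/p\otimes_{\cO_{X^{(1)}}}F_{X/k*}\Omega^{\bullet}_{X/k}$$
inducing the inverse Cartier isomorphism on cohomology sheaves; since the cohomology sheaves of the target are $\bigl(F_*W(\cO_X)/p\bigr)\otimes\Omega^i_{X^{(1)}}$ placed in degree $i$, any such map is automatically a quasi-isomorphism. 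In degrees $\le 1$ this is the classical Deligne--Illusie construction applied to the thickening $X\hookrightarrow\mathfrak{T}$ with its Frobenius lift: the usual obstruction in $\mathrm{Ext}^1(\Omega^1_{X^{(1)}},\cO_{X^{(1)}})$ to a second-order lift vanishes because we are \emph{given} a thickening, and the de Rham stack formalism allows one to run the argument in spite of the non-flatness of $W(\cO_X)/p$ over $\bZ/p^2$. The new feature is that inside the divided-power envelope the degree-$1$ map extends multiplicatively to all wedge powers: the expressions $\tfrac{1}{i!}\,(\text{degree-}1\text{ class})^{\wedge i}$ are honest sections because $V\,W(\cO_X)/p$ has divided powers, so one gets the splitting in every degree $i$ with no restriction $i<p$ and no restriction on $\dim X$. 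This is precisely the structure that a general lift of $X$ modulo $p^2$ lacks, which is why liftability alone fails in large dimension whereas the Witt-vector thickening always works.

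I expect the main obstacle to be exactly this extension past degree $p-1$, together with verifying that the resulting family of maps assembles into a morphism of \emph{complexes} rather than merely of graded objects --- i.e.\ that the higher Massey-type obstructions vanish as well. Carrying out the construction through the de Rham stack, rather than through explicit local cocycles, is what makes this tractable: one shows that the divided-power de Rham complex of $\mathfrak{T}$ over $X^{(1)}$ is formal because the Witt-vector Frobenius trivializes its $A_\infty$-structure degree by degree, using at each stage that $F_*W(\cO_X)/p$ carries simultaneously a divided-power structure and a Frobenius lift. Granting Theorem~\ref{intro: decomposition}, the quasi-isomorphism \eqref{intro: frob decomposition formula} follows by applying $\cO_X\otimes^{\mathbf L}_{F_*W(\cO_X)/p}(-)$ along the first-Witt-component map $F_*W(\cO_X)/p\to F_{X/k*}\cO_X$ (which identifies $\cO_X\otimes_{\cO_{X^{(1)}}}(-)$ with $F_{X/k}^*$, as $F_{X/k}$ is flat), and the application to quasi-$F$-split $X$ in Theorem~\ref{intro: main} follows because a quasi-$F$-splitting exhibits $F_{X/k*}\Omega^\bullet_{X/k}$ --- compatibly with the conjugate filtration --- as a direct summand of the left-hand side, hence of a formal complex.
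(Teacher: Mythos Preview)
You have the right central ingredient --- the Witt-vector Frobenius lift on $W(\cO_X)$ --- and you correctly observe that it furnishes a map $\iota\colon\mathfrak{T}\to X^{\dR}$ lying over $s_X\colon\mathfrak{T}\to X^{(1)}$. What you are missing is the structural fact that turns this observation into a one-line proof: the morphism $\nu_X\colon X^{\dR}\to X^{(1)}$ is a \emph{gerbe} for the group scheme $T_{X^{(1)}/k}^{\sharp}$ (the PD-hull of the zero section in the tangent bundle). Your $\iota$ is precisely a section of this gerbe after base change along $s_X$, so the base-changed gerbe is trivial, i.e.\ $X^{\dR}\times_{X^{(1)}}\mathfrak{T}\simeq B_{\mathfrak{T}}(s_X^*T_{X^{(1)}})^{\sharp}$. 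For any vector bundle $E$ one has $R\pi_*\cO_{BE^{\sharp}}\simeq\bigoplus_i\Lambda^iE^{\vee}[-i]$, and flat base change (using that $F_*W_n(\cO_X)/p$ is finite projective over $\cO_{X^{(1)}}$, then passing to the limit) gives the decomposition directly. There are no cocycles, no multiplicative extensions past degree $p-1$, no Massey obstructions, and no $A_\infty$-structure to trivialize: the gerbe property absorbs all of that.

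By contrast, the route you sketch --- identify the left-hand side with a ``PD-de Rham complex of $\mathfrak{T}$ over $X^{(1)}$'' and then split it by a Deligne--Illusie-type construction extended via divided powers --- has genuine gaps at exactly the places you flag. The claim that $F_*W(\cO_X)/p$ is \'etale-locally PD-smooth over $\cO_{X^{(1)}}$ needs both a precise formulation and a proof; the identification of the tensor product with such a PD-de Rham complex is not established; and the sentence ``the Witt-vector Frobenius trivializes its $A_\infty$-structure degree by degree'' is a restatement of the goal, not an argument. Your deduction of \eqref{intro: frob decomposition formula} from the main statement is fine (in fact the paper proves that case first, more simply, using only the tautological section $\pi_X\colon X\to X^{\dR}$ over $F_{X/k}$), but the heart of your proposal is incomplete: without the gerbe property you are left trying to establish formality by hand, which is exactly the difficulty the stacky approach is designed to bypass.
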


The proof of Theorem \ref{intro: main} accesses the de Rham complex through the de Rham stack, introduced in the setting of varieties in positive characteristic by Drinfeld \cite{drinfeld-prismatization} and Bhatt-Lurie \cite{apc}, \cite{bhatt-lurie-prismatization}. A secondary goal of this paper is to demonstrate the utility of the stacky approach to de Rham cohomology -- the proof crucially uses the gerbe property of the de Rham stack of a smooth variety, and we are unaware of any elementary reformulation of this property in terms of the de Rham complex itself.

The decomposition after Frobenius pullback (\ref{intro: frob decomposition formula}) has been previously shown by Bhargav Bhatt by the same method, and by Vadim Vologodsky by a related method relying on the Azumaya property of the algebra of differential operators on $X$. The aforementioned gerbe property of the de Rham stack can be recovered from this Azumaya property, and Theorem \ref{intro: decomposition} can be proved using it instead of the de Rham stack.

We also extend (Proposition \ref{stacks: frobenius decomposition}) decomposition (\ref{intro: frob decomposition formula}) to the case of smooth Artin stacks, by proving smooth descent for Frobenius pullback of sheaves of differential forms on smooth schemes. It turns out that the classifying stack of any reductive group is Frobenius-split:

\begin{pr}[Proposition \ref{stacks: bg is fsplit}]\label{intro: BG fsplit}
Let $G$ be a reductive group over a perfect field $k$ of characteristic $p>0$. Then the natural map $\cO_{BG^{(1)}}\xrightarrow{F^{\#}_{BG}} RF_{BG*}\cO_{BG}$ in the derived category of quasi-coherent sheaves on the classifying stack $BG$ admits a splitting.
\end{pr}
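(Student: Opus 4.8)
The plan is to translate the statement into a question about the cohomology of the Frobenius kernel $G_1:=\ker(F_{G/k}\colon G\to G^{(1)})$ and then to split off its zeroth cohomology using an extra grading. Under the identification $\QCoh(BG)=\Rep G$, the functor $F_{BG}^{*}$ is inflation along the faithfully flat homomorphism $F_{G/k}$, whose kernel is $G_1$, so its derived right adjoint $RF_{BG*}$ is $R\Gamma(G_1,-)$ equipped with the residual $G/G_1=G^{(1)}$-action; equivalently, base change of $RF_{BG*}\cO_{BG}$ along the atlas $\Spec k\to BG^{(1)}$ gives $R\Gamma(G_1,\cO)$. Under this identification $F^{\#}_{BG}$ becomes the canonical map $\cO_{BG^{(1)}}\to R\Gamma(G_1,\cO)$, i.e.\ the inclusion of $H^{0}(G_1,\cO)=\cO_{BG^{(1)}}$, so the Proposition is equivalent to the assertion that this inclusion admits a retraction in $D(\Rep G^{(1)})$. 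Since $R\Gamma(G_1,\cO)$ is concentrated in non-negative cohomological degrees with $H^{0}=\cO_{BG^{(1)}}$, the space of maps $\cO_{BG^{(1)}}\to R\Gamma(G_1,\cO)$ in $D(\Rep G^{(1)})$ is $H^{0}(R\Gamma(BG,\cO_{BG}))=H^{0}(G,k)=k$, one-dimensional; thus the map in question is the unique (up to scalar) one inducing an isomorphism on $H^{0}$, and it suffices to exhibit a direct-sum decomposition of $R\Gamma(G_1,\cO)$ into a copy of $\cO_{BG^{(1)}}$ placed in cohomological degree $0$ and a complex with vanishing $H^{0}$.

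To obtain such a decomposition I would use the grading on $G_1$-cohomology. The graded ring $H^{\bullet}(G_1,\cO)$ is generated over $H^{0}$ in strictly positive cohomological degrees — for $p$ larger than the Coxeter number it is the coordinate ring of the nilpotent cone of $\fg^{(1)}$, generated in degree $2$ — so the cohomological degree (or half of it) takes the value $0$ exactly on $H^{0}$. The plan is to upgrade this numerical grading to an actual $\mathbb{G}_m$-action on the object $R\Gamma(G_1,\cO)\in D(\Rep G^{(1)})$, commuting with the $G^{(1)}$-action and acting on $H^{i}$ through a fixed positive multiple of $i$. Given such an action, $R\Gamma(G_1,\cO)$ is the direct sum of its $\mathbb{G}_m$-weight spaces; the weight-zero summand has cohomology concentrated in degree $0$, equal to $\cO_{BG^{(1)}}$, hence is quasi-isomorphic to $\cO_{BG^{(1)}}[0]$ and is a genuine direct summand, and the canonical map above lands in it. The projection onto this summand — which stays $G^{(1)}$-equivariant once the $\mathbb{G}_m$-action is forgotten — is then the desired splitting. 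Equivalently: such a grading forces the $A_\infty$-structure on $H^{\bullet}(G_1,\cO)$ to be formal, and one projects onto $H^{0}$.

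The hard part will be constructing this $\mathbb{G}_m$-action uniformly in $p$. The naive scaling of $\fg$, or of the nilpotent cone, is not an automorphism of $G_1$, so the grading has to be built on the complex $RF_{BG*}\cO_{BG}$ itself rather than read off from $G_1$; I expect this is precisely where the de Rham stack and Sen operator technology of the paper enters, the Sen operator supplying (at least) the mod-$p$ shadow of the weight operator on the relevant complex, with the $W(\cO)/p$-linear or prismatic refinements used elsewhere in the paper promoting it to an honest action. For $p$ larger than the Coxeter number one also has the alternative route through the known identification of $R\Gamma(G_1,\cO)$ with $R\Gamma(T^{*}(G/B),\cO)$, which carries the fiberwise scaling action and has vanishing higher cohomology by Frobenius splitting of $T^{*}(G/B)$. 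The genuinely delicate case is small $p$ — in particular $p$ not good for $G$ — where $H^{\bullet}(G_1,\cO)$ is no longer the coordinate ring of the nilpotent cone and acquires odd-degree classes, so a more hands-on construction of the splitting is needed there.
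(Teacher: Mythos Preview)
Your translation of the problem into a statement about the cohomology of the Frobenius kernel $G_1$ is correct and matches the paper exactly: one must show that the inclusion $k\simeq H^0(G_1,k)\hookrightarrow R\Gamma(G_1,k)$ splits in $D(\Rep G^{(1)})$.

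However, the proposed mechanism for producing the splitting is not the one the paper uses, and as written it has a genuine gap. You ask for a $\bG_m$-action on the object $R\Gamma(G_1,k)\in D(\Rep G^{(1)})$ lifting the cohomological grading; you do not construct it, and your guess that it is supplied by the Sen operator or de Rham stack machinery is off target. Those tools appear in the paper only to decompose the de Rham complex \emph{after} Frobenius pullback (Proposition \ref{stacks: frobenius decomposition}); the $F$-splitting of $BG$ is an independent input that is then combined with that decomposition. So the logical flow is the reverse of what you suggest, and no grading on $R\Gamma(G_1,k)$ is ever produced. Your own acknowledgement that small $p$ would require ``a more hands-on construction'' is exactly where your argument stops short.

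The paper's proof is instead a short, uniform representation-theoretic trick, valid for all $p$. After reducing (via an isogeny lemma) to $G$ semisimple and simply connected over an algebraically closed field, one takes the Steinberg representation $\St$, the irreducible of highest weight $(p-1)\rho$. Its restriction to $G_1$ is irreducible and projective, hence $Q:=\St\otimes\St^{\vee}$ is projective (and therefore injective) as a $G_1$-module, so $R\Gamma(G_1,Q)\simeq H^0(G_1,Q)\simeq k$. The $G$-equivariant map $k\xrightarrow{\id_{\St}} Q$ then induces
\[
R\Gamma(G_1,k)\longrightarrow R\Gamma(G_1,Q)\simeq k
\]
in $D(\Rep G^{(1)})$, which is the desired retraction. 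No grading, formality, or restriction on $p$ is needed.
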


As a consequence of a version of Theorem \ref{intro: main} for stacks (Thoerem \ref{stacks: frobenius decomposition}), we deduce:

\begin{thm}[{Theorem \ref{stacks: main htdr bg}}]\label{intro: classifying stack}
For a reductive group $G$ over a perfect field $k$ of characteristic $p>0$ the Hodge-to-de Rham spectral sequence $E_1^{i,j}=H^j(BG, \Omega^i_{BG})\Rightarrow H^{i+j}_{\dR}(BG/k)$ of the classifying stack $BG$ degenerates at the first page.
\end{thm}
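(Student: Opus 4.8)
The plan is to reduce the Hodge-to-de Rham degeneration for $BG$ to the stacky decomposition result (Theorem \ref{stacks: frobenius decomposition}) together with the Frobenius-splitting of $BG$ established in Proposition \ref{intro: BG fsplit}. First I would recall that $BG$ is a smooth Artin stack over $k$, so that both the Hodge cohomology groups $H^j(BG,\Omega^i_{BG})$ and the de Rham cohomology $H^*_{\dR}(BG/k)$ are defined, and the Hodge-to-de Rham spectral sequence $E_1^{i,j}=H^j(BG,\Omega^i_{BG})\Rightarrow H^{i+j}_{\dR}(BG/k)$ exists; degeneration at $E_1$ is equivalent to the assertion that the de Rham complex $\RGdR(BG/k)$ is isomorphic in the derived category of $k$-vector spaces to $\bigoplus_i \RG(BG,\Omega^i_{BG})[-i]$. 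Since $G$ (hence $BG$) is already defined over $\bF_p$ and base change along $\varphi_k$ is harmless, I will freely identify $BG$ with its Frobenius twist $BG^{(1)}$.

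The key step is to apply the stacky Frobenius decomposition. By the version of Theorem \ref{intro: decomposition} for smooth Artin stacks (Proposition/Theorem \ref{stacks: frobenius decomposition}), there is a natural quasi-isomorphism
\begin{equation}
F_{BG}^*F_{BG*}\Omega^{\bullet}_{BG}\simeq \bigoplus_{i\geq 0}F_{BG}^*\Omega^i_{BG^{(1)}}[-i]
\end{equation}
in the derived category of quasi-coherent sheaves on $BG$. Now I would apply $RF_{BG*}$ and use the projection-type formula together with the splitting of $\cO_{BG^{(1)}}\xrightarrow{F^{\#}_{BG}}RF_{BG*}\cO_{BG}$ from Proposition \ref{intro: BG fsplit}: the Frobenius-splitting produces a retraction that exhibits $\RGdR(BG/k)$, which computes $\RG(BG^{(1)},F_{BG*}\Omega^{\bullet}_{BG})$, as a direct summand — compatibly with the Hodge filtration — of $\RG(BG^{(1)}, RF_{BG*}F_{BG}^*F_{BG*}\Omega^{\bullet}_{BG})$, and the displayed decomposition splits the latter into $\bigoplus_i$ of shifted pieces each built out of $\Omega^i_{BG^{(1)}}$. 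Chasing through, one gets a splitting of the Hodge filtration on $\RGdR(BG/k)$, i.e. a quasi-isomorphism $\RGdR(BG/k)\simeq\bigoplus_i\RG(BG,\Omega^i_{BG})[-i]$, which is exactly $E_1$-degeneration.

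The main obstacle I anticipate is purely the bookkeeping of making the splitting \emph{filtered}: one must check that the retraction coming from the $F$-splitting of $BG$ interacts correctly with the naturality of the decomposition in Theorem \ref{stacks: frobenius decomposition}, so that the induced summand of de Rham cohomology really is the Hodge-filtered piece and not just an abstract direct-sum decomposition of the total complex. A secondary technical point is finiteness/convergence: $BG$ is not proper, so its cohomology is not finite-dimensional, and one should make sure the spectral sequence and the relevant derived-category manipulations (commuting $RF_{BG*}$ with infinite direct sums, applying $\RG(BG^{(1)},-)$) are legitimate — this follows since $F_{BG}$ is affine and everything in sight is bounded below, but it deserves a remark. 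Once these compatibilities are in place, the theorem follows formally.
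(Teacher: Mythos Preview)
Your overall strategy---apply the stacky Frobenius decomposition and then use the $F$-splitting of $BG$---is exactly what the paper does, but there is a genuine conceptual gap in the final step, and two of your technical remarks are incorrect.

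The gap is that the decomposition coming from Theorem \ref{stacks: frobenius decomposition} splits the \emph{conjugate} filtration, not the Hodge filtration. The summands $\Lambda^i L_{BG^{(1)}/k}[-i]$ are the associated graded pieces of the conjugate filtration via the Cartier isomorphism (see Lemma \ref{stacks: basic properties}(3)), so what the argument produces is a quasi-isomorphism $F_{*}\Omega^{\bullet}_{BG/k}\simeq \bigoplus_{i\geq 0}\Lambda^i L_{BG^{(1)}/k}[-i]$ in $D(BG^{(1)})$, hence degeneration of the \emph{conjugate} spectral sequence at $E_2$. This is not ``bookkeeping'': there is no mechanism in sight for promoting the splitting to one compatible with the Hodge filtration. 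The paper closes this gap by the Deligne--Illusie dimension count: each $E_2^{i,j}=H^i(BG^{(1)},\Lambda^j L_{BG^{(1)}/k})$ is finite-dimensional over $k$ (by \cite[Proposition II.4.10(a), Corollary II.4.7(c)]{jantzen}), so conjugate degeneration gives $\dim_k H^n_{\dR}(BG/k)=\sum_{i+j=n}\dim_k H^i(BG^{(1)},\Lambda^j L_{BG^{(1)}/k})$, and Hodge-to-de Rham degeneration then follows by comparing dimensions.

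Two corrections to your technical remarks. First, $F_{BG}$ is \emph{not} affine: for a non-commutative connected reductive $G$ the pushforward $RF_{BG*}\cO_{BG}$ is the group cohomology $\RGamma(G_1,k)$ of the Frobenius kernel, which lives in infinitely many degrees. The projection formula still holds, but it requires a separate check (the paper gives it at the start of the proof of Theorem \ref{stacks: main htdr bg}). Second, the Hodge cohomology groups of $BG$ \emph{are} finite-dimensional in each bidegree---this is precisely the representation-theoretic input from Jantzen cited above, and it is essential rather than a nuisance. Finally, a minor point: a reductive group over a perfect field $k$ need not be defined over $\bF_p$, so identifying $BG$ with $BG^{(1)}$ is not automatic; the paper simply works with $BG^{(1)}$ throughout.
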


When $p=\mathrm{char}\,k$ is not a `torsion' prime for $G$ this has been proven by Totaro \cite[Theorem 0.2]{totaro}. For $p=2$, the degeneration has also been proven for special orthogonal groups $SO(n)$ by Totaro \cite[Theorem 11.1]{totaro}, for $G_2$ and $\Spin(n)$ with $n\leq 11$ by Primozic \cite{primozic}, and for $\PGL_{4m+2}, \PSO_{4m+2}, \PSp_{4m+2}$ for all $m$ by Kubrak-Scavia \cite{kubrak-scavia}. In all of these works the authors have been able to compute explicitly the de Rham and Hodge cohomology in question. Kubrak and Prikhodko proved in \cite[Theorem 1.3.23]{kubrak-prikhodko-modp} that the Hodge-to-de Rham spectral sequence of $BG$ for any reductive group $G$ over $k$ has no non-zero differentials coming out of entries with coordinates $(i,j)$ for $i+j\leq p-1$; our proof is similar to theirs in that we proceed by showing that the conjugate filtration on the de Rham complex of $BG$ is split.

In Section \ref{witt} we collect the preliminary material on de Rham cohomology, Witt vectors and quasi-$F$-splittings, in Section \ref{dr stack} we review the definition and basic properties of the de Rham stack, and prove the decomposition (\ref{intro: frob decomposition formula}) along with Theorem \ref{intro: main} for $F$-split varieties. In Section \ref{qfsplit: section} we prove Theorems \ref{intro: main} and \ref{intro: decomposition} in full generality. Section \ref{stacks} extends the decomposition of the Frobenius pullback of the de Rham complex to smooth Artin stacks, proves Frobenius splitting of $BG$, and deduces that the Hodge-to-de Rham spectral sequence of $BG$ degenerates at the $1$st page. In Section \ref{qrsp} we give another proof of the decomposition (\ref{intro: frob decomposition formula}) that proceeds by explicitly splitting the conjugate filtration on the Frobenius pullback of derived de Rham cohomology of quasiregular semiperfect algebras.

{\bf Acknowledgements. } I am very grateful to Bhargav Bhatt for his suggestion, after hearing the argument in the $F$-split case, to consider the case of quasi-$F$-split varieties. Thanks to Vadim Vologodsky for the discussions of the alternative proofs of the presented results via the Azumaya algebra of differential operators. The result on cohomology of $BG$ arose from discussions with Dmitry Kubrak, to whom I am very thankful for this, as well as for comments on a draft of this text. I also would like to thank H\'el\`ene Esnault, Luc Illusie, Arthur Ogus, Gleb Terentiuk, Jakub Witaszek, and Shou Yoshikawa for useful conversations and comments.

The author was supported by the Clay Research Fellowship, and part of this research was conducted while the author was a member at the Institute for Advanced Study.

\section{Preliminaries}\label{witt}

In this section we introduce the notation for the de Rham complex, review some constructions related to Witt vectors, and recall the notion of the quotient by a quasi-ideal needed for our discussion of the de Rham stack. Throughout the paper, let $p$ be a fixed prime number.

\subsection{De Rham complex.} For a scheme $Y$ over $\bF_p$ we denote by $F_Y$ its absolute Frobenius endomorphism. For a morphism $f:X\to S$ of $\bF_p$-schemes we denote by $X^{(1)}$ the Frobenius twist of $X$, defined as the following fiber product:
\begin{equation}
\begin{tikzcd}
X^{(1)}\arrow[r]\arrow[d] & X\arrow[d, "f"]\\
S\arrow[r,"F_S"] & S
\end{tikzcd}
\end{equation}
We denote by $F_{X/S}:X\to X^{(1)}$ the relative Frobenius endomorphism, induced by the universal property of the fiber product from the absolute Frobenius $F_X:X\to X$. To lighten the notation, we will sometimes drop the subscript of $F_X$.

For a smooth morphism $f:X\to S$ let $\Omega^{\bullet}_{X/S}:=\cO_X\xrightarrow{d}\Omega^1_{X/S}\xrightarrow{d}\ldots$ be the relative de Rham complex, viewed as a complex of sheaves of $f^{-1}(\cO_S)$-modules on $X$. The differential on its pushforward under relative Frobenius $F_{X/S*}\Omega^{\bullet}_{X/S}$ respects the $\cO_{X^{(1)}}$-linear structure on the terms of this complex, and we view $F_{X/S*}\Omega^{\bullet}_{X/S}$ as an object of the derived category of quasi-coherent sheaves $D(X^{(1)})$ on $X^{(1)}$, cf. \cite[1.1]{deligne-illusie}.

\subsection{Animated rings.} The description of the de Rham stack that will be used for our arguments relies crucially on being able to view our scheme $X$ as a derived scheme, i.e. being able to consider its points with values in animated (also known as simplicial) rings. For a commutative ring $A$ we denote by $\CAlg^{\an}_A$ the $\infty$-category of animated commutative $A$-algebras. 

One source of animated rings needed for us is the quotient by a quasi-ideal in a classical ring. Recall, following \cite[3.3]{drinfeld-ring}, that for a commutative algebra $A$ a {\it quasi-ideal} in an $A$-algebra $B$ is a $B$-module $I$ and a map $d:I\to B$ of $B$-modules such that $d(x)\cdot y=d(y)\cdot x$ for all $x, y\in I$. The latter condition is equivalent to $I\xrightarrow{d}B$ forming a differential graded algebra with $I$ in degree $-1$ and $B$ in degree $0$, the graded algebra structure being defined by the algebra structure on $B$ and the $B$-module structure on $I$.

Given a quasi-ideal $I$ in an $A$-algebra $B$, we can form the quotient animated $A$-algebra $\cone(I\xrightarrow{d}B)\in \CAlg_A^{\an}$, see for instance \cite[3.6.3]{drinfeld-ring}. If $I$ is an invertible $B$-module, then the animated $A$-algebra $\cone(I\xrightarrow{d}B)$ is also described in \cite[Construction 2.1]{bhatt-lurie-prismatization}. If $d$ is injective, then this animated $A$-algebra is equivalent to the classical ring quotient $B/I$.

\subsection{Witt vectors.} In this subsection, let $R$ be an arbitrary $\bF_p$-algebra. We denote by $W(R)$ its ring of $p$-typical Witt vectors. It has a ring endomorphism $F:W(R)\to W(R)$ referred to as Frobenius, and an endomorphism of the underlying abelian group $V:W(R)\to W(R)$, called Verschiebung, such that $FV=VF=p$. We first make an observation about Frobenius on the Witt vectors on the level of classical rings and then generalize it to a construction involving animated rings, the latter being relevant for the discussion of the de Rham stack.

The restriction map $W(R)\to R$ factors through $W(R)/p$, inducing a surjective map that we denote by $r_R^{\cl}:W(R)/p\to R$. The reduction modulo $p$ of $F$ coincides with the intrinsic Frobenius endomorphism of the $\bF_p$-algebra $W(R)/p$. Since $FV=0$ on $W(R)/p$, this Frobenius endomorphism annihilates $\ker r_R^{\cl}=VW(R)/pW(R)$, which is to say that it factors as
\begin{equation}\label{witt intro: classical maps}
F:W(R)/p\xrightarrow{r_R^{\cl}}R\xrightarrow{s_R^{\cl}}W(R)/p
\end{equation}
for some $\bF_p$-algebra map $s_R^{\cl}$. The map $s_R^{\cl}$ sends an element $r\in R$ to the $p$-th power $[r]^p\in W(R)/p$ of its Teichmuller representative.

We will now construct analogs of $r^{\cl}_R$ and $s^{\cl}_R$ with $W(R)/p$ replaced by the {\it derived} modulo $p$ reduction of $W(R)$. We denote by $\cone(W(R)\xrightarrow{p}W(p))$ the animated commutative $W(R)$-algebra obtained by taking the quotient by the quasi-ideal $W(R)\xrightarrow{p}W(R)$. Equivalently, it is the derived tensor product $W(R)\otimes^L_{\bZ_p}\bF_p$ in animated commutative rings. 

We will also consider the animated commutative $W(R)$-algebra \begin{equation}\cone(F_*W(R)\xrightarrow{p}F_*W(R))\simeq F_*W(R)\otimes^L_{\bZ_p}\bF_p\end{equation} where $F_*W(R)$ denotes $W(R)$ viewed as an $W(R)$-algebra through the Witt vector Frobenius morphism $F:W(R)\to W(R)$.

\begin{rem}
Since the object of the derived category of $W(R)$-modules underlying $\cone(W(R)\xrightarrow{p}W(R))$ is the cofiber of the multiplication by $p$ map on $W(R)$, the animated ring $\cone(W(R)\xrightarrow{p}W(R))$ has non-trivial homotopy groups only in degrees $0$ and $1$. The ring $\pi_0(\cone(W(R)\xrightarrow{p}W(R)))$ is the quotient $W(R)$ by the ideal $pW(R)$ in the usual sense, and $\pi_1(\cone(W(R)\xrightarrow{p}W(R)))$ is identified with the $p$-torsion subgroup in $W(R)$.

The ring $W(R)$ is $p$-torsion-free if and only if $R$ is reduced\footnote{Multiplication by $p$ equals $VF$ on $W(R)$, the map $V$ is injective, and an element $[r_0]+V[r_1]+V^2[r_2]+\ldots\in W(R)$ is annihilated by $F$ if and only if $r_i^p=0$ for all i. So $\ker p=\ker F$ is non-zero if and only if $R$ has non-zero nilpotetns.}. In this case, $\cone(W(R)\xrightarrow{p}W(R))$ is the classical commutative ring $W(R)/p$.
\end{rem}

There is a natural map of animated $W(R)$-algebras $r_R:\cone(W(R)\xrightarrow{p} W(R))\to R$ induced by the map of quasi-ideals
\begin{equation}
\begin{tikzcd}
W(R)\arrow[d]\arrow[r,"p"] & W(R)\arrow[d] \\
0\arrow[r] & R
\end{tikzcd}
\end{equation}
where the right vertical map is the surjection given by the $0$-th Witt component.

Next, we will construct a natural map $s_R:R\to \cone(F_*W(R)\xrightarrow{p}F_*W(R))$ of animated $W(R)$-algebras whose composition $R\to \cone(F_*W(R)\xrightarrow{p}F_*W(R))\to F_*R$ with $F_*r_R$ is the Frobenius endomorphism of $R$. The map $s_R$ is induced by the map of quasi-ideals
\begin{equation}
\begin{tikzcd}
F_*W(R)\arrow[d,"\id"]\arrow[r,"V"] & W(R)\arrow[d, "F"] \\
F_*W(R)\arrow[r, "p"] & F_*W(R)
\end{tikzcd}
\end{equation}
The quotient by the quasi-ideal $F_*W(R)\xrightarrow{V}W(R)$ is indeed the classical ring $R$ because $V$ is an injective map.

\begin{rem}If $W(R)$ is $p$-torsion-free so that $\cone(F_*W(R)\xrightarrow{p}F_*W(R))$ is concentrated in degree $0$, the map $s_R:R\to F_*W(R)/p$ coincides with the map $s_R^{\cl}$ defined in (\ref{witt intro: classical maps}).
\end{rem}

In other words, the construction of the map $s_R$ shows that the structure of an animated $W(R)$-algebra on $\cone(F_*W(R)\xrightarrow{p}F_*W(R))$ naturally refines to a structure of an animated $R$-algebra: the structure map $W(R)\to \cone(F_*W(R)\xrightarrow{p}F_*W(R))$ factors as the composition $W(R)\to R\xrightarrow{s_R}\cone(F_*W(R)\xrightarrow{p}F_*W(R))$.

As mentioned above, when $R$ is reduced, $\cone(F_*W(R)\xrightarrow{p}F_*W(R))$ coincides with the classical quotient $F_*W(R)/p$. We will need the following flatness property of this quotient in the case of a smooth $R$.

\begin{lm}\label{witt intro: F flat}
For a smooth algebra $R$ over a perfect $\bF_p$-algebra $k$ the quotient $F_*W_n(R)/p$ is a finite projective $R$-module, for every $n$. Moreover, the module $F_*W(R)/p$ is flat over $R$.
\end{lm}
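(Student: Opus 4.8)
The plan is to work locally and analyze $F_*W_n(R)/p=W_n(R)/pW_n(R)$ by putting on it an explicit finite filtration whose graded pieces are Frobenius pushforwards of $R$ and of $\coker(F\colon R\to R)$. So I would first reduce to $R$ smooth affine over the perfect base $k$ of constant relative dimension $d$; then $R$ is regular, in particular reduced, and the one geometric input I will use is the classical fact that the absolute Frobenius $F\colon R\to R$ is finite flat. Consequently $R$ is finite projective over $R^p$, the summand $R^p\cdot 1\subseteq R$ splits off (at every localization its generator $1$ does not lie in $\mathfrak m\cdot R$, by Nakayama), so $\coker(F\colon R\to R)\cong R/R^p$ is finite projective over $R^p\cong R$; and more generally $F_*N$ is finite projective over $R$ whenever $N$ is.

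For fixed $n$, on $M:=W_n(R)/pW_n(R)$ (an $R$-module via the structure map $r\mapsto[r]^p$) I would take the decreasing filtration $\Fil^i M:=(V^iW_n(R)+pW_n(R))/pW_n(R)$ for $i=0,\dots,n$, where $V^iW_n(R)$ is the set of Witt vectors with vanishing first $i$ components. The projection formula $x\cdot V(y)=V(F(x)\cdot y)$ shows each $V^iW_n(R)$ is an $R$-submodule, and $\Fil^0M=M$, $\Fil^nM=0$. Since $pW_n(R)=VF\,W_n(R)\subseteq VW_n(R)$, the $0$-th Witt component identifies $\gr^0M$ with $R$ and turns the $R$-action into multiplication by $r^p$, i.e.\ $\gr^0M\cong F_*R$. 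For $1\le i\le n-1$, the injective shift $V^i\colon W_{n-i}(R)\hookrightarrow W_n(R)$ together with the elementary computation $V^iW_n(R)\cap pW_n(R)=V^iF\,W_{n-i}(R)$ (here reducedness of $R$ is used) identifies $\gr^iM$ with $W_{n-i}(R)/(VW_{n-i-1}(R)+FW_{n-i}(R))$, and the $0$-th Witt component identifies this in turn with $\coker(F\colon R\to R)$, on which $R$ acts through a power of Frobenius. All $\gr^iM$ are thus finite projective $R$-modules; since they are projective the iterated extension splits, so $M$ is finite projective. (One could instead induct on $n$ via $0\to W_{n-1}(R)\xrightarrow{V}W_n(R)\to R\to 0$, but tracking mod-$p$ exactness there is messier than the filtration above.)

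For the ``moreover'' I would pass to the limit. Since $R$ is reduced, $W(R)$ is $p$-torsion-free, and the tower $(W_n(R))_n$ has surjective transition maps, so $\varprojlim^1=0$ and $W(R)=\varprojlim_nW_n(R)$; taking inverse limits of the exact sequences $0\to W_n(R)\xrightarrow{p}W_n(R)\to W_n(R)/p\to 0$ yields $F_*W(R)/p\cong\varprojlim_nF_*W_n(R)/p$. Each transition $W_{n+1}(R)/p\to W_n(R)/p$ is surjective with kernel $\cong V^nR/V^nF R\cong\coker(F\colon R\to R)$ up to a Frobenius twist, hence finite projective; because each $W_n(R)/p$ is itself projective (first part), all these short exact sequences of $R$-modules split, and choosing the splittings inductively presents $F_*W(R)/p$ as $(W_1(R)/p)\oplus\prod_{n\ge 1}\ker(W_{n+1}(R)/p\to W_n(R)/p)$. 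A countable product of flat modules over a Noetherian ring is flat, so $F_*W(R)/p$ is flat over $R$.

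The main obstacle is the bookkeeping in the second paragraph: checking that the $\Fil^i$ are $R$-submodules for the twisted structure map, computing the intersections $V^iW_n(R)\cap pW_n(R)$ (precisely where reducedness of $R$, hence smoothness, is needed), and tracking the induced module structures on the subquotients, which pick up Frobenius twists that are harmless only because $k$ is perfect. Everything else is formal once one knows that Frobenius is finite flat on a smooth scheme over a perfect base.
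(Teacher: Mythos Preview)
Your approach is essentially the same as the paper's: filter $F_*W_n(R)/p$ by the images of $V^iW_n(R)$ and identify the graded pieces as Frobenius twists of $R$ or of $R/R^p$ (the paper cites \cite{kttwyy} for this filtration, writing the pieces as $F^i_*R/F^{i-1}_*R$), then for the limit statement express $F_*W(R)/p$ as an inverse limit of finite projective modules with surjective transition maps and invoke the fact that a countable product of flat modules over a Noetherian ring is flat. The paper presents the last step slightly differently, embedding $\varprojlim M_n$ as a direct summand of $\prod R$ rather than splitting the tower, but this is cosmetic.

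One small slip to fix: the sequences $0\to W_n(R)\xrightarrow{p}W_n(R)\to W_n(R)/p\to 0$ are \emph{not} exact on the left, since $W_n(R)$ has $p$-torsion (for $n\ge 2$ any $V^{n-1}[a]$ is killed by $p$). To get $F_*W(R)/p\cong\varprojlim_n F_*W_n(R)/p$ you should instead take the inverse limit of the short exact sequences $0\to pW_n(R)\to W_n(R)\to W_n(R)/p\to 0$ and then check separately that $\varprojlim_n pW_n(R)=pW(R)$; the latter follows because the system $\ker(p\mid W_n(R))=V^{n-1}R$ has zero transition maps, so $\varprojlim$ of the multiplication-by-$p$ maps $W_n(R)\to pW_n(R)$ is an isomorphism. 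With this correction your argument goes through.
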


\begin{proof}
For the first assertion, the proof of \cite[Proposition 2.9]{kttwyy} endows $F_*W_n(R)/p$ with a finite filtration with graded pieces of the form $F^i_*R/F^{i-1}_*R$. Each of these $R$-modules is finite projective by a local computation, cf. \cite[Proposition 1.1.6]{brion-kumar}.

The second assertion is a consequence of a classical general fact: over a Noetherian ring $R$, the inverse limit $\lim\limits_n M_n$ of a system of finitely generated projective $R$-modules with surjective transition maps is flat. Indeed, $\lim\limits_n M_n$ is a direct summand of an infinite product $\prod\limits_{i=0}^{\infty}R$, because we can establish each $M_n$ as a direct summand of a finite free $R$-module in a way compatible with restriction maps.

Since $R$ is Noetherian, $\prod\limits_{i=0}^{\infty}R$ is flat: to check flatness it suffices to show that for every ideal $I\subset R$ the natural map $I\otimes_R \prod\limits_{i=0}^{\infty}R\to \prod\limits_{i=0}^{\infty}R$ is injective, but since $I$ is finitely presented as an $R$-module, this map can be identified with $\prod\limits_{i=0}^{\infty}I\to \prod\limits_{i=0}^{\infty}R$, which is injective. Hence $\lim\limits_n M_n$ is a direct summand of a flat module and is flat itself.

In particular, $F_*W(R)/p=\lim\limits_n F_*W_n(R)/p$ is flat over $R$.
\end{proof}

We will also need a global version of these constructions. Let $X$ be a reduced\footnote{the following constructions make perfect sense for any $\bF_p$-scheme but we put ourselves in a setting where $W(\cO_X)$ is $p$-torsion-free, to avoid the ambiguity between derived and classical quotient by $p$} scheme over $\bF_p$. Denote by $W(\cO_X)$ the sheaf of rings on the underlying topological space of $X$, whose value on an affine open $\Spec R\subset X$ is $W(R)$. The maps $r_R$ and $s_R$ introduced above give rise to the maps of sheaves of $W(\cO_X)$-algebras
\begin{equation}
s_X:\cO_X\to F_*W(\cO_X)/p\qquad r_X:W(\cO_X)/p\to \cO_X 
\end{equation}
We also consider the composition $s_{X,n}:\cO_X\xrightarrow{s_X}F_*W(\cO_X)/p\to F_*W_n(\cO_X)/p$ with the restriction map to the length $n$ Witt vectors, for every $n\geq 1$. This map of sheaves of algebras in particular endows $F_*W(\cO_X)/p$ and each $F_*W_n(\cO_X)/p$ with a structure of an $\cO_X$-module. 

We denote by $W_n(X)$ the nilpotent thickening of $X$ over $\bZ/p^n$ obtained by gluing together $\Spec W_n(R)$ for all affine open $\Spec R\subset X$. The map $s_{X,n}$ gives rise to a map of $\bF_p$-schemes \begin{equation}\label{witt intro: truncated witt frobenius map}s_{X,n}:W_n(X)\times_{\bZ/p^n}\bF_p\to X\end{equation} whose composition with the natural closed embedding $X=W_1(X)\hookrightarrow W_n(X)\times_{\bZ/p^n}\bF_p$ is the Frobenius on $X$.


\subsection{\texorpdfstring{$F$}{}-splitting and quasi-\texorpdfstring{$F$}{}-splitting.}

Recall the following special class of algebraic varieties introduced by Mehta and Ramanathan in \cite{mehta-ramanathan}:

\begin{definition}
For a scheme $X$ over $\bF_p$ a {\it Frobenius splitting} is an $\cO_X$-linear map $\tau:F_*\cO_X\to \cO_X$ such that the composition $\cO_X\xrightarrow{f\mapsto f^p}F_*\cO_X\xrightarrow{\tau}\cO_X$ equals the identity on $\cO_X$. A scheme admitting a Frobenius splitting is called {\it Frobenius split}, or {\it $F$-split}.
\end{definition}

Yobuko introduced in \cite{yobuko} a generalization of this notion, which happens to have similarly strong cohomological consequences. 

\begin{definition}[{\hspace{1sp}\cite[Definition 4.1]{yobuko}, \cite[2.3]{kttwyy}}]
For a scheme $X$ over $\bF_p$ and an integer $n\geq 1$, a {\it $n$-quasi-$F$-splitting} is a map $\tau_n:F_*W_n(\cO_X)/p\to \cO_X$ of $\cO_X$-modules such that the composition $\cO_X\xrightarrow{s_{X,n}} F_*W_n(\cO_X)/p\xrightarrow{\tau_n}\cO_X$ is the identity on $\cO_X$. A scheme admitting $n$-quasi-$F$-splitting for some $n$ is called {\it quasi-$F$-split}.
\end{definition}
A $1$-quasi-$F$-splitting is the same notion as an $F$-splitting, and a $n$-quasi-$F$-splitting gives rise to a $n'$-quasi-$F$-splitting for any $n'>n$.

Yobuko proved \cite[Theorem 4.4]{yobuko} that for a perfect field $k$ a smooth quasi-$F$-split $k$-scheme $X$ admits a flat lift over $W_2(k)$. 
Combined with the results of \cite{deligne-illusie} this implies that for a smooth proper quasi-$F$-split variety over $k$ of dimension $\leq p$ the Hodge-to-de Rham spectral sequence degenerates, and the Kodaira-Akizuki-Nakano vanishing theorem holds. One of the main goals of the present paper is to show that this is also true for quasi-$F$-split varieties of arbitrary dimension.
 
\section{de Rham stack}\label{dr stack}
\newcommand{\FWR}{\cone(F_*W(R)\xrightarrow{p}F_*W(R))}
Our key tool is the stacky interpretation of de Rham cohomology of schemes in characteristic $p$. In this section we recall the definition and basic properties of the de Rham stack, following \cite{drinfeld-prismatization}, \cite{bhatt-lurie-prismatization}, \cite{bhatt}. Let $k$ be an arbitrary commutative $\bF_p$-algebra, it will serve as our base.

\begin{definition}\label{dr stack: affine base def}For a $k$-scheme $X$ we define its {\it de Rham stack} as a functor from $k$-algebras to groupoids, defined on a $k$-algebra $R$ by
\begin{equation}\label{dr prelim: def formula}
(X/k)^{\dR}:R\mapsto X(\FWR)
\end{equation}
\end{definition}

To evaluate the $k$-scheme $X$ on $\FWR$, we view it here as an animated $k$-algebra via the map $k\to R\xrightarrow{s_R}\FWR$. Recall \cite[Ch.2, \S 2.2.4]{toen-vezzosi} that for a classical $k$-scheme $X$ the value of $X$ on an animated $k$-algebra $A$ is defined by $\colim\limits_{B\to A}X(B)$ where the colimit is taken in the $\infty$-category of spaces over all classical commutative $k$-algebras $B$ equipped with a map to $A$.

We also abbreviate the notation for the absolute de Rham stack $(X/\bF_p)^{\dR}$ to $X^{\dR}$.

\begin{rem}\label{dr stack: definition remarks}
\begin{enumerate}
\item Since the animated ring $\FWR$ has non-zero homotopy groups only in degrees $0$ and $1$, the value of $X$ on it is indeed a $1$-groupoid, rather than an arbitrary space, by \cite[Ch. 2, Corollary 2.2.4.6]{toen-vezzosi}.

\item In \cite[Definition 2.5.3]{bhatt} the de Rham stack of $X$ relative to $k$ is defined by sending a test $k$-algebra $R$ to $X(\bG_a^{\dR}(R))$. Here $\bG_a^{\dR}$ is a ring stack defined in \cite[Definition 2.5.1]{bhatt}, and by \cite[Corollary 2.6.8]{bhatt} it coincides with the fppf-sheafification of the presheaf of animated $k$-algebras $R\mapsto F_*W(R)/p$. Since $H^{>0}_{\fppf}(\Spec R, W)=0$, this presheaf is already a sheaf, so the natural map of animated $k$-algebras $F_*W(R)/p\to \bG_a^{\dR}(R)$ is an equivalence, and Definition \ref{dr stack: affine base def} recovers the same stack.

\item In \cite[Lemma 7.3]{bhatt-lurie-prismatization} it is proven that relative prismatization, and consequently the de Rham stack, is a sheaf for fpqc topology, the main ingredient in the proof being the algebraization result \cite[Theorem 4.1]{bhatt-algebraization}. For the convenience of the reader we give in Lemma \ref{dr stack: etale descent} below a proof of the fact that $X^{\dR}$ is an \'etale sheaf, which amounts to observing that the relevant algebraization result in the special case of the de Rham stack follows directly from square-zero deformation theory.

We will use sheafyness of $X^{\dR}$ repeatedly to reduce some constructions to the case of an affine $X$.  
\end{enumerate}
\end{rem}

\begin{lm}\label{dr stack: etale descent}
Let $R\to S$ be a fully faithful \'etale map of $\bF_p$-algebras. For any $\bF_p$-scheme $X$ the natural map \begin{equation}\label{dr stack: descent map}
  X(W(R)\otimes^L_{\bZ_p}\bF_p)\to\lim\limits_{\Delta}X(W(S^{\otimes_R\bullet})\otimes^L_{\bZ_p}\bF_p)
  \end{equation} is an equivalence.
\end{lm}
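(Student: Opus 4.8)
The plan is to reduce the descent statement to étale descent for the functor of points of $X$ over classical rings, using square-zero deformation theory to control the animated rings $W(T)\otimes^L_{\bZ_p}\bF_p$ in terms of their $\pi_0$. First I would observe that since the map $R\to S$ is faithfully flat and étale, the cosimplicial ring $W(R)\to W(S^{\otimes_R\bullet})$ is a faithfully flat étale cover as well (étaleness is preserved by $W_n(-)$ by the formal étaleness criterion, and faithful flatness passes to the inverse limit), and the same holds after derived reduction mod $p$. The key point is then that $\pi_0$ of $W(T)\otimes^L_{\bZ_p}\bF_p$ is $W(T)/p$, while $\pi_1$ is the $p$-torsion in $W(T)$; for $T$ reduced these rings are just $W(T)/p$, but in general one must argue with the animated structure.

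The main step is to compute $X$ on $W(T)\otimes^L_{\bZ_p}\bF_p$ using the square-zero filtration. The animated ring $A_T:=W(T)\otimes^L_{\bZ_p}\bF_p$ has a Postnikov tower with $\tau_{\leq 0}A_T = W(T)/p$ and the single nonzero higher homotopy group $\pi_1$ placed in degree $1$; the $k$-invariant exhibits $A_T$ as a square-zero extension $A_T \to W(T)/p$ with fiber term $\pi_1 A_T[1]$, where $\pi_1 A_T = (W(T))[p]$ is a $W(T)/p$-module. By square-zero deformation theory (the transitivity triangle / cotangent complex formalism), for a classical scheme $X$ the space $X(A_T)$ fits into a fiber sequence over $X(W(T)/p)$ whose fibers are torsors under $\Map_{W(T)/p}(L_{X}, \pi_1 A_T[1])$, where $L_X$ is pulled back along the chosen map $\Spec(W(T)/p) \to X$. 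Since these cotangent-complex contributions are computed from the quasi-coherent sheaf $L_X$ and the module $\pi_1 A_T$, and since both $W(T)/p$ (as a presheaf in $T$, which is a sheaf since $H^{>0}_{\fppf}(\Spec T, W)=0$) and the pullback of $L_X$ satisfy flat descent along the cover $W(R)/p \to W(S^{\otimes_R \bullet})/p$, the whole fiber sequence is compatible with the descent diagram. So I would run the descent comparison on each stage of this two-step filtration separately and then reassemble.

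Concretely, the key steps in order: (1) reduce to $X$ affine, or rather reduce the problem to étale/flat descent for the presheaves $T\mapsto W(T)/p$ and $T \mapsto \pi_1(W(T)\otimes^L_{\bZ_p}\bF_p)$ on classical $\bF_p$-algebras; (2) establish that $W(R)/p \to W(S^{\otimes_R \bullet})/p$ is a faithfully flat cosimplicial cover and that $W(-)/p$ and the $p$-torsion presheaf satisfy flat (in fact étale) descent — this uses that $W_n$ preserves étale maps and étale base change, plus $H^{>0}_{\fppf}(-, W)=0$; (3) unwind $X(W(T)\otimes^L_{\bZ_p}\bF_p)$ via the square-zero extension $W(T)\otimes^L_{\bZ_p}\bF_p \to W(T)/p$ and the deformation-theoretic fiber sequence governed by $L_X$; (4) observe that each piece of this fiber sequence — the base $X(W(\cdot)/p)$ and the cotangent-complex torsor terms $\Map(L_X, \pi_1[1])$ — commutes with the totalization over $\Delta$, because limits commute with limits and the relevant sheaves satisfy descent; (5) conclude by reassembling the fiber sequences. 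The hard part will be step (3)–(4): making precise the deformation-theoretic description of $X(W(T)\otimes^L_{\bZ_p}\bF_p)$ uniformly in $T$ and checking its compatibility with the cosimplicial structure, i.e. that the $k$-invariant and the cotangent-complex torsors are natural enough in $T$ to pass to the limit. One clean way to package this is to note that $W(-)\otimes^L_{\bZ_p}\bF_p$, as a presheaf of animated rings on classical $\bF_p$-algebras, is itself a sheaf for the étale (indeed fppf) topology — this follows from $H^{>0}_{\fppf}(-,W)=0$ together with the fact that $\mathbb{F}_p$ is a compact generator so that derived reduction commutes with the relevant limits — and then $X(W(-)\otimes^L_{\bZ_p}\bF_p)$, being $X$ evaluated on a sheaf of animated rings valued in a mildly truncated range, inherits étale descent from $X$ itself once one knows $X$ satisfies étale descent on animated rings, which is standard for (derived) schemes.
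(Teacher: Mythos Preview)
Your overall strategy---show that $T\mapsto W(T)\otimes^L_{\bZ_p}\bF_p$ is an \'etale sheaf of animated rings, then lift this to a descent statement for $X(-)$ via square-zero deformation theory---is exactly the paper's approach. The paper first dispatches the affine case by noting that $W(-)\otimes^L_{\bZ_p}\bF_p$ preserves the limit $R\simeq\lim_\Delta S^{\otimes_R\bullet}$ (your ``clean way'' observation), so mapping out of a fixed ring $A$ commutes with the totalization. The general case is then handled by deformation theory.

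There is, however, a real difference in how you set up the deformation-theoretic step, and it creates a gap. You propose to use the Postnikov truncation $W(T)\otimes^L_{\bZ_p}\bF_p\to W(T)/p$ as your square-zero extension. This forces the base of your fiber sequence to be $X(W(T)/p)$, and you then need to know that $X(W(R)/p)\to\lim_\Delta X(W(S^{\otimes_R\bullet})/p)$ is an equivalence for a non-affine $X$. Your justification---that $W(R)/p\to W(S^{\otimes_R\bullet})/p$ is a ``faithfully flat cosimplicial cover''---is not enough: this cosimplicial diagram is \emph{not} the \v{C}ech nerve of the map $W(R)/p\to W(S)/p$ in general (tensor products over $W(R)$ do not commute with the inverse limit defining $W$), so you cannot directly invoke flat descent for the scheme $X$. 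You would need to iterate the deformation-theory argument through the tower $W_n(R)/p$, or otherwise reduce further.

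The paper avoids this entirely by using a different square-zero extension: the full projection $W(R)\otimes^L_{\bZ_p}\bF_p\to R$ (not just to $\pi_0$). Its kernel $I_R$ lives in $D^{[-1,0]}(R)$ and is still square-zero, since $V(x)V(y)=pV(xy)$ vanishes mod $p$. Now the base of the fiber sequence is $X(R)$, where \'etale descent along $R\to S^{\otimes_R\bullet}$ is immediate because $X$ is a classical scheme. The torsor fibers are controlled by $\RHom_R(x^*L_{X/\bF_p},I_R)$, and descent for these follows from the equivalence $I_R\simeq\lim_\Delta I_{S^{\otimes_R\bullet}}$, which is a consequence of the sheaf property of $W(-)\otimes^L_{\bZ_p}\bF_p$ already established. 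So the paper's choice of target for the square-zero extension is what makes the argument go through in one pass; your Postnikov truncation is the natural first guess but leaves you with essentially the same problem one level down.
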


\begin{proof}
Let us first treat the case of an affine $X=\Spec A$. By \'etale descent we have an equivalence $R\simeq \lim\limits_{\Delta}S^{\otimes_R\bullet}$, and since the functor $R\mapsto W(R)\otimes_{\bZ_p}^L\bF_p$ from $\bF_p$-algebras to animated $\bF_p$-algebras preserves limits, we get an equivalence 
\begin{equation}\label{dr stack: wmodp descent}
W(R)\otimes_{\bZ_p}^L\bF_p\to \lim\limits_{\Delta}W(S^{\otimes_R\bullet})\otimes_{\bZ_p}\bF_p\end{equation} even though this limit diagram is no longer the \v{C}ech nerve of an \'etale $W(R)\otimes_{\bZ_p}^L\bF_p$-algebra in general. The assertion of the lemma for $X=\Spec A$ now amounts to the fact that the mapping spaces from $A$ to the LHS and RHS of (\ref{dr stack: wmodp descent}) are equivalent.

To prove the lemma for a general scheme $X$ we apply deformation theory to the square-zero extension $W(R)\otimes_{\bZ_p}^L\bF_p\to R$. Denote the ideal of this extension by $I_R\in D^{[-1,0]}(R)$. The map (\ref{dr stack: descent map}) fits into the commutative diagram
\begin{equation}\label{dr stack: descent map with nu}
\begin{tikzcd}
X(W(R)\otimes^L_{\bZ_p}\bF_p)\arrow[r]\arrow[d] & \lim\limits_{\Delta}X(W(S^{\otimes_R\bullet})\otimes^L_{\bZ_p}\bF_p)\arrow[d] \\
X(R) \arrow[r,"\sim"] & \lim\limits_{\Delta}X(S^{\otimes_R\bullet})
\end{tikzcd}
\end{equation}
where the bottom horizontal arrow is an equivalence by \'etale sheafyness of $X$. Hence it suffices to check that (\ref{dr stack: descent map}) becomes an equivalence after pullback to any point in $*\in X(R)$. Given such a point $x:\Spec R\to X$ the fiber of the right vertical map in (\ref{dr stack: descent map with nu}) is a torsor over $\lim\limits_{\Delta}\tau^{\leq 0}\RHom_{S^{\otimes_R \bullet}}(x^*L_{X/\bF_p}\otimes_R S^{\otimes_R\bullet}, I_{S^{\otimes_R\bullet}})\simeq \lim\limits_{\Delta}\tau^{\leq 0}\RHom_R(x^*L_{X/\bF_p}, I_R)$ while the fiber of the left vertical map is a torsor over $\tau^{\leq 0}\RHom_R(x^*L_{X/\bF_p}, I_R)$. Here $L_{X/\bF_p}$ is the cotangent complex of $X$.

As mentioned above, the natural map $W(R)\otimes_{\bZ_p}^L\bF_p\to \lim\limits_{\Delta}W(S^{\otimes_R\bullet})\otimes^L_{\bZ_p}\bF_p$ is an equivalence, in particular $I_R\to \lim\limits_{\Delta}I_{S^{\otimes_R \bullet}}$ is an equivalence. Therefore the map between fibers of the vertical map (\ref{dr stack: descent map with nu}) is an equivalence, as desired.
\end{proof}

The natural $R$-linear map $F_*r_R:\FWR\to F_*R$ induces a map $X(\FWR)\to X(F_*R)=X^{(1)}(R)$ which gives rise to a map of stacks \begin{equation}\nu_X:(X/k)^{\dR}\to X^{(1)}.\end{equation}
The natural map $s_R:R\to \FWR$ likewise induces a morphisms of stacks
\begin{equation}
\pi_X:X\to (X/k)^{\dR}
\end{equation}
By construction, the composition $\nu_X\circ \pi_X$ is the relative Frobenius morphism $F_{X/k}:X\to X^{(1)}$.

\begin{definition}For a morphism $f:X\to S$ of $\bF_p$-schemes with $S$ not necessarily affine, we define the {\it relative de Rham stack} as the fiber product in presheaves of groupoids on $\bF_p$-algebras:
\begin{equation}
(X/S)^{\dR}:=X^{\dR}\times_{S^{\dR}}S
\end{equation}
where $S$ maps to $S^{\dR}$ via the map $\pi_S$. 
\end{definition}

\begin{rem}
\begin{enumerate}

\item This is consistent with Definition \ref{dr stack: affine base def} in the case $S=\Spec k$ is affine, because for an animated $k$-algebra $A$ the space of $A$-points $X(A)=\Map_k (\Spec A, X)$ is equivalent to $$\Map_{\bF_p}(\Spec A, X)\times_{\Map_{\bF_p}(\Spec A, \Spec k)}*$$ where $*\to \Map_{\bF_p}(\Spec A, \Spec k)$ classifies the structure map $\Spec A\to \Spec k$.

\item If $S$ is a perfect $\bF_p$-scheme then the natural map $(X/S)^{\dR}\to X^{\dR}$ is an equivalence. This would follow from the fact that the natural map $\pi_S:S\to S^{\dR}$ is an equivalence. It suffices to show this in the case $S$ is an affine perfect scheme $\Spec k$. For a test $\bF_p$-algebra $R$ we have $(\Spec k)^{\dR}(R)=\Map_{\bF_p}(k,\FWR)$. In general, for an animated $\bF_p$-algebra $A$ there is a natural equivalence of mapping spaces between animated $\bF_p$-algebras \begin{equation}\label{dr stack: perfect adjunction}\Map_{\bF_p}(k, A)\simeq \Map_{\bF_p}(k, \pi_0(A)^{\perf})\end{equation} where $\pi_0(A)^{\perf}$ is the inverse limit perfection $\lim\limits_{\xleftarrow[\varphi]{}}\pi_0(A)$ of the ring $\pi_0(A)$. Indeed, in the diagram
\begin{equation}
\Map_{\bF_p}(k, A)\xleftarrow[\varphi_A\circ]{}\Map_{\bF_p}(k, A)\xleftarrow[\varphi_A\circ]{}\ldots
\end{equation}
all transition maps can be identified with $\circ \varphi_k$ and hence are equivalences, so its limit is equivalent to $\Map_{\bF_p}(k, A)$. On the other hand, its limit is tautologically identified with the mapping space $\Map_{\bF_p}(k, \lim\limits_{\xleftarrow[\varphi_A]{}} A)$. The perfection $A^{\perf}$ of an animated $\bF_p$-algebra is equivalent to the perfection of its $\pi_0$, by \cite[Corollary 11.9]{bhatt-scholze}, which gives (\ref{dr stack: perfect adjunction}).

Applying this observation to $A=\FWR$ we get $\Map_{\bF_p}(k,\FWR)\simeq \Map_{\bF_p}(k,R)$ because the Frobenius endomorphism of $\pi_0(\FWR)\simeq W(R)/p$ factors through the surjective map $r^{\cl}_R:W(R)/p\to R$, and therefore perfections of $W(R)/p$ and $R$ are identified.
\end{enumerate}
\end{rem}

The morphisms $\pi_X:X\to X^{\dR}$ and $\nu_X:X^{\dR}\to X$ then induce their relative versions
\begin{equation}
\pi_{X/S}:X\to (X/S)^{\dR} \qquad \nu_{X/S}:(X/S)^{\dR}\to X^{(1)}
\end{equation}
For smooth morphisms, cohomology of the structure sheaf on $(X/S)^{\dR}$ recovers the de Rham complex of $X$:

\begin{pr}\label{dr stack: cohomology}
For a smooth morphism $f:X\to S$ of $\bF_p$-schemes there is a natural quasi-isomorphism in $D(X^{(1)})$:
\begin{equation}\label{dr stack: cohomology formula}
R\nu_{X/S*}\cO_{(X/S)^{\dR}}\simeq F_{X/S*}\Omega^{\bullet}_{X/S}
\end{equation}
\end{pr}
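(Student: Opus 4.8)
The plan is to reduce to the affine case, where the de Rham stack has an explicit cosimplicial / Čech-type presentation, and then identify the resulting complex with the de Rham complex via the classical crystalline/infinitesimal cohomology comparison. First, since both sides of (\ref{dr stack: cohomology formula}) are objects of $D(X^{(1)})$ that are compatible with étale localization on $X$ (for the left side this uses Lemma \ref{dr stack: etale descent}, which shows $X^{\dR}$ is an étale sheaf, and hence $R\nu_{X/S*}\cO_{(X/S)^{\dR}}$ satisfies étale descent; for the right side it is the standard fact that $F_{X/S*}\Omega^{\bullet}_{X/S}$ is computed locally), I would reduce to the case $X=\Spec A$, $S=\Spec k$ with $k$ affine and $A$ a smooth $k$-algebra, and even further to the case where $A$ admits an étale coordinate map $k[t_1,\dots,t_d]\to A$.

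Next, in this affine situation I would unwind the definition: $(X/k)^{\dR}$ evaluated on a $k$-algebra $R$ is $X(F_*W(R)/p)$ (using that $R$ reduced makes the cone the classical quotient, plus sheafiness to pass to the reduced case, or just working with the animated $\cone$ throughout). The morphism $\nu_{X/k}$ sends this to $X^{(1)}(R) = X(F_*R)$ via $F_*r_R$. So $R\nu_{X/k*}\cO_{(X/k)^{\dR}}$ is, over an affine $\Spec R\to X^{(1)}$, the global sections of $\cO$ on the fiber $X^{\dR}\times_{X^{(1)}}\Spec R$, and I want to show this fiber is (pro-)represented by the formal neighborhood / divided-power envelope that computes de Rham cohomology. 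Concretely, the key input is that for a smooth $A$ and the surjection-with-nilpotent-or-pd-kernel $F_*W(R)/p \twoheadrightarrow F_*R = R\otimes_{k,\varphi}k$ — whose kernel is $V W(R)/p$, a square-zero (indeed pd-trivial since $p=0$) ideal in $W(R)/p$ after we reduce mod $V^2$, and pro-(square-zero) in general — the set $X(F_*W(R)/p)$ lifting a given point of $X(F_*R)$ is a torsor controlled by $\tau^{\le 0}\RHom$ of the cotangent complex into the kernel, exactly as in the proof of Lemma \ref{dr stack: etale descent}. Filtering $W(R)/p$ by the $V$-adic (equivalently Verschiebung) filtration with graded pieces $\cong F^i_*R$, one sees that $\nu_{X/k}$ exhibits $(X/k)^{\dR}$ as an inverse limit of infinitesimal thickenings of $X^{(1)}$ whose associated graded, upon taking $R\nu_*\cO$, produces the Hodge-filtered pieces $\Omega^{\ge i}_{X^{(1)}/k}$ — this is precisely the de Rham complex.

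The cleanest way to finish, and the route I would actually take, is to cite the identification of $(X/k)^{\dR}$ with Drinfeld's / Bhatt–Lurie's de Rham prismatization together with their theorem that $R\Gamma((X/k)^{\dR},\cO) \simeq R\Gamma_{\dR}(X/k)$; here one only needs the relative-over-$X^{(1)}$, sheafy version, which follows from the absolute statement by base change along $\nu_{X/k}$ once one knows $\nu_{X/k}$ is affine (equivalently, the formal thickening description above). Alternatively, one can give a self-contained argument via the explicit crystalline site: $F_*W(R)/p$ maps to $R$ with pd-kernel, and the Čech–Alexander complex of $X^{(1)}$ with respect to a choice of lift along an étale chart computes both $R\nu_{X/k*}\cO_{(X/k)^{\dR}}$ and $F_{X/k*}\Omega^\bullet_{X/k}$ (the latter by the classical Deligne–Illusie / crystalline comparison, \cite[1.1]{deligne-illusie}).

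\textbf{Main obstacle.} The crux is verifying that $\nu_{X/k}$ is affine — i.e. that the fiber of $X^{\dR}\to X^{(1)}$ over an affine scheme is genuinely affine (pro-represented by a classical ring) rather than merely a formal/animated object — and then matching the resulting affine algebra with the de Rham complex in a way that is functorial and respects the $\cO_{X^{(1)}}$-linear structure. Smoothness of $f$ is exactly what makes the relevant cotangent-complex $\RHom$ concentrated in degree $0$, killing higher homotopy and turning the torsor description into an honest affine bundle; carrying this through the $V$-adic filtration to recover the full de Rham complex (with its differential, not just its cohomology sheaves) is the technical heart of the argument, and is the place where I expect to lean on the Bhatt–Lurie–Drinfeld comparison theorem rather than reprove it by hand.
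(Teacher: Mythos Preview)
Your eventual plan --- cite the Bhatt--Lurie/Drinfeld comparison for the affine case --- is exactly what the paper does. The paper's argument is simply: for affine $S$ invoke \cite[Corollary 2.7.2(3)]{bhatt}, which already gives the equivalence in $D(X^{(1)})$ (not merely on global sections), and then globalize by Zariski descent over the base $S$ rather than by \'etale descent on $X$ as you propose.

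However, your ``main obstacle'' rests on a genuine misconception: the morphism $\nu_{X/k}$ is \emph{not} affine. The paper shows in the proposition immediately following this one that $\nu_{X/S}$ is a gerbe for the group scheme $T^{\sharp}_{X^{(1)}/S}$, and $B_{X^{(1)}}T^{\sharp}$ is not affine over $X^{(1)}$. Your claim that smoothness of $f$ forces $\tau^{\le 0}\RHom(L_X,I_R)$ to be concentrated in degree $0$ overlooks that the ideal $I_R$ of the square-zero extension $\cone(W(R)\xrightarrow{p}W(R))\to R$ lives in $D^{[-1,0]}(R)$ (as noted in the proof of Lemma~\ref{dr stack: etale descent}): whenever $R$ is non-reduced, $W(R)$ has $p$-torsion and $I_R$ has nonzero $H^{-1}$. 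With $L_X=\Omega^1_X$ in degree $0$ this gives $\RHom(\Omega^1_X,I_R)$ a nontrivial $H^{-1}$ as well, so lifts form a genuine groupoid and $(X/k)^{\dR}$ is a gerbe over $X^{(1)}$, not a (pro-)affine bundle. This does not actually derail your proof, since the result you intend to cite already delivers the sheafy statement directly; the ``affine base change'' step you identified as the crux is neither available nor needed.
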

\begin{proof}
In the case of an affine $S$ this is \cite[Corollary 2.7.2 (3)]{bhatt}. The proof in the general case can be deduced via Zariski descent, as we now outline. The de Rham complex $F_{X/S*}\Omega^{\bullet}_{X/S}$ can be identified with $\colim\limits_{j:U\hookrightarrow S}j_*F_{X|_U/U*}\Omega^{\bullet}_{X|_U/U}$ where the colimit in $D(X^{(1)})$ is taken over all affine open subschemes $j:U\hookrightarrow S$.

The formation of de Rham stack of a scheme commutes with fiber products. Therefore, for an open affine $U\hookrightarrow S$ we have $(X|_U)^{\dR}\simeq X^{\dR}\times_{S^{\dR}}U^{\dR}$, and the LHS of (\ref{dr stack: cohomology formula}) can be likewise expressed as $\colim\limits_{U\hookrightarrow S}j_*R\nu_{X|_U/U*}\cO_{(X|_U/U)^{\dR}}$. The natural equivalence (\ref{dr stack: cohomology formula}) for affine $S$ gives rise to an equivalence between functors $\{j:U\hookrightarrow S\}\to D(X^{(1)})$ to the derived $\infty$-category of quasi-coherent sheaves on $X^{(1)}$ given by $U\mapsto j_*R\nu_{X|_U/U*}\cO_{(X|_U/U)^{\dR}}$ and $U\mapsto j_*F_{X|_U/U*}\Omega^{\bullet}_{X|_U/U}$. Passing to colimits over these functors gives the desired equivalence.
\end{proof}

The key additional leverage for studying de Rham cohomology that the de Rham stack provides for us is the structure of a gerbe on the morphism $\nu_{X/S}$. For a vector bundle $E$ on a scheme $Y$, let us view its total space $\Tot_Y(E)\to Y$ as a group scheme over $Y$, fiberwise isomorphic to $\bG_a^{\rk E}$. We denote by $E^{\sharp}$ the divided power envelope of the zero section in $\Tot_Y(E)$, viewed as an affine group scheme over $Y$.

\begin{pr}
For a smooth morphism $f:X\to S$ the morphism $\nu_{X/S}:(X/S)^{\dR}\to X^{(1)}$ has a structure of a gerbe for the group scheme $T^{\sharp}_{X^{(1)}/S}$.
\end{pr}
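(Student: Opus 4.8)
The plan is to reduce to the case where $X$ and $S$ are affine, and there to show that $\nu_{X/S}$ is in fact the \emph{trivial} $T^\sharp_{X^{(1)}/S}$-gerbe, i.e.\ that it is equivalent over $X^{(1)}$ to the relative classifying stack $B_{X^{(1)}}\bigl(T^\sharp_{X^{(1)}/S}\bigr)$. \textbf{Reductions and surjectivity.} Having the structure of a gerbe banded by a given flat affine group scheme is local on the base $X^{(1)}$ for the fppf (indeed Zariski) topology, and the formation of $(X/S)^{\dR}$ together with $\nu_{X/S}$ commutes with Zariski localization on $S$ and on $X$: for an open $U\subseteq X$ one has $(U/S)^{\dR}=\nu_{X/S}^{-1}(U^{(1)})$, since $W(R)/p\to R$ has a square-zero (hence nilpotent) kernel so that $\Spec R\to\Spec W(R)/p$ is a homeomorphism and ``lands in $U$'' is the same condition on both sides. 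So we may assume $S=\Spec k$ and $X=\Spec A$ with $A$ smooth over $k$. That $\nu_{X/S}$ is an fppf epimorphism is then immediate from $\nu_{X/S}\circ\pi_{X/S}=F_{X/S}$: as $X$ is smooth over $S$ the relative Frobenius $F_{X/S}\colon X\to X^{(1)}$ is finite locally free, hence faithfully flat, so any $T\to X^{(1)}$ is, after the fppf base change $T\times_{X^{(1)},F_{X/S}}X$, in the image of $\nu_{X/S}$.

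\textbf{Analysis of the fibers.} The core of the argument is a deformation-theoretic description of the fibers of $\nu_{X/S}$, carried out as in the proof of Lemma~\ref{dr stack: etale descent}. A point $x\in X^{\dR}(R)=X(\FWR)$ lying over $\bar x\in X^{(1)}(R)=X(F_*R)$ is the same as a lift of the $F_*R$-point $\bar x$ of $X$ along the map of animated rings $F_*r_R\colon\FWR\to F_*R$. Unwinding the quasi-ideals defining $s_R$ and $r_R$, this map is $1$-truncated: it is the composite of $\FWR\to\pi_0\FWR=F_*W(R)/p$, whose fiber is $\pi_1\FWR=W(R)[p]$ in degree $1$, with the square-zero extension $F_*W(R)/p\to F_*R$ of ideal $VW(R)/pW(R)$. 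Since $X$ is smooth, $L_{X/S}\simeq\Omega^1_{X/S}$ is finite locally free, so square-zero deformation theory over the \emph{affine} scheme $\Spec F_*R$ yields: the obstruction to lifting $\bar x$ lives in $\Ext^{\geq 1}$ over an affine scheme, hence vanishes; the stack of such lifts over $\Spec R$ is a neutral $T^\sharp$-gerbe; and the automorphism group scheme of any lift is $\bar x^*T_{X^{(1)}/S}\otimes_R\pi_1(\FWR)=\bar x^*T_{X^{(1)}/S}\otimes_R W(R)[p]$, the module $W(R)[p]$ being killed by $\ker r_R^{\cl}$ and hence naturally an $F_*R$-module. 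It remains — and this is the delicate point — to identify this group scheme over $X^{(1)}$, \emph{together with its band structure}, with $\bar x^*T^\sharp_{X^{(1)}/S}$. This comes down to a natural isomorphism $W(\cO_X)[p]\simeq\bG_a^\sharp=\cO^\sharp$, for which one may use the exact sequence of fppf abelian sheaves $0\to\bG_a^\sharp\to W\xrightarrow{F}W\to 0$ — surjectivity of $F$ holding fppf-locally because $p$-th roots are available fppf-locally — identifying $\bG_a^\sharp=\ker F$ with the divided power envelope of the origin in $\bG_a$, and then checking that the divided powers correspond to the canonical divided powers on $W(R)/p$ of \cite[Construction~2.1]{bhatt-lurie-prismatization}; tracking the various Frobenius twists through this comparison is the main bookkeeping burden. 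The same exact sequence also gives $H^1_{\mathrm{fppf}}(\Spec R,\bar x^*T^\sharp_{X^{(1)}/S})\simeq\bar x^*T_{X^{(1)}/S}\otimes_R\bigl(W(R)/FW(R)\bigr)$, matching the set of isomorphism classes of lifts of $\bar x$ computed above.

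\textbf{Conclusion.} Running the previous analysis over an affine open $V=\Spec R\subseteq X^{(1)}$, with $\bar x$ the tautological point, shows that $\nu_{X/S}$ restricted to $V$ is the trivial $T^\sharp_{X^{(1)}/S}|_V$-gerbe $B_V\bigl(T^\sharp_{X^{(1)}/S}|_V\bigr)$; in particular the diagonal $X^{\dR}\to X^{\dR}\times_{X^{(1)}}X^{\dR}$ is an fppf epimorphism since any two $T^\sharp$-torsors are fppf-locally isomorphic. Since being a $T^\sharp_{X^{(1)}/S}$-gerbe is local on $X^{(1)}$, and the identifications $\underline{\Aut}\simeq T^\sharp$ produced above are canonical (functorial in $X$, and $T^\sharp$ is abelian so its bands glue), these local trivializations patch to a gerbe structure on $\nu_{X/S}$ banded by $T^\sharp_{X^{(1)}/S}$ for an arbitrary smooth $f\colon X\to S$. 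I expect the identification of the relative inertia with the \emph{divided-power} group scheme $T^\sharp$ — rather than with the formal completion $\widehat{T}$ or the honest tangent bundle — to be the main obstacle, since this is precisely where the derived structure of $\FWR=\cone(F_*W(R)\xrightarrow{p}F_*W(R))$, and not just its truncation $F_*W(R)/p$, enters.
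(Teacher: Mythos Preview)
Your argument is essentially correct and is almost certainly the content of the references the paper cites: the paper's own proof consists entirely of invoking \cite[Proposition 2.7.1]{bhatt} (and \cite[Proposition 5.12]{bhatt-lurie-prismatization} for the perfect base case) for affine $S$, then Zariski-descending as in Proposition~\ref{dr stack: cohomology}. So you have supplied what the paper outsources.

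A few comments on the execution. Your claim that the gerbe is \emph{trivial} over every affine open of $X^{(1)}$ is correct but worth flagging: it is equivalent to the fact that every smooth affine $k$-scheme lifts to $W_2(k)$ together with a Frobenius lift (cf.\ the remark after Proposition~\ref{dr stack: deltalift-splitting}), and your obstruction-vanishing argument (the obstruction lives in $H^1$ of an object in $D^{[-1,0]}$) is a clean way to see this. Your identification $W[F]\simeq\bG_a^\sharp$ via the fppf sequence $0\to\bG_a^\sharp\to W\xrightarrow{F}W\to 0$ is the right ingredient; the further check that this identification is compatible with the $\bG_a$-module structure (so that for a nontrivial vector bundle $T$ one really gets $T^\sharp$ rather than merely a form of it) is genuinely needed to identify the band globally, and you are right to flag it. One small thing you did not quite state: to know two local sections are fppf-locally isomorphic (the ``diagonal is an epimorphism'' part), you need that any class in $H^0(I_R)=VW(R)/pW(R)$ dies after an fppf cover, which follows from fppf-surjectivity of $F$ on $W$ via the same exact sequence.
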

\begin{proof}
In the case of an affine $S=\Spec k$ this is \cite[Proposition 2.7.1]{bhatt}. If $k$ is a perfect $\bF_p$-algebra, this also follows from \cite[Proposition 5.12]{bhatt-lurie-prismatization} applied to the prism $(A, I)=(W(k),(p))$ and the smooth scheme $X^{(1)}$ over $A/I\simeq k$. The case of a general $S$ follows by Zariski descent, as in Proposition \ref{dr stack: cohomology}: for each affine open $U\subset S$ we have a natural map $X^{(1)}|_U\to B^2_{X^{(1)}_U}T_{X^{(1)}_U/U}^{\sharp}$ classifying the gerbe $(X_U/U)^{\dR}\to X_U^{(1)}$. By Zariski descent they define a map $X^{(1)}\to B^2_{X^{(1)}} T_{X^{(1)}/S}^{\sharp}$ such that the pullback of the universal gerbe $X^{(1)}\to B^2_{X^{(1)}} T_{X^{(1)}/S}^{\sharp}$ along it is equivalent to $(X/S)^{\dR}\to X^{(1)}$.
\end{proof}

Given this gerbe structure, we can readily deduce that the de Rham complex decomposes after Frobenius pullback. This has been previously observed by Bhargav Bhatt, and by Vadim Vologodsky via a different method relying on the Azumaya property of the algebra of differential operators.

\begin{pr}[{\hspace{1sp}\cite[Remark 2.7.5]{bhatt}}]\label{dr stack: decomposition after frobenius}
For a smooth morphism $f:X\to S$ there is a natural quasi-isomorphism in $D(X)$
\begin{equation}\label{dr stack: decomposition after frobenius formula}
F_{X/S}^*F_{X/S*}\Omega^{\bullet}_{X/S}\simeq \bigoplus\limits_{i\geq 0}F^*_{X/S}\Omega^{i}_{X^{(1)}/S}[-i]
\end{equation}
that induces the Frobenius pullback of the Cartier isomorphism on all cohomology sheaves.
\end{pr}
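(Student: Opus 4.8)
The idea is to exploit the gerbe structure on $\nu_{X/S}$ together with the fact that $F_{X/S}=\nu_{X/S}\circ\pi_{X/S}$ (relatively), in order to trivialize the gerbe after pulling back along $F_{X/S}$. Concretely, I would first observe that $R\nu_{X/S*}\cO_{(X/S)^{\dR}}\simeq F_{X/S*}\Omega^{\bullet}_{X/S}$ by Proposition \ref{dr stack: cohomology}, so it suffices to compute $F_{X/S}^*R\nu_{X/S*}\cO_{(X/S)^{\dR}}$. By flat base change along $F_{X/S}:X\to X^{(1)}$, this is $R\nu'_*\cO_{\mathcal G}$ where $\mathcal G := (X/S)^{\dR}\times_{X^{(1)}, F_{X/S}}X$ and $\nu':\mathcal G\to X$ is the projection. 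Now $\mathcal G$ is the $T^{\sharp}_{X^{(1)}/S}$-gerbe obtained by pulling back $\nu_{X/S}$ along $F_{X/S}$, and the section $\pi_{X/S}:X\to (X/S)^{\dR}$ — together with the tautological factorization of $F_{X/S}$ through it — furnishes a section of $\nu'$. Hence $\mathcal G$ is the \emph{trivial} gerbe: $\mathcal G\simeq B_X(F_{X/S}^*T^{\sharp}_{X^{(1)}/S})$.

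The next step is to compute the cohomology of the structure sheaf on this trivial gerbe. For a vector bundle $E$ on a scheme $Y$, the cohomology of $B_Y E^{\sharp}$ with $\cO$-coefficients is the Koszul-type complex computing $\Ext$ of the trivial representation by itself over the divided-power group scheme; explicitly $R\Gamma(B_Y E^{\sharp}, \cO)\simeq \bigoplus_{i\geq 0}\wedge^i E^{\vee}[-i]$, since $E^{\sharp}$ is (the formal/affine PD-thickening whose) Cartier dual is the additive formal group $\widehat{\bG_a}^{\vee}$-type object, and the relevant group cohomology is an exterior algebra on $E^{\vee}$ placed in cohomological degrees equal to their exterior degree. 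Applying this with $Y=X$ and $E=F_{X/S}^*T_{X^{(1)}/S}$, whose dual is $F_{X/S}^*\Omega^1_{X^{(1)}/S}$, gives
\begin{equation}
F_{X/S}^*F_{X/S*}\Omega^{\bullet}_{X/S}\simeq R\nu'_*\cO_{\mathcal G}\simeq \bigoplus_{i\geq 0}F_{X/S}^*\Omega^i_{X^{(1)}/S}[-i],
\end{equation}
which is the claimed decomposition. The naturality in $X$ (with respect to smooth morphisms, or at least open immersions) is automatic from the naturality of all the constructions involved, which also lets one reduce to the affine case for any point-set bookkeeping.

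It remains to identify the induced maps on cohomology sheaves with the Frobenius pullback of the Cartier isomorphism. For this I would trace through the degree $0$ and degree $1$ parts: in degree $0$ the map $\cO_{X^{(1)}}\to \cH^0(F_{X/S*}\Omega^{\bullet}_{X/S})$ is $F_{X/S}^{\#}$, matching the unit of the decomposition; in degree $1$ the connecting map in the gerbe, i.e. the extension class of $(X/S)^{\dR}\to X^{(1)}$, is by construction the one whose associated map $\Omega^1_{X^{(1)}/S}\to \cH^1$ is the inverse Cartier operator, as recorded in \cite[Corollary 2.7.2]{bhatt} / the comparison in Proposition \ref{dr stack: cohomology}; the higher-degree identifications then follow because both sides are multiplicative (the decomposition respects the algebra structure coming from $\wedge^{\bullet}$, and the Cartier isomorphism is a map of graded algebras) so degrees $0$ and $1$ determine everything.

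\textbf{Main obstacle.} The technically delicate point is the clean identification $R\Gamma(B_Y E^{\sharp},\cO)\simeq \bigoplus_i\wedge^i E^{\vee}[-i]$ \emph{together with its multiplicative structure and functoriality}, and making sure the section supplied by $\pi_{X/S}$ really does trivialize the pulled-back gerbe compatibly with $\nu$ — i.e. that the tautological identity $\nu_{X/S}\circ\pi_{X/S}=F_{X/S}$ is exactly the data of a trivialization of $F_{X/S}^*\nu_{X/S}$ as a gerbe, not merely the existence of a point. Once that compatibility is pinned down, everything else is formal; but it is the one place where one must genuinely use how $\pi_{X/S}$ interacts with the $T^{\sharp}_{X^{(1)}/S}$-action, rather than just abstract gerbe theory.
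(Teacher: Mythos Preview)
Your proposal is correct and follows essentially the same route as the paper: pull back the $T^{\sharp}_{X^{(1)}/S}$-gerbe $\nu_{X/S}$ along $F_{X/S}$, use the section $(\pi_{X/S},\id_X)$ to trivialize it, compute $R\nu'_*\cO$ for the trivial gerbe $B_X(F_{X/S}^*T_{X^{(1)}/S})^{\sharp}$ via the cited identification with $\bigoplus_i \Lambda^i E^{\vee}[-i]$, and apply flat base change together with Proposition~\ref{dr stack: cohomology}. Two minor remarks: the paper dispatches the Cartier-isomorphism identification more cheaply than you do, simply observing that one may compose the resulting equivalence with an automorphism of $\bigoplus_i F_{X/S}^*\Omega^i_{X^{(1)}/S}[-i]$ to force the correct map on each cohomology sheaf; and your ``main obstacle'' is not one, since a section of a gerbe \emph{is} by definition a trivialization, so no extra compatibility with the $T^{\sharp}$-action needs to be verified.
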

\begin{proof}
Since the composition $X\xrightarrow{\pi_{X/S}}(X/S)^{\dR}\xrightarrow{\nu_{X/S}}X^{(1)}$ is equal to the relative Frobenius morphism $F_{X/S}$, the pullback $\nu'_{X/S}$ of the morphism $\nu_{X/S}$ along $F_{X/S}$ admits a section:
\begin{equation}\label{dr stack: F pullback diagram}
\begin{tikzcd}
(X/S)^{\dR}\times_{X^{(1)}} X\arrow[r]\arrow[d, "\nu'_{X/S}"] & (X/S)^{\dR} \arrow[d,"\nu_{X/S}"] \\
X\arrow[r, "F_{X/S}"]\arrow[u, bend left=30, "{(\pi_{X/S},\id_X)}"] & X^{(1)}
\end{tikzcd}
\end{equation}
As $(X/S)^{\dR}$ is a gerbe for $T_{X^{(1)}/S}^\sharp$ over $X^{(1)}$, the pullback $(X/S)^{\dR}\times_{X^{(1)}}X$ is a gerbe for the group scheme $(F^*_{X/S}T_{X^{(1)}/S})^{\sharp}$. Since the morphism $\nu_{X/S}'$ has a section, this gerbe is trivial, so the stack $(X/S)^{\dR}\times_{X^{(1)}}X$ is isomorphic to the relative classifying stack $B_X(F^*_{X/S}T_{X^{(1)}/S})^\sharp$.

For a vector bundle $E$ on $X$ the derived pushforward of the structure sheaf along $B_XE^\sharp\to X$ is identified with $\bigoplus\limits_{i\geq 0}\Lambda^i E^{\vee}[-i]$, by \cite[Lemma 7.8]{bhatt-lurie-prismatization} or \cite[Remark 2.4.6]{bhatt}. Therefore, we have an equivalence
\begin{equation}\label{dr stack: split gerbe cohomology formula}
R\nu'_{X/S*}\cO_{(X/S)^{\dR}\times_{X^{(1)}}X}\simeq \bigoplus\limits_{i\geq 0}F_{X/S}^*\Omega^i_{X^{(1)}/S}[-i]
\end{equation}
On the other hand, pushforward along $\nu_{X/S}$ satisfies flat base change by Lemma \ref{proof: gerbe base change} below. That is, $R\nu'_{X/S*}\cO_{(X/S)^{\dR}\times_{X^{(1)}}X}$ is equivalent to $F_{X/S}^*R\nu_{X/S*}\cO_{(X/S)^{\dR}}$ which in turn is identified with $F_{X/S}^*F_{X/S*}\Omega^{\bullet}_{X/S}$ by Proposition \ref{dr stack: cohomology}. Hence the equivalence (\ref{dr stack: split gerbe cohomology formula}) provides the desired decomposition.

The final assertion about the effect of (\ref{dr stack: decomposition after frobenius formula}) on cohomology sheaves can always be ensured by composing with an appropriate automorphism of the RHS.
\end{proof}
\begin{lm}[{\hspace{1sp}\cite[Remark 2.5.5]{bhatt}}]\label{proof: gerbe base change}
Let $X$ be a smooth $S$-scheme, and $f:Y\to X^{(1)}$ be a flat morphism. Consider the fiber square
\begin{equation}
\begin{tikzcd}
Y\times_{X^{(1)}} (X/S)^{\dR}\arrow[r]\arrow[d,"\nu'_{X/S}"] & (X/S)^{\dR}\arrow[d, "\nu_{X/S}"] \\
Y\arrow[r,"f"] & X^{(1)}
\end{tikzcd}
\end{equation}
The pullback $f^*F_{X/S*}\Omega^{\bullet}_{X/S}$ of the de Rham complex is naturally identified with $R(\nu'_X)_*\cO_{Y\times_X X^{\dR}}$.
\end{lm}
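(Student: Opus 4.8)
The plan is to reduce the statement, via Proposition~\ref{dr stack: cohomology}, to a flat base change statement for the gerbe $\nu_{X/S}$, and then to prove the latter using faithfully flat descent along the tautological atlas $\pi_{X/S}\colon X\to(X/S)^{\dR}$. Since $F_{X/S*}\Omega^{\bullet}_{X/S}\simeq R\nu_{X/S*}\cO_{(X/S)^{\dR}}$ by Proposition~\ref{dr stack: cohomology}, the assertion is equivalent to
\[
f^{*}R\nu_{X/S*}\cO_{(X/S)^{\dR}}\ \simeq\ R\nu'_{X/S*}\cO_{Y\times_{X^{(1)}}(X/S)^{\dR}}.
\]
Because the gerbe structure on $\nu_{X/S}$ and Proposition~\ref{dr stack: cohomology} both hold over an arbitrary base $S$, there is no need to first reduce to affine $S$.

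The first, and essentially only substantive, step is to show that $\pi_{X/S}\colon X\to(X/S)^{\dR}$ is affine and faithfully flat. Since $\nu_{X/S}\circ\pi_{X/S}=F_{X/S}$ and $\nu_{X/S}$ is a gerbe for $T^{\sharp}_{X^{(1)}/S}$, its pullback along the finite locally free morphism $F_{X/S}$ is a gerbe for $(F_{X/S}^{*}T_{X^{(1)}/S})^{\sharp}$ carrying the section $(\pi_{X/S},\id_{X})$, hence the trivial gerbe $B_{X}(F_{X/S}^{*}T_{X^{(1)}/S})^{\sharp}$, with $(\pi_{X/S},\id_{X})$ corresponding to the tautological atlas of this classifying stack --- this is exactly the computation already made in the proof of Proposition~\ref{dr stack: decomposition after frobenius}, which does not invoke the present lemma. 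The tautological atlas of $B_{X}G$ is affine and faithfully flat for any affine flat group scheme $G$ over $X$, and here $G=(F_{X/S}^{*}T_{X^{(1)}/S})^{\sharp}$ is affine and flat (locally free as an $\cO_{X}$-module); moreover the projection $(X/S)^{\dR}\times_{X^{(1)}}X\to(X/S)^{\dR}$ is a base change of $F_{X/S}$, hence affine and faithfully flat. As $\pi_{X/S}$ is the composite of the two, it is affine and faithfully flat.

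Granting this, the rest is formal. Let $X_{\bullet}$ be the \v{C}ech nerve of $\pi_{X/S}$; each $X_{n}$ is affine over $(X/S)^{\dR}$, and each composite $X_{n}\to(X/S)^{\dR}\xrightarrow{\nu_{X/S}}X^{(1)}$ is an affine morphism of schemes, as it factors as $X_{n}\to X_{n-1}\to\dots\to X_{0}=X\xrightarrow{F_{X/S}}X^{(1)}$. Faithfully flat descent gives $\cO_{(X/S)^{\dR}}\simeq\Tot\bigl([n]\mapsto\pi_{n*}\cO_{X_{n}}\bigr)$, and since $R\nu_{X/S*}$ is a right adjoint it commutes with this totalization; because the maps $X_{n}\to X^{(1)}$ are affine this yields $R\nu_{X/S*}\cO_{(X/S)^{\dR}}\simeq\Tot\bigl([n]\mapsto(\nu_{X/S}\circ\pi_{n})_{*}\cO_{X_{n}}\bigr)$, a cosimplicial object of $\QCoh(X^{(1)})$ concentrated in degree $0$ whose totalization is computed by the associated cochain complex in cohomological degrees $\ge 0$. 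The same analysis applied to $\cY:=Y\times_{X^{(1)}}(X/S)^{\dR}$ with the atlas $X\times_{X^{(1)}}Y\to\cY$ (a base change of $\pi_{X/S}$, with \v{C}ech nerve $X_{\bullet}\times_{X^{(1)}}Y$) gives $R\nu'_{X/S*}\cO_{\cY}\simeq\Tot\bigl([n]\mapsto\mathrm{pr}_{Y*}\cO_{X_{n}\times_{X^{(1)}}Y}\bigr)$, and ordinary flat base change along $f$ for the affine morphisms $X_{n}\to X^{(1)}$ identifies the $n$-th term with $f^{*}(\nu_{X/S}\circ\pi_{n})_{*}\cO_{X_{n}}$. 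Since $f$ is flat, $f^{*}$ is exact and hence commutes with this (bounded below, possibly unbounded above) totalization term by term; combining the displays gives the desired equivalence, and all the identifications involved are natural. The crux of the argument is the first step --- the triviality of the gerbe after Frobenius pullback, yielding the affine faithfully flat atlas $\pi_{X/S}$; flatness of $f$ is then used precisely to pass $f^{*}$ through the totalization.
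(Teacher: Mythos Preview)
Your argument is correct. The paper does not supply its own proof of this lemma: it is stated with a bare citation to \cite[Remark~2.5.5]{bhatt} and no \texttt{proof} environment follows. So there is nothing in the paper to compare against directly.

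That said, your approach is exactly the standard one implicit in the cited remark: flat base change for the gerbe $\nu_{X/S}$ reduces, via \v{C}ech descent along an affine faithfully flat atlas, to ordinary flat base change for affine morphisms of schemes. Your key observation---that $\pi_{X/S}$ is affine and faithfully flat because it factors as the tautological atlas of $B_X(F_{X/S}^*T_{X^{(1)}/S})^\sharp$ followed by a base change of $F_{X/S}$---is the right way to produce such an atlas, and you are careful to note that this gerbe-triviality computation (borrowed from the first half of the proof of Proposition~\ref{dr stack: decomposition after frobenius}) does not itself rely on the present lemma, so there is no circularity. The passage of $f^*$ through the totalization is justified as you say: the cosimplicial object lives in the heart, so its totalization is the associated cochain complex, and an exact functor applied termwise computes the pullback.
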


Recall that for a morphism $f:X\to S$ of $\bF_p$-schemes we say that $X$ is {\it Frobenius-split relative to} $S$ if there exists an $\cO_{X^{(1)}}$-linear map $\tau:F_{X/S*}\cO_X\to \cO_{X^{(1)}}$ such that the composition $\cO_{X^{(1)}}\xrightarrow{F_{X/S}^{\#}}F_{X/S*}\cO_{X/S}\xrightarrow{\tau}\cO_{X^{(1)}}$ is the identity map. Proposition \ref{dr stack: decomposition after frobenius} implies that a smooth Frobenius split scheme has decomposable de Rham complex:

\begin{pr}\label{dr stack: relative fsplit decomposition}
If a smooth $S$-scheme $X$ is Frobenius-split relative to $S$, then there is a quasi-isomorphism $F_{X/S*}\Omega^{\bullet}_{X/S}\simeq \bigoplus\limits_{i\geq 0}\Omega^i_{X^{(1)}/S}[-i]$ in $D(X^{(1)})$.
\end{pr}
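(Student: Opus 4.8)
The plan is to bootstrap Proposition \ref{dr stack: decomposition after frobenius} — which gives the decomposition of $F_{X/S*}\Omega^{\bullet}_{X/S}$ only after applying $F_{X/S}^*$ — down to the genuine statement over $X^{(1)}$, using the relative Frobenius splitting to descend. The point is that a relative Frobenius splitting $\tau\colon F_{X/S*}\cO_X\to \cO_{X^{(1)}}$ exhibits $\cO_{X^{(1)}}$ as a retract of $F_{X/S*}\cO_X$ as $\cO_{X^{(1)}}$-modules: the composite $\cO_{X^{(1)}}\xrightarrow{F^{\#}_{X/S}} F_{X/S*}\cO_X \xrightarrow{\tau} \cO_{X^{(1)}}$ is the identity. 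Since $F_{X/S*}\cO_X = F_{X/S*}F_{X/S}^*\cO_{X^{(1)}}$ and, more generally, $F_{X/S*}F_{X/S}^*(-) \simeq F_{X/S*}\cO_X \otimes_{\cO_{X^{(1)}}}(-)$ by the projection formula (relative Frobenius being affine, indeed finite flat on a smooth scheme), tensoring the retract above with any object $M\in D(X^{(1)})$ shows that $M$ is a retract of $F_{X/S*}F_{X/S}^* M$ in $D(X^{(1)})$, functorially in $M$.

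So the key steps, in order: \emph{(i)} Record that $F_{X/S}$ is finite flat and that the projection formula gives a natural equivalence $F_{X/S*}F_{X/S}^* M \simeq F_{X/S*}\cO_X\otimes_{\cO_{X^{(1)}}} M$ for $M\in D(X^{(1)})$. \emph{(ii)} Use the relative Frobenius splitting $\tau$ to produce a natural retraction $r_M\colon F_{X/S*}F_{X/S}^* M\to M$ with $r_M\circ (\text{unit}) = \id_M$, natural in $M$. \emph{(iii)} Apply Proposition \ref{dr stack: decomposition after frobenius}: there is a quasi-isomorphism $F_{X/S}^* F_{X/S*}\Omega^{\bullet}_{X/S} \simeq \bigoplus_{i\ge 0} F_{X/S}^*\Omega^i_{X^{(1)}/S}[-i]$ in $D(X)$. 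Push this forward along $F_{X/S}$ to get $F_{X/S*}F_{X/S}^* F_{X/S*}\Omega^{\bullet}_{X/S}\simeq \bigoplus_i F_{X/S*}F_{X/S}^*\Omega^i_{X^{(1)}/S}[-i]$ in $D(X^{(1)})$. \emph{(iv)} Now apply the retraction of step (ii) with $M = F_{X/S*}\Omega^{\bullet}_{X/S}$ to the left-hand side and with $M = \Omega^i_{X^{(1)}/S}$ to each summand on the right: because the retraction is natural, the displayed quasi-isomorphism restricts to a quasi-isomorphism between the retracts, i.e. $F_{X/S*}\Omega^{\bullet}_{X/S}\simeq \bigoplus_{i\ge 0}\Omega^i_{X^{(1)}/S}[-i]$ in $D(X^{(1)})$, as desired.

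The one genuine subtlety — and the step I expect to require the most care — is step (iv): a priori the quasi-isomorphism from step (iii) need not be compatible with the two splitting idempotents $e_{\mathrm{LHS}}$ and $e_{\mathrm{RHS}} = \bigoplus_i e_{\Omega^i}$ coming from step (ii). One fixes this exactly as in the final sentence of the proof of Proposition \ref{dr stack: decomposition after frobenius}: the decomposition on the right is only canonical up to composing with an automorphism of $\bigoplus_i F_{X/S}^*\Omega^i_{X^{(1)}/S}[-i]$, and more robustly, one can simply \emph{conjugate}. Concretely, given any equivalence $\phi\colon F_{X/S*}F_{X/S}^* M \xrightarrow{\sim} F_{X/S*}F_{X/S}^* N$, the composite $M \hookrightarrow F_{X/S*}F_{X/S}^* M \xrightarrow{\phi} F_{X/S*}F_{X/S}^* N \twoheadrightarrow N$ (unit followed by $\phi$ followed by the retraction $r_N$) is an equivalence whenever $\phi$ is compatible with the idempotents; to guarantee compatibility it suffices to observe that both idempotents arise by applying $F_{X/S*}F_{X/S}^*$ to morphisms already defined over $X^{(1)}$ — namely $e = (\text{unit})\circ(\text{counit-type retraction})$ built from $F^{\#}_{X/S}$ and $\tau$ — and that $\phi$ from step (iii), being obtained by $F_{X/S*}$ applied to the $D(X)$-equivalence of Proposition \ref{dr stack: decomposition after frobenius}, commutes with anything of the form $F_{X/S*}(\psi)$ only after we also note that $F_{X/S}^*$ of the relevant idempotent agrees with the pullback idempotent. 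Rather than belabor this, the clean route is: choose the quasi-isomorphism in Proposition \ref{dr stack: decomposition after frobenius} so that on $i$-th cohomology it is the Frobenius pullback of the Cartier isomorphism (this is explicitly asserted there), hence it is $F_{X/S}^*$ of a map already defined on cohomology over $X^{(1)}$; then everything in sight is the image under $F_{X/S*}F_{X/S}^* = F_{X/S*}\cO_X\otimes_{\cO_{X^{(1)}}}(-)$ of a diagram in $D(X^{(1)})$ to which the natural retraction $r$ applies termwise and naturally, and tensoring the splitting idempotent through that diagram yields the result. I would spell this out by first proving the abstract lemma: \emph{if $F\colon X\to Y$ is finite flat and $\cO_Y$ is a retract of $F_*\cO_X$ as $\cO_Y$-algebras' underlying module, then $\id_{D(Y)}$ is a retract of the comonad $F_*F^*$, so any equivalence of objects of the form $F_*F^*M$ that is natural in $M$ descends to an equivalence of the $M$'s} — and then applying it verbatim.
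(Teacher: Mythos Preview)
Your steps (i)--(iii) match the paper exactly: push forward the equivalence of Proposition~\ref{dr stack: decomposition after frobenius} along $F_{X/S}$ and rewrite via the projection formula to get
\[
F_{X/S*}\cO_X\otimes_{\cO_{X^{(1)}}}F_{X/S*}\Omega^{\bullet}_{X/S}\;\simeq\;F_{X/S*}\cO_X\otimes_{\cO_{X^{(1)}}}\bigoplus_{i}\Omega^i_{X^{(1)}/S}[-i].
\]
The paper then defines the candidate quasi-isomorphism exactly as you do in (iv): the composite $F_{X/S*}\Omega^{\bullet}_{X/S}\xrightarrow{F^{\#}\otimes\id}(\cdots)\simeq(\cdots)\xrightarrow{\tau\otimes\id}\bigoplus_i\Omega^i_{X^{(1)}/S}[-i]$.

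The gap is in how you argue this composite is a quasi-isomorphism. Your ``clean route'' asserts that, after normalizing the equivalence in Proposition~\ref{dr stack: decomposition after frobenius} to be the Frobenius pullback of Cartier on cohomology, ``everything in sight is the image under $F_{X/S*}\cO_X\otimes_{\cO_{X^{(1)}}}(-)$ of a diagram in $D(X^{(1)})$.'' This is false, and in fact circular: if the equivalence $\phi$ were $F_{X/S*}\cO_X\otimes\psi$ for some map $\psi$ in $D(X^{(1)})$, then $\psi$ would \emph{already be} the sought-for decomposition of the de Rham complex, with no Frobenius splitting needed. Knowing that $\phi$ acts on cohomology sheaves as the base change of the Cartier isomorphism does not make $\phi$ itself a base-changed map. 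Consequently your idempotent-compatibility argument and the abstract lemma at the end do not apply: $\phi$ is a single morphism between two specific objects, not a natural transformation to which the retraction $r_M$ can be applied functorially.

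The correct (and simpler) conclusion is the one the paper gives: having written down the composite, check it is a quasi-isomorphism by computing on cohomology sheaves. Since $\phi$ induces on $\cH^i$ the map $\id_{F_{X/S*}\cO_X}\otimes C^{-1}$ (where $C^{-1}$ is the Cartier isomorphism), the composite induces $(\tau\otimes\id)\circ(\id\otimes C^{-1})\circ(F^{\#}\otimes\id)=(\tau\circ F^{\#})\otimes C^{-1}=C^{-1}$ on each $\cH^i$, hence is a quasi-isomorphism. No naturality or idempotent-matching is required.
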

\begin{proof}
Applying pushforward $F_{X/S*}$ along the finite flat morphism $F_{X/S}:X\to X^{(1)}$ to the quasi-isomorphism (\ref{dr stack: decomposition after frobenius formula}) we obtain a quasi-isomorphism in $D(X^{(1)})$:
\begin{equation}\label{dr stack: decomposition after tensor with fpush}
F_{X/S*}\cO_{X}\otimes_{\cO_{X^{(1)}}}F_{X/S*}\Omega^{\bullet}_{X/S}\simeq F_{X/S*}\cO_X\otimes_{\cO_{X^{(1)}}}\bigoplus\limits_{i\geq 0}\Omega^i_{X^{(1)}/S}[-i].
\end{equation}
Given a Frobenius splitting $\tau: F_{X/S*}\cO_X\to \cO_{X^{(1)}}$ we will define the desired quasi-isomorphism as the composition
\begin{multline}\label{dr stack: f split decomposition formula}
F_{X/S*}\Omega^{\bullet}_{X/S}\xrightarrow{F^{\#}_{X/S}\otimes\id}F_{X/S*}\cO_{X}\otimes_{\cO_{X^{(1)}}}F_{X/S*}\Omega^{\bullet}_{X/S}\simeq \\ \simeq F_{X/S*}\cO_X\otimes_{\cO_{X^{(1)}}}\bigoplus\limits_{i\geq 0}\Omega^i_{X^{(1)}/S}[-i]\xrightarrow{\tau\otimes\id}\bigoplus\limits_{i\geq 0}\Omega^i_{X^{(1)}/S}[-i]
\end{multline}
By the final sentence of Proposition \ref{dr stack: decomposition after frobenius} the map induced by quasi-isomorphism (\ref{dr stack: decomposition after tensor with fpush}) on cohomology sheaves is obtained by extending scalars from $\cO_{X^{(1)}}$ to $F_{X/S*}\cO_{X}$. This implies that the composition (\ref{dr stack: f split decomposition formula}) induces an isomorphism on cohomology sheaves. 
\end{proof}

\section{Quasi-\texorpdfstring{$F$}{}-split varieties}\label{qfsplit: section}

We will now strengthen Proposition \ref{dr stack: decomposition after frobenius} to prove that de Rham complex of a smooth variety splits already after being pulled back along the morphism $s_{X,n}:W_n(X)\times_{\bZ/p^n}\bF_p\to X$, for every $n$, and consequently de Rham complex of a smooth quasi-$F$-split variety decomposes.

We restrict ourselves to working over a perfect $\bF_p$-algebra $k$. For a smooth $k$-scheme $X$ we denote by $F_*\Omega^{\bullet}_X$ the de Rham complex, viewed as a complex of coherent sheaves on $X$, dropping the perfect base ring $k$ from the notation. For for every $n\geq 1$ we view $F_*W_n(\cO_X)/p$ as a sheaf of algebras on $X^{(1)}$ endowed with a structure of an $\cO_{X^{(1)}}$-algebra via the map $s_{X, n}:\cO_{X^{(1)}}\to F_*W_n(\cO_X)/p$ given by $\cO_X\otimes_{k,\varphi_k}k\ni r\otimes 1\mapsto [r^p]$ on local sections. As in (\ref{witt intro: truncated witt frobenius map}), we denote by the same symbol $s_{X,n}:W_n(X)\times_{\bZ/p^n}\bF_p\to X^{(1)}$ the corresponding map of $k$-schemes.
 
\begin{thm}\label{qfsplit: main decomposition}
For a smooth scheme $X$ over a perfect $\bF_p$-algebra $k$, for each $n$ there is a natural equivalence
\begin{equation}\label{qfsplit: main formula}
F_{*}W_n(\cO_X)/p\otimes_{\cO_{X^{(1)}}}F_{*}
\Omega^{\bullet}_{X}\simeq\bigoplus\limits_{i\geq 0}F_*W_n(\cO_X)/p\otimes_{\cO_{X^{(1)}}}\Omega^i_{X^{(1)}}[-i]
\end{equation}
in the derived category of quasi-coherent $F_*W_n(\cO_X)/p$-modules on $X^{(1)}$.
\end{thm}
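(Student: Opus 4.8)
The plan is to upgrade the gerbe-splitting argument of Proposition \ref{dr stack: decomposition after frobenius} from the relative Frobenius $F_{X/k}$ to the map $s_{X,n}$, exploiting that $s_{X,n}$ lifts $F_{X/k}$ along the Witt thickening. The key observation is that $s_R\colon R\to F_*W(R)/p$ makes $F_*W_n(R)/p$ into an $R$-algebra, and that the composition $X\xrightarrow{\pi_{X/k}}(X/k)^{\dR}\xrightarrow{\nu_{X/k}}X^{(1)}$, which factors the relative Frobenius, should be refined: I want to produce a morphism $W_n(X)\times_{\bZ/p^n}\bF_p\to (X/k)^{\dR}$ whose composition with $\nu_{X/k}$ is $s_{X,n}\colon W_n(X)\times_{\bZ/p^n}\bF_p\to X^{(1)}$. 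This is where the stacky description is essential: a map from (the reduction of) $W_n(X)$ to $(X/k)^{\dR}=X(\cone(F_*W(-)\xrightarrow{p}F_*W(-)))$ amounts, on an affine $\Spec R$, to a map of animated $k$-algebras $\cone(F_*W(W_n(R)\otimes\bF_p)\xrightarrow{p}\cdot)\to$ something receiving a map from $\cO_X$; concretely I expect to build it from the ghost-component/Witt-functoriality maps, using that $W(W_n(R)/p)$ carries Frobenius-compatible structure. Once such a lift $\sigma_n\colon W_n(X)\times\bF_p\to (X/k)^{\dR}$ is in hand, base-changing the gerbe $\nu_{X/k}$ along $s_{X,n}$ produces a gerbe for $(s_{X,n}^*T_{X^{(1)}/k})^{\sharp}$ which is trivialized by $\sigma_n$, exactly as in diagram (\ref{dr stack: F pullback diagram}).

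Granting the trivialization, the argument proceeds as in the proof of Proposition \ref{dr stack: decomposition after frobenius}: the pullback $(X/k)^{\dR}\times_{X^{(1)}}(W_n(X)\times\bF_p)$ becomes the relative classifying stack $B(s_{X,n}^*T_{X^{(1)}/k})^{\sharp}$ over $W_n(X)\times\bF_p$, whose pushforward of the structure sheaf is $\bigoplus_{i\geq 0}s_{X,n}^*\Omega^i_{X^{(1)}/k}[-i]$ by \cite[Lemma 7.8]{bhatt-lurie-prismatization}. On the other hand, flat base change for $\nu_{X/k}$ (Lemma \ref{proof: gerbe base change}) requires $s_{X,n}$ to be flat, which is precisely the content of Lemma \ref{witt intro: F flat} (each $F_*W_n(R)/p$ is finite projective over $R$, hence over $\cO_{X^{(1)}}$ after the evident twist). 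Combining, $s_{X,n}^*F_{X/k*}\Omega^{\bullet}_{X/k}\simeq \bigoplus_i s_{X,n}^*\Omega^i_{X^{(1)}/k}[-i]$; unwinding the pullback $s_{X,n}^*$ as $-\otimes_{\cO_{X^{(1)}}}F_*W_n(\cO_X)/p$ (via Lemma \ref{witt intro: F flat}, so no derived tensor correction is needed) gives formula (\ref{qfsplit: main formula}) as $F_*W_n(\cO_X)/p$-modules. As before, one can post-compose with an automorphism of the right-hand side so that the induced maps on cohomology sheaves are the base change of the Cartier isomorphism.

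The main obstacle I anticipate is the first step: constructing the lift $\sigma_n$ of $s_{X,n}$ through $\nu_{X/k}$, i.e.\ producing a natural map of animated $k$-algebras $\cone(F_*W(R)\xrightarrow{p}F_*W(R))\to$ (an animated algebra receiving $\cO_X$) that is compatible with $W_n(R)/p$-points. The cleanest route is probably functorial: apply the de Rham stack construction to the transition maps $X\to W_n(X)\times\bF_p\to X^{(1)}$ (the first being the closed embedding $W_1=X\hookrightarrow W_n(X)\times\bF_p$, the composite being $s_{X,n}$) and observe that $s_{X,n}$ factors through $(X/k)^{\dR}$ because the relevant Witt-vector quasi-ideal $F_*W(W_n(R)/p)\xrightarrow{V}W(W_n(R)/p)$ maps compatibly to $F_*W(R)/p$ — concretely, this should follow from the identity $FV=0$ on $W(-)/p$ together with the truncation maps $W(\cdot)\to W_n(\cdot)$, so that the ``Frobenius twist'' $W_n(R)/p \to W_n(R)/p$ is absorbed. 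I would verify this on affines using the explicit description of $s_R$ via the map of quasi-ideals in Section \ref{witt}, and then globalize by \'etale descent (Lemma \ref{dr stack: etale descent}); the point is that all maps in sight are the evident natural ones, and the content is that they land in the right stack. A fallback, should the direct lift prove awkward, is to run the argument of Section \ref{qrsp} (explicit splitting of the conjugate filtration on derived de Rham cohomology of quasiregular semiperfect algebras) and descend, but I expect the gerbe approach to be shorter.
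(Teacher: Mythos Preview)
Your overall architecture matches the paper's proof exactly: pull back the gerbe $\nu_{X/k}$ along $s_{X,n}$, produce a section $\sigma_n$ to trivialize it, identify the pushforward of the resulting trivial $B(s_{X,n}^*T_{X^{(1)}})^{\sharp}$-gerbe, and invoke Lemma~\ref{witt intro: F flat} for flat base change. Steps two through four of your outline are precisely what the paper does.

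The one point where you are vague---and where your sketch likely goes astray---is the construction of $\sigma_n$. You propose to produce a $W_n(R)/p$-point of $(X/k)^{\dR}$ by manipulating the quasi-ideal $F_*W(W_n(R)/p)\xrightarrow{V}W(W_n(R)/p)$ and truncation maps. This would force you to work with Witt vectors of the non-reduced ring $W_n(R)/p$, which is awkward (the derived mod $p$ reduction acquires a $\pi_1$, and the relevant identities are not transparent). The paper sidesteps this entirely: it first proves (Proposition~\ref{dr stack: deltalift-splitting}) that any flat $p$-adic lift equipped with a Frobenius lift yields a section of $\nu$, via the $\delta$-ring adjunction $\tS\to W(\tS)\to W(S)$. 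Then it observes that $W(R)$ itself is such a lift of $W(R)/p$, with the Witt vector Frobenius as the Frobenius lift. This gives a section $\Spec W(R)/p\to (\Spec W(R)/p)^{\dR}$; composing with the map $(\Spec W(R)/p)^{\dR}\to X^{\dR}$ induced by $s_R$ and the inclusion of the affine open produces the desired $\sigma$ over $W(X)\times_{\bZ_p}\bF_p$ (Corollary~\ref{dr stack: section on W mod p}), and one simply restricts to $W_n(X)\times_{\bZ/p^n}\bF_p$. So the missing idea in your proposal is: rather than trying to build $\sigma_n$ directly over the truncated scheme, use the canonical Frobenius lift on $W(R)$ and the $\delta$-ring universal property.

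Your fallback (semiperfect descent, Section~\ref{qrsp}) only establishes the $n=1$ case, i.e.\ the decomposition after $F_{X/k}^*$, not after $s_{X,n}^*$; it would not by itself yield the theorem for $n>1$.
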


We will obtain this decomposition as a consequence of the fact that the morphism $\nu_X:X^{\dR}\to X^{(1)}$ acquires a section over $W_n(X)\times_{\bZ_p}\bF_p$. We first recall the following criterion for the morphism $\nu_X$ itself to have a section.

\begin{pr}\label{dr stack: deltalift-splitting}
Let $\tX$ be a flat $p$-adic formal scheme over $\Spf\bZ_p$ equipped with an endomorphism $\tF:\tX\to \tX$ whose restriction to the special fiber $X:=\tX\times_{\bZ_p}\bF_p$ is equal to the Frobenius endomorphism of $X$. Then the morphism $\nu_X:X^{\dR}\to X$ admits a natural section $s_{\tF}:X\to X^{\dR}$.
\end{pr}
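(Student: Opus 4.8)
The plan is to build the section directly at the level of functors of points, extracting it from the $\delta$-ring structure that the Frobenius lift $\tF$ puts on $\tX$. Since the de Rham stack is an \'etale sheaf (Lemma \ref{dr stack: etale descent}) and its formation commutes with open immersions (as in the proof of Proposition \ref{dr stack: cohomology}; for $U\hookrightarrow X$ open one has $U^{\dR}=\nu_X^{-1}(U)$, because $\Spec(W(R)/pW(R))$ and $\Spec R$ have the same underlying space), it suffices to produce a section that is natural in the test ring, over an affine open cover, and glue. So I would first reduce to $X=\Spec A$ with $\tX=\Spf\tA$ for a $p$-torsion-free $p$-complete $\bZ_p$-algebra $\tA$ satisfying $\tA/p=A$, and $\tF$ a ring endomorphism of $\tA$ lifting the Frobenius of $A$. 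This reduction is harmless: the absolute Frobenius is the identity on underlying topological spaces, hence so is $\tF$, so $\tF$ restricts to every member of an affine cover of $\tX$, and the constructions below are visibly compatible with such restrictions.

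Now the core step. As $\tA$ is $p$-torsion-free, $\tF$ defines a $\delta$-structure on $\tA$ via $\delta(a)=(\tF(a)-a^p)/p$. Since $p$-typical Witt vectors are right adjoint to the forgetful functor from $\delta$-rings to rings, the quotient map $\tA\twoheadrightarrow A$ corresponds to a $\delta$-ring homomorphism $g\colon\tA\to W(A)$ which is a section of the first Witt component $w_0\colon W(A)\to A$ and intertwines $\tF$ with the Witt-vector Frobenius; geometrically, $g$ is a morphism of formal schemes $\rho\colon W(X)\to\tX$ over $\Spf\bZ_p$ restricting to the identity on $X$. For a test $\bF_p$-algebra $R$ and a point $\varphi\colon A\to R$ of $X^{(1)}(R)=X(R)$, I would form $W(\varphi)\circ g\colon\tA\to W(R)$ and apply $-\otimes^L_{\bZ_p}\bF_p$: since $\tA$ is $\bZ_p$-flat, $\tA\otimes^L_{\bZ_p}\bF_p=A$, so this produces a map of animated $\bF_p$-algebras $A\to W(R)\otimes^L_{\bZ_p}\bF_p=\FWR$, i.e.\ an $R$-point $s_{\tF}(\varphi)$ of $X^{\dR}$. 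This is functorial in $R$ and in $\varphi$, hence defines a morphism $s_{\tF}\colon X\to X^{\dR}$; and post-composing $s_{\tF}(\varphi)$ with $F_*r_R$ amounts to reducing $w_0\circ W(\varphi)\circ g=\varphi\circ(\tA\twoheadrightarrow A)$ modulo $p$, which recovers $\varphi$, so $\nu_X\circ s_{\tF}=\id$. Gluing the affine-local sections over a cover of $X$ then gives the result.

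I expect the one genuinely substantive point to be the beginning of the second paragraph: recognizing that a lift of Frobenius on a $\bZ_p$-flat formal scheme is precisely a $\delta$-structure, and that the cofree property of $W$ turns it into the lift $\tA\to W(R)$ whose derived reduction modulo $p$ is the desired point of the de Rham stack. Everything else is routine bookkeeping: the passage to the derived quotient by $p$ is clean exactly because $\tX$ is $\bZ_p$-flat; naturality holds strictly before reducing mod $p$ and up to coherent homotopy afterwards, which is what a map of stacks requires; and the globalization needs nothing beyond the topological triviality of Frobenius together with the sheaf property and open-immersion-compatibility of $X^{\dR}$ recalled at the outset.
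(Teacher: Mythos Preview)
Your proposal is correct and follows essentially the same route as the paper: reduce to the affine case via the Zariski sheaf property of $X^{\dR}$, use the $\delta$-ring/Witt-vector adjunction to produce a ring map $\tA\to W(A)$, take its derived reduction modulo $p$ to obtain a point of $X^{\dR}$, and verify the section property via $w_0$. The only cosmetic differences are that the paper routes the adjunction through the unit $\tS\to W(\tS)$ and then applies $W(\tS)\to W(S)$ (which by naturality yields exactly your $g$), and that the paper gives the universal $S$-point of $(\Spec S)^{\dR}$ whereas you spell out its effect on each test ring $R$.
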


\begin{proof}
This is established in the course of the proof of \cite[Proposition 5.12]{bhatt-lurie-prismatization}, but we record the proof here for the sake of completeness. 

Since $X^{\dR}$ is a Zariski sheaf (Lemma \ref{dr stack: etale descent}), it suffices to produce a section on every affine Zariski open $U\hookrightarrow X$ provided that this construction is functorial, in the sense that it extends to a functor from the poset of affine opens $\{U\subset X\}$ to the slice category over $X^{\dR}$. For this it is enough to produce a section of $\nu_U:U^{\dR}\to U$ for every affine open $U\subset X$. Each $U$ is equipped with a unique flat formal lift over $\bZ_p$ together with a Frobenius lift, compatible with the given lift of $X$. Hence we may assume from now on that $X=\Spec S$ is affine.

Let $\tS$ be a flat $\bZ_p$-algebra equipped with an endomorphism $\tF_S:\tS\to \tS$ that lifts the Frobenius endomorphism on $S:=\tS/p$. There is a natural ring homomorphism $\tw_{\tF}:\tS\to W(\tS)$ whose composition with the projection $W(\tS)\to \tS$ on the $0$-th Witt coordinate is the identity map on $\tS$, by the adjunction between $W(-)$ and the forgetful functor from $\delta$-rings to all rings, cf. \cite[Th\'eor\`eme 4]{joyal}. In particular, we get a natural map of rings
\begin{equation}
   w_{\tF}: \tS\xrightarrow{\tw_{\tF}} W(\tS)\to W(S)
\end{equation} where the second map is induced by the mod $p$ reduction map $\tS\to S$. Applying the functor $-\otimes^L_{\bZ_p}\bF_p$ to the map $w_{\tF}$ we obtain a point of the groupoid $\Map_{\bF_p}(S, W(S)\otimes^L_{\bZ_p}\bF_p)=(\Spec S)^{\dR}(S)$ and we define the section $s_{\tF}:\Spec S\to (\Spec S)^{\dR}$ to be the map corresponding to that point.

This is indeed a section, because the composition $\nu_{\Spec S}\circ s_{\tF}$ is induced by the map $S\to S$ of (discrete) $\bF_p$-algebras given by the composition
\begin{equation}\label{dr stack: deltalift section composition}
\tS\otimes_{\bZ_p}\bF_p\xrightarrow{\tw_{\tF}}W(\tS)\otimes_{\bZ_p}^L\bF_p\to W(S)\otimes^{L}_{\bZ_p}\bF_p\xrightarrow{r_S}S
\end{equation}
The composition of the last two maps in (\ref{dr stack: deltalift section composition}) is equivalent to $W(\tS)\otimes^L_{\bZ_p}\bF_p\to \tS\otimes^L_{\bZ_p}\bF_p\simeq S$, the mod $p$ reduction of the projection onto $0$-th Witt coordinate. Hence the composition (\ref{dr stack: deltalift section composition}) is the identity map, because the composition $\tS\xrightarrow{\tw_{\tF}} W(\tS)\to \tS$ is the identity.
\end{proof}

\begin{rem}
Bhargav Bhatt and Vadim Vologodsky proved that for a smooth scheme $X$ over a perfect field $k$ the map $\nu_X:(X/k)^{\dR}\to X^{(1)}$ admits a section if and only if $X$ can be lifted over $W_2(k)$ together with its Frobenius endomorphism, but we will not need this stronger fact.
\end{rem}

For an $\bF_p$-scheme $X$ denote by $W(X)$ the ind-scheme obtained as the colimit $\colim\limits_n W_n(X)$ along closed embeddings $W_n(X)\hookrightarrow W_{n+1}(X)$ of finite length Witt schemes. Its base change $W(X)\times_{\bZ_p}\bF_p$ can be likewise described as the colimit $\colim\limits_n W_n(X)\times_{\bZ/p^n}\bF_p$. The maps $\cO_X\to W_n(\cO_X)/p$ of sheaves of rings on $X$ given on local sections by $r\mapsto [r^p]$ give rise to a map $s_X:W(X)\times_{\bZ_p}\bF_p\to X$ such that the composition $X\simeq W_1(X)\hookrightarrow W(X)\times_{\bZ_p}\bF_p\xrightarrow{s_X}X$ is the Frobenius endomorphism of $X$.

\begin{cor}\label{dr stack: section on W mod p}
For a reduced $\bF_p$-scheme $X$ there is a natural morphism $\sigma: W(X)\times_{\bZ_p}\bF_p\to X^{\dR}$ fitting into the commutative diagram
\begin{equation}
\begin{tikzcd}
& X^{\dR}\arrow[d, "\nu_X"] \\
W(X)\times_{\bZ_p}\bF_p\arrow[ru, "\sigma"]\arrow[r, "s_X"] & X.
\end{tikzcd}.
\end{equation}
\end{cor}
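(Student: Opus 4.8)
The plan is to reduce to the affine case via Zariski descent and then build the section $\sigma$ level by level on the finite-length Witt schemes $W_n(X)\times_{\bZ/p^n}\bF_p$, applying Proposition \ref{dr stack: deltalift-splitting} with $\widetilde{X}$ taken to be a suitable $p$-adic formal scheme. The naive idea is: for an affine $X=\Spec S$ and a fixed $n$, one would like to produce a flat $\bZ_p$-algebra $\widetilde{S}_n$ reducing to $W_n(S)/p\otimes$-something with a Frobenius lift, and feed it into Proposition \ref{dr stack: deltalift-splitting}. But one cannot apply that proposition directly to $W_n(X)\times_{\bZ/p^n}\bF_p$ because this scheme is not smooth (it is a thickening of $X$), and more importantly it is already an $\bF_p$-scheme rather than the $\bF_p$-reduction of a $p$-torsion-free $\bZ_p$-flat scheme with a compatible Frobenius lift. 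So the cleaner route is the one implicit in the earlier constructions: exploit that $X^{\dR}(R) = X(\FWR)$ and directly write down the map on $R$-points.

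First I would reduce to $X=\Spec S$ affine and reduced: since $X^{\dR}$ is a Zariski (indeed étale) sheaf by Lemma \ref{dr stack: etale descent}, and since $W(X)\times_{\bZ_p}\bF_p$ and the maps $s_X$, $\nu_X$ are all Zariski-local on $X$, a functorial construction over affine opens glues. For affine $X=\Spec S$, recall that $W(X)\times_{\bZ_p}\bF_p = \Spec(W(S)/p) = \colim_n \Spec(W_n(S)/p)$, using that $S$ is reduced so $W(S)$ is $p$-torsion-free and $W(S)/p = \cone(W(S)\xrightarrow{p}W(S))$ as a classical ring. So to give $\sigma: \Spec(W(S)/p)\to X^{\dR}$ amounts to giving, for every $W(S)/p$-algebra $R$, a point of $X^{\dR}(R) = \Map_{\bF_p}(S, \FWR)$, i.e.\ (for $R$ a discrete $\bF_p$-algebra) a ring map $S\to \FWR$; and this should be natural in $R$ and, composed with $F_*r_R: \FWR\to F_*R$, recover the structure map $S\to R$ coming from $\cO_X\to W(\cO_X)/p$, $s\mapsto[s^p]$, followed by $W(S)/p\to R$.

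The key construction is then the following: a $W(S)/p$-algebra $R$ comes with a ring map $W(S)/p\to R$. Applying the functor $W(-)/p$ would land in $W(R)/p$, and I want a map into $\FWR$. More precisely, I claim the composite $S \xrightarrow{s_S^{\cl}} F_*W(S)/p \to F_*W(R)/p$ — where the second arrow is induced by $W(S)/p\to R$ via functoriality of $W(-)/p$ applied to... no, wait: $W(S)/p\to R$ is not a map of the form required to induce $W(S)/p\to W(R)/p$ unless one uses the ghost/Teichmüller structure. The correct statement is that $s_R: R\to \FWR = F_*W(R)/p$ (for $R$ reduced; in general the animated cone) is defined for all $R$, and I should take $\sigma$ on $R$-points to be $S \to W(S)/p \to R \xrightarrow{s_R} \FWR$, i.e.\ $s_R$ composed with the structure map. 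Then $\nu_X\circ\sigma$ on $R$-points is $F_*r_R\circ s_R$ precomposed with $S\to R$, and by the defining property of $s_R$ recalled just before Lemma \ref{witt intro: F flat}, $F_*r_R\circ s_R$ is the Frobenius of $R$; chasing through, $\nu_X\circ\sigma$ corresponds to $S\to W(S)/p\to R$ followed by Frobenius, which is exactly $s_X$. Here I must be a little careful that $s_R$ is defined and natural for all animated $R$, which is precisely the content of the construction of $s_R$ via the quasi-ideal $F_*W(R)\xrightarrow{V}W(R)$ in the Witt vectors subsection, valid for arbitrary $\bF_p$-algebras.

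The main obstacle — and the part requiring genuine care rather than formal manipulation — is verifying that this recipe indeed lands in the \emph{de Rham stack of $X$ relative to $\bF_p$} with the right compatibilities, and in particular checking that the factorization $S\to W(S)/p\to R\xrightarrow{s_R}\FWR$ really is the colimit over $n$ of honest maps out of $W_n(S)/p$, so that $\sigma$ is defined on the ind-scheme $W(X)\times_{\bZ_p}\bF_p$ and not just on some completion. This is where one uses that each $W_n(S)/p$ is of finite presentation over (nothing, it's a quotient) and that the transition maps are the evident closed embeddings, so $\Map(S,\FWR)$ evaluated on a $W_n(S)/p$-algebra and the compatibility with restriction $W(S)/p\to W_n(S)/p$ work out. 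Since $\sigma$ is built out of the natural transformations $r$, $s$ which are already known to be compatible with the Witt-length filtration (this is exactly how $s_{X,n}$ was defined in \eqref{witt intro: truncated witt frobenius map}), I expect this to go through, but it is the step I would write out most carefully. Alternatively, one can phrase the whole argument by noting $\sigma$ is nothing but $\pi_X$ precomposed with $s_X$ after identifying $W(X)\times_{\bZ_p}\bF_p$ appropriately — but making that identification precise is essentially the same verification.
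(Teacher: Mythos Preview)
Your proposed $\sigma$ is precisely $\pi_X\circ s_X$, as you yourself note at the end, and this does \emph{not} make the required triangle commute. Since $\nu_X\circ\pi_X$ is the Frobenius $F_X$, you obtain $\nu_X\circ\sigma=F_X\circ s_X$, not $s_X$. Concretely: for $g\colon W(S)/p\to R$ your recipe sends $g$ to $s_R\circ g\circ s_S^{\cl}\colon S\to F_*W(R)/p$; postcomposing with $r_R$ gives $s\mapsto \varphi_R\bigl(g([s]^p)\bigr)$, which is one Frobenius \emph{more} than $s_X(g)\colon s\mapsto g([s]^p)$. The sentence ``$S\to W(S)/p\to R$ followed by Frobenius, which is exactly $s_X$'' is where the argument breaks---that extra Frobenius is not absorbed by anything. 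So your construction only exhibits a section of $\nu_X$ after pullback along $F_X\circ s_X$, which gives nothing beyond Proposition~\ref{dr stack: decomposition after frobenius} and is strictly weaker than what Theorem~\ref{qfsplit: main decomposition} needs.

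The fix is the idea you discarded too quickly. You rejected Proposition~\ref{dr stack: deltalift-splitting} because $W_n(X)\times_{\bZ/p^n}\bF_p$ has no evident flat $\bZ_p$-lift with Frobenius lift; but for each affine open $\Spec R\subset X$ the \emph{full} Witt ring $W(R)$ is $p$-torsion--free (since $R$ is reduced) and carries the canonical Witt vector Frobenius as a Frobenius lift. Applying Proposition~\ref{dr stack: deltalift-splitting} to $\tS=W(R)$ produces a genuine section $s_{F_{W(R)}}$ of $\nu_{\Spec W(R)/p}$---not merely $\pi$, which is only a section up to Frobenius---and one then composes with $(\iota\circ s_R)^{\dR}$ and glues. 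The honest section arises from the $\delta$-ring adjunction map $W(R)\to W(W(R))$, which has no analogue for your composite $s_R\circ g\circ s_S^{\cl}$.
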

\begin{proof}
Note that $W(X)$ is in general not a $p$-adic formal scheme because for an affine open $\Spec R\subset X$ the ring $W(R)$ is complete for the $V$-adic topology that is stronger than its $p$-adic topology. We make the following construction that lets us ignore the $V$-adic topology on the mod $p$ reduction of $W(R)$. Consider the Zariski sheaf of sets on the category of $\bF_p$-algebras obtained as the colimit \begin{equation}\bW(X)_{\bF_p}:=\colim\limits_{\Spec R\hookrightarrow X}\Spec W(R)/p\end{equation} taken over the category of all affine open subschemes of $X$. Each scheme $W_n(X)\times_{\bZ/p^n}\bF_p$, being the colimit (in Zariski sheaves) of $\Spec W_n(R)/p$ for affine opens $\Spec R\hookrightarrow X$, admits a natural map $W_n(X)\times_{\bZ/p^n}\bF_p\to \bW(X)_{\bF_p}$. These maps for varying $n$ give rise to a map $W(X)\times_{\bZ_p}\bF_p\to \bW(X)_{\bF_p}$ that can be interpreted as the map from the $V$-adic formal completion of the target.

For each affine open $\iota:\Spec R\hookrightarrow X$, consider the commutative diagram induced by the functoriality of the de Rham stack and the map $\nu_X$:
\begin{equation}
\begin{tikzcd}
(\Spec W(R)/p)^{\dR}\arrow[rr,"{(\iota\circ s_R)^{\dR}}"]\arrow[d, "{\nu_{W(R)/p}}"] & & X^{\dR}\arrow[d, "{\nu_X}"] \\
\Spec W(R)/p\arrow[r,"{s_R}"]\arrow[u, bend left=30,"{s_{F_{W(R)}}}"] & \Spec R\arrow[r,"\iota"] & X
\end{tikzcd}
\end{equation}

The map $\nu_{W(R)/p}$ has a section $s_{F_{W(R)}}$, natural in $R$, by Proposition \ref{dr stack: deltalift-splitting} applied to the $p$-adic formal lift $\Spf W(R)$ of $\Spec W(R)/p$ with the Frobenius lift given by the Witt vector Frobenius. The compositions $(\iota\circ s_R)^{\dR}\circ s_{F_{W(R)}}$ for varying $R$ then give rise to a map $\bW(X)_{\bF_p}=\colim\limits_{\Spec R\hookrightarrow X} \Spec W(R)/p\to X^{\dR}$ and its composition with the map $W(X)\times_{\bZ_p}\bF_p\to \bW(X)_{\bF_p}$ is our desired section $\sigma$.
\end{proof}
We can now deduce Theorem \ref{qfsplit: main decomposition} as in the proof of Proposition \ref{dr stack: decomposition after frobenius}.

\begin{proof}[Proof of Theorem \ref{qfsplit: main decomposition}]
Consider the pullback of the de Rham stack onto $W_n(X)_{\bF_p}:=W_n(X)\times_{\bZ_p}\bF_p$:
\begin{equation}
\begin{tikzcd}
X^{\dR}\times_{X} W_n(X)_{\bF_p}\arrow[d, "\nu'_X"]\arrow[r] & X^{\dR}\arrow[d, "\nu_X"] \\
W_n(X)_{\bF_p}\arrow[r,"s_{X,n}"] & X^{(1)}
\end{tikzcd}
\end{equation}
Corollary \ref{dr stack: section on W mod p} provides us with a section of the morphism $\nu_X'$. Since $\nu'_X$ is a gerbe for the group scheme $(s_{X,n}^*T_X)^{\sharp}$, a section provides a trivialization \begin{equation}X^{\dR}\times_{X} W_n(X)_{\bF_p}\simeq B_{W_n(X)_{\bF_p}}(s_{X,n}^*T_{X^{(1)}})^{\sharp}\end{equation} Hence $R\nu'_{X*}\cO$ is equivalent to $\bigoplus\limits_{i\geq 0}s_{X,n}^*\Omega^i_{X^{(1)}}[-i]$. 

By Lemma \ref{witt intro: F flat} the morphism $s_{X,n}:W_n(X)_{\bF_p}\to X^{(1)}$ is flat, and the base change along it (Lemma \ref{proof: gerbe base change}) gives an equivalence
\begin{equation}
R\nu'_{X*}\cO\simeq s_{X,n}^*R\nu_{X*}\cO_{X^{\dR}}\simeq s_{X,n}^*F_*\Omega^{\bullet}_{X}
\end{equation}
which gives rise to the decomposition $s_{X,n}^*F_*\Omega^{\bullet}_{X}\simeq \bigoplus\limits_{i\geq 0}s_{X,n}^*\Omega^i_{X^{(1)}}[-i]$. Since $s_{X,n}:W_n(X)_{\bF_p}\to X^{(1)}$ is an affine morphism with $s_{X,n*}\cO_{W_n(X)/p}\simeq F_{*}W_n(\cO_X)/p$, this is equivalent to the desired decomposition (\ref{qfsplit: main formula}).
\end{proof}

By construction, the above splittings for varying $n$ are compatible with restriction maps. Passing to the inverse limit over $n$, using that $F_{*}\Omega^{\bullet}_X$ is a bounded complex of coherent sheaves so tensoring with it commutes with inverse limits, we get:
\begin{cor}\label{qfsplit: limit decomposition}
For a smooth scheme $X$ over a perfect $\bF_p$-algebra $k$ there is a natural quasi-isomorphism
\begin{equation}\label{qfsplit: limit decomposition formula}
F_{*}W(\cO_X)/p\otimes_{\cO_{X^{(1)}}}F_{*}\Omega^{\bullet}_X\simeq \bigoplus\limits_{i\geq 0}F_{*}W(\cO_X)/p\otimes_{\cO_{X^{(1)}}}\Omega^i_{X^{(1)}}[-i]
\end{equation}
in the derived category of quasi-coherent $F_{*}W(\cO_X)/p$-modules on $X$.
\end{cor}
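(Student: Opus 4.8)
The plan is to deduce the statement from Theorem \ref{qfsplit: main decomposition} by passing to the inverse limit over $n$. The first task is to upgrade the family of level-$n$ decompositions to a \emph{coherent} tower. Revisiting the proof of Theorem \ref{qfsplit: main decomposition}, the decomposition at level $n$ is built by composing: the section of $\nu'_X$ over $W_n(X)\times_{\bZ_p}\bF_p$ provided by Corollary \ref{dr stack: section on W mod p}; the resulting trivialization of the $(s_{X,n}^*T_{X^{(1)}})^{\sharp}$-gerbe $X^{\dR}\times_X W_n(X)_{\bF_p}$ as a classifying stack; the identification $R\nu'_{X*}\cO\simeq\bigoplus_{i\ge 0}s_{X,n}^*\Omega^i_{X^{(1)}}[-i]$ coming from the cohomology of $BE^{\sharp}$; and flat base change along $s_{X,n}$ (Lemma \ref{proof: gerbe base change}), which identifies the left-hand side with $s_{X,n}^*F_*\Omega^{\bullet}_X$. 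I would then observe that each of these ingredients is natural in $n$: the section $\sigma$ of Corollary \ref{dr stack: section on W mod p} is defined in one stroke on $W(X)\times_{\bZ_p}\bF_p=\colim_n W_n(X)\times_{\bZ_p}\bF_p$, hence its restrictions are compatible with the closed immersions $W_n(X)\times_{\bZ_p}\bF_p\hookrightarrow W_{n+1}(X)\times_{\bZ_p}\bF_p$; moreover $s_{X,n}$ is the restriction of $s_{X,n+1}$ along this immersion, and the classifying-stack cohomology formula and Lemma \ref{proof: gerbe base change} are compatible with precomposing flat maps. Consequently the proof of Theorem \ref{qfsplit: main decomposition} yields an equivalence of $\bN$-indexed diagrams in $D(X^{(1)})$, the transition maps on both sides being induced by the restrictions $F_*W_{n+1}(\cO_X)/p\to F_*W_n(\cO_X)/p$.

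Next I would apply $\lim_n$ to this tower of equivalences. The terms of $F_*\Omega^{\bullet}_X$, as well as the sheaves $\Omega^i_{X^{(1)}}$, are finite locally free $\cO_{X^{(1)}}$-modules, hence dualizable, so the functors $(-)\otimes_{\cO_{X^{(1)}}}F_*\Omega^{\bullet}_X$ and $(-)\otimes_{\cO_{X^{(1)}}}\bigoplus_{i\ge 0}\Omega^i_{X^{(1)}}[-i]$ are right adjoints and preserve limits; here one uses that $F_*\Omega^{\bullet}_X$ is bounded and that $\Omega^i_{X^{(1)}}$ vanishes for $i>\dim X$, so the direct sum is finite. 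By the last line of Lemma \ref{witt intro: F flat}, $\lim_n F_*W_n(\cO_X)/p=F_*W(\cO_X)/p$. Hence the inverse limits of the two sides of the tower are precisely the two sides of the asserted quasi-isomorphism, and the tower of equivalences produces it, as complexes of $F_*W(\cO_X)/p$-modules. If one wishes, one can also record along the way — as in the last sentence of Proposition \ref{dr stack: decomposition after frobenius} — that the resulting equivalence induces the Frobenius pullback of the Cartier isomorphism on cohomology sheaves and is compatible with the level-$n$ decompositions of Theorem \ref{qfsplit: main decomposition} under $F_*W(\cO_X)/p\to F_*W_n(\cO_X)/p$.

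The step I expect to require the most care is the coherence in the first paragraph: a limit in a stable $\infty$-category is not governed by mere pairwise commutativity of squares in the homotopy category, so one genuinely needs that the section of the gerbe, its trivialization, the cohomology computation for $BE^{\sharp}$, and the base change of Lemma \ref{proof: gerbe base change} are all assembled by functorial constructions — that is, that the proof of Theorem \ref{qfsplit: main decomposition} provides an honest functor $\bN\to D(X^{(1)})$ rather than an unrelated choice at each level. Granting this, there is no $\lim^1$ obstruction to worry about, since the transition maps of the tower $\{F_*W_n(\cO_X)/p\}_n$ are surjective (Lemma \ref{witt intro: F flat}), so the derived and underived inverse limits agree and the displayed quasi-isomorphism is the literal one.
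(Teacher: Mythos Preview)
Your proposal is correct and follows the same approach as the paper: pass to the inverse limit over $n$ in the decompositions of Theorem \ref{qfsplit: main decomposition}, using that tensoring with the bounded complex $F_*\Omega^{\bullet}_X$ (of finite locally free sheaves) commutes with this limit. You are considerably more explicit than the paper about the $\infty$-categorical coherence required to form the tower---the paper dispatches this with the phrase ``by construction''---but your analysis of why the section $\sigma$, the gerbe trivialization, and the base-change identification are all functorial in $n$ is exactly what underlies that phrase.
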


We can now deduce the first main result of this paper by repeating the argument from Proposition \ref{dr stack: relative fsplit decomposition}.

\begin{cor}\label{qfsplit: qfsplit decomposable}
For a quasi-$F$-split smooth scheme $X$ over a perfect $\bF_p$-algebra $k$ there is a quasi-isomorphism $F_{*}\Omega^{\bullet}_{X}\simeq\bigoplus\limits_{i\geq 0}\Omega^i_{X^{(1)}}[-i]$.
\end{cor}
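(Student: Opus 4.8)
The plan is to imitate the proof of Proposition \ref{dr stack: relative fsplit decomposition}, feeding in the stronger decomposition of Theorem \ref{qfsplit: main decomposition} in place of the Frobenius-pushforward decomposition used there. First I would fix $n$ large enough that $X$ admits an $n$-quasi-$F$-splitting $\tau_n\colon F_*W_n(\cO_X)/p\to\cO_{X^{(1)}}$, i.e.\ an $\cO_{X^{(1)}}$-linear retraction of $s_{X,n}\colon\cO_{X^{(1)}}\to F_*W_n(\cO_X)/p$; such an $n$ exists by hypothesis, and enlarging $n$ is harmless since an $n$-quasi-$F$-splitting induces an $n'$-quasi-$F$-splitting for every $n'\ge n$. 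Theorem \ref{qfsplit: main decomposition} then supplies, for this $n$, an equivalence
\[
F_*W_n(\cO_X)/p\otimes_{\cO_{X^{(1)}}}F_*\Omega^{\bullet}_X\ \simeq\ \bigoplus_{i\ge 0}F_*W_n(\cO_X)/p\otimes_{\cO_{X^{(1)}}}\Omega^i_{X^{(1)}}[-i]
\]
in the derived category of quasi-coherent $F_*W_n(\cO_X)/p$-modules on $X^{(1)}$.

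Next, exactly as in the final sentence of Proposition \ref{dr stack: decomposition after frobenius}, I would normalize this equivalence --- by composing with a suitable automorphism of its right-hand side --- so that on the $i$-th cohomology sheaf it induces the base change along $s_{X,n}$ of the Cartier isomorphism $\cH^i(F_*\Omega^\bullet_X)\iso\Omega^i_{X^{(1)}}$; here one uses that $F_*W_n(\cO_X)/p$ is $\cO_{X^{(1)}}$-flat (Lemma \ref{witt intro: F flat}), so that $\cH^i$ of the left-hand side is $F_*W_n(\cO_X)/p\otimes_{\cO_{X^{(1)}}}\cH^i(F_*\Omega^\bullet_X)$. Then I would define the desired map as the composite
\[
F_*\Omega^\bullet_X\ \xrightarrow{\,s_{X,n}\otimes\id\,}\ F_*W_n(\cO_X)/p\otimes_{\cO_{X^{(1)}}}F_*\Omega^\bullet_X\ \simeq\ \bigoplus_{i\ge 0}F_*W_n(\cO_X)/p\otimes_{\cO_{X^{(1)}}}\Omega^i_{X^{(1)}}[-i]\ \xrightarrow{\,\tau_n\otimes\id\,}\ \bigoplus_{i\ge 0}\Omega^i_{X^{(1)}}[-i].
\]

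Finally I would check that this composite is a quasi-isomorphism by inspecting its effect on cohomology sheaves. Under the Cartier identification $\cH^i(F_*\Omega^\bullet_X)\cong\Omega^i_{X^{(1)}}$, the map $s_{X,n}\otimes\id$ sends a local section $\omega$ to $1\otimes\omega$ (because $s_{X,n}(1)=[1]=1$), the normalized middle equivalence fixes $1\otimes\omega$, and $\tau_n\otimes\id$ then sends $1\otimes\omega$ to $\tau_n(1)\,\omega=\omega$, since $\tau_n(1)=\tau_n(s_{X,n}(1))=1$ by the splitting property of $\tau_n$; thus the composite is the identity on every cohomology sheaf, hence an isomorphism. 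I expect no serious obstacle: essentially all the content lies in Theorem \ref{qfsplit: main decomposition}, and the one delicate point --- that its decomposition may be taken compatible with the Cartier isomorphism on cohomology --- is handled by the same automorphism-adjustment as in Proposition \ref{dr stack: decomposition after frobenius}, after which the argument is the purely formal ``sandwich'' of Proposition \ref{dr stack: relative fsplit decomposition}.
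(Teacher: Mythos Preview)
Your proposal is correct and follows essentially the same approach as the paper: fix an $n$-quasi-$F$-splitting $\tau_n$, normalize the equivalence of Theorem \ref{qfsplit: main decomposition} to be compatible with the Cartier isomorphism on cohomology, and sandwich it between $s_{X,n}\otimes\id$ and $\tau_n\otimes\id$. The only cosmetic difference is that the paper writes the composite in the opposite direction, producing a map $\bigoplus_i\Omega^i_{X^{(1)}}[-i]\to F_*\Omega^{\bullet}_X$ via the \emph{inverse} of the equivalence, but this is immaterial.
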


\begin{proof}
By definition, for some $n$ there exists an $\cO_{X^{(1)}}$-linear map $\tau_n: F_*W_n(\cO_X)/p\to \cO_X^{(1)}$ splitting the map $\cO_{X^{(1)}}\to F_*W_n(\cO_X)/p$.

We may and do assume that the $F_{*}W_n(\cO_X)/p$-linear equivalence (\ref{qfsplit: main formula}) induces on each cohomology sheaf the base change to $F_{*}W(\cO_X)/p$ of the Cartier isomorphism $\cH^i(F_{*}\Omega^{\bullet}_X)\simeq \Omega^i_{X^{(1)}}$. If this is not true on the nose, we can always replace the equivalence (\ref{qfsplit: main formula}) with its composition with an appropriate automorphism of the RHS to make this true.

The natural map $s_{X,n}\otimes\id:F_{*}\Omega^{\bullet}_{X}\to F_*W_n(\cO_X)/p\otimes_{\cO_{X^{(1)}}}F_{*}\Omega^{\bullet}_{X}$ in $D(X)$ admits a splitting $\tau_n\otimes\id: F_*W(\cO_X)/p\otimes_{\cO_{X^{(1)}}}F_{*}\Omega^{\bullet}_{X}\to F_{*}\Omega^{\bullet}_{X}$. Composing the natural map $\bigoplus\limits_{i\geq 0}\Omega^i_{X^{(1)}}[-i]\to \bigoplus\limits_{i\geq 0} F_*W_n(\cO_X)/p\otimes_{\cO_{X^{(1)}}}\Omega^i_{X^{(1)}}[-i]$ with the inverse of the equivalence (\ref{qfsplit: main formula}) and the map $\tau_n\otimes \id$ we get a map in $D(X^{(1)})$:
\begin{equation}\label{qfsplit: dec map}
\bigoplus\limits_{i\geq 0}\Omega^i_{X^{(1)}}[-i]\to F_{*}\Omega^{\bullet}_{X}
\end{equation}
By our assumption on the effect of (\ref{qfsplit: main formula}) on cohomology sheaves, (\ref{qfsplit: dec map}) induces the Cartier isomorphism on cohomology sheaves, and in particular is a quasi-isomorphism, as desired.
\end{proof}

\begin{rem}
By \cite[Th\'eor\`eme 3.5]{deligne-illusie} splittings of $\tau^{\leq 1}F_*\Omega^{\bullet}_k$ are in bijection with isomorphism classes of lifts of $X^{(1)}$ to $W_2(k)$, so the equivalence in Corollary \ref{qfsplit: qfsplit decomposable} gives rise to a lift over $W_2(k)$ for any smooth $k$-scheme equipped with a quasi-$F$-splitting. In \cite[7.2]{kttwyy} a direct construction of a $W_2(k)$-lift of any (not necessarily smooth) quasi-$F$-split scheme over a perfect field $k$ is given. We expect these two constructions to give the same lifts for smooth quasi-$F$-split $k$-schemes, but do not pursue this here.
\end{rem}

\begin{rem}
In view of the decomposition (\ref{qfsplit: limit decomposition formula}), the proof of Corollary \ref{qfsplit: qfsplit decomposable} would go through given only that the map $\cO_{X^{(1)}}\to F_*W(\cO_X)/p$ into the sheaf of infinite length Witt vectors modulo $p$ admits a section, which might appear to be a weaker property than being quasi-$F$-split. However, we will see below in Corollary \ref{continuity: qf splitting} that for a smooth finite type scheme $X$ over a perfect field $k$ a section of $\cO_{X^{(1)}}\to F_*W(\cO_X)/p$ automatically factors through an $n$-quasi-$F$-splitting for some $n\geq 1$.
\end{rem}

Let us mention explicitly the following classes of examples of $F$-split and quasi-$F$-split varieties whose de Rham complex is therefore decomposable. This answers some of the questions raised in \cite[7.11]{illusie-hodge}:

\begin{cor}\label{qfsplit: flag varieties}
For the following classes of smooth varieties over a perfect field $k$ the de Rham complex is decomposable:
\begin{enumerate}
\item Generalized flag varieties $G/P$, where $P$ is a smooth parabolic subgroup in a reductive group $G$. In particular, every smooth quadric hypersurface $Q\subset\bP^{n+1}_k$ in a projective space has decomposable de Rham complex.

\item Finite height Calabi-Yau varieties, i.e. smooth proper connected varieties $X$ with $\omega_X\simeq\cO_X$, $H^i(X,\cO_X)=0$ for $0<i<\dim X$, and $H^{\dim X}(X, W(\cO_X))\otimes\bQ\neq 0$.
\end{enumerate}

\end{cor}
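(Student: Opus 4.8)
The plan is to reduce the entire statement to Corollary \ref{qfsplit: qfsplit decomposable}: that corollary already produces the decomposition $F_*\Omega^\bullet_X\simeq\bigoplus\limits_{i\geq 0}\Omega^i_{X^{(1)}}[-i]$ for \emph{every} smooth quasi-$F$-split variety over a perfect field, so all that remains is to verify that the two listed classes consist of quasi-$F$-split varieties. Neither verification needs a genuinely new argument; the point is to recall the right facts and cite them correctly.

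For (1), I would invoke the theorem of Mehta--Ramanathan \cite{mehta-ramanathan} that the full flag variety $G/B$ of a reductive group $G$ over $k$ is Frobenius split. For a parabolic $P\supseteq B$ the projection $\pi\colon G/B\to G/P$ satisfies $\pi_*\cO_{G/B}=\cO_{G/P}$ and commutes with relative Frobenius pushforward, so $\pi_*$ carries a Frobenius splitting of $G/B$ to one of $G/P$; hence $G/P$ is $F$-split, in particular $1$-quasi-$F$-split, and Corollary \ref{qfsplit: qfsplit decomposable} applies. A smooth quadric $Q\subset\bP^{n+1}_k$ is a homogeneous space $G/P$ with $G$ an orthogonal or spin group and $P$ the stabilizer of a point of $Q$, which is a smooth parabolic, so it is covered by the previous case (and in low dimension, e.g. $Q\cong\bP^1\times\bP^1$ when $n=2$, one may also exhibit an $F$-splitting by hand).

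For (2), set $n=\dim X$. The vanishing $H^i(X,\cO_X)=0$ for $0<i<n$ guarantees, by the theory of Artin and Mazur, that the formal group functor $\Phi^n_X$ is pro-representable, while Serre duality together with $\omega_X\simeq\cO_X$ gives $\dim_k H^n(X,\cO_X)=1$, so $\Phi^n_X$ is a one-dimensional commutative formal group over $k$. Its height is finite exactly when $H^n(X,W(\cO_X))$ is a free $W(k)$-module of finite rank, equivalently when $H^n(X,W(\cO_X))\otimes\bQ\neq 0$ (in the infinite-height case $H^n(X,W(\cO_X))$ is a torsion $W(k)$-module, hence killed by $\otimes\bQ$), which is precisely our hypothesis. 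Yobuko's height comparison \cite{yobuko} then identifies the quasi-$F$-split height of $X$ with the height of $\Phi^n_X$; hence $X$ is quasi-$F$-split, and Corollary \ref{qfsplit: qfsplit decomposable} again applies.

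The only steps deserving care here are bookkeeping rather than mathematics: in (1), that the parabolic occurring for a smooth quadric is genuinely smooth — this is the subtlety the hypothesis ``$P$ smooth'' is there to finesse, and it is where characteristic $2$ requires attention; and in (2), the precise form of Yobuko's comparison theorem and of the Artin--Mazur pro-representability criterion, together with the standard dictionary between finiteness of the Artin--Mazur height and non-vanishing of $H^n(X,W(\cO_X))\otimes\bQ$. I expect no real obstacle beyond getting these citations and that dictionary stated correctly.
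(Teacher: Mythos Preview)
Your reduction to Corollary \ref{qfsplit: qfsplit decomposable} via the quasi-$F$-split property, citing Mehta--Ramanathan for (1) and Yobuko for (2), is exactly the paper's approach. The one place where the paper does more than you is the quadric case: rather than leaving the characteristic $2$ verification that $Q$ is a $G/P$ with $P$ smooth as a caveat, the paper supplies an explicit Frobenius splitting of any smooth quadric by writing down a monomial-times-$f^{p-1}$ polynomial (via \cite[Exercise 1.3.E]{brion-kumar}), treating $p$ odd and $p=2$ separately with the normal forms from \cite[Exp.\ XII, Prop.\ 1.2]{sga7}. This buys an unconditional proof without having to chase the orthogonal-group story in characteristic $2$, which is precisely the loose end you flagged.
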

\begin{proof}
Generalized flag varieties are Frobenius-split by \cite[Theorem 2]{mehta-ramanathan}, which implies the first assertion. The assertion about smooth quadrics now follows, because they are examples of generalized flag varieties for orthogonal groups $SO(n)$.

Alternatively, we can give a direct construction of a Frobenius splitting of a smooth quadric $Q\subset\bP^n_k$. By \cite[Excercies 1.3.E (1)+(3)]{brion-kumar} a homogeneous polynomial $\sigma\in k[x_0,\ldots,x_n]$ of degree $(p-1)(n+1)$ defines a splitting of a hypersurface in $\bP^n$ cut out by equation $\{f=0\}$ if $f^{p-1}$ divides $\sigma$, and $(x_0x_1\ldots x_n)^{p-1}$ appears in $f^{p-1}$ with a non-zero coefficient.

Since the property of being Frobenius split can be checked after an extension of the base perfect field, we may assume that $k$ is algebraically closed. If $p$ is odd, then $Q$ is isomorphic to the quadric cut out by the equation $f=x_0^2+\ldots+x_n^2$. The coefficient of $(x_0x_1)^{p-1}$ in $f^{p-1}$ is $\binom{p-1}{({p-1})/{2}}\neq 0$, so we can take $\sigma=f^{p-1}\cdot (x_2\ldots x_n)^{p-1}$ as a degree $(n+1)(p-1)$ polynomial defining a splitting of $Q$. For $p=2$ the quadric $Q$ is isomorphic (\hspace{1sp}\cite[Expose XII, Proposition 1.2]{sga7}) to one of the following two: $f=x_0x_1+x_2x_3+\ldots+x_{n-1}x_n$ or $f=x_0^2+x_1x_2+x_3x_4+\ldots+x_{n-1}x_n$, depending on the parity of $n$. In either of the cases $\sigma=f\cdot (x_0x_1\ldots x_{n-2})$ defines a splitting.

Finally, the fact that finite height Calabi-Yau varieties are quasi-$F$-split was proven by Yobuko \cite[Theorem 4.5]{yobuko}.
\end{proof}

\begin{cor}\label{qfsplit: htdgr akn}
Let $X$ be a smooth proper quasi-$F$-split variety over a perfect field $k$, equidimensional of dimension $d$.
\begin{enumerate}
\item Hodge-to-de Rham spectral sequence $E_1^{i,j}=H^j(X,\Omega^i_{X/k})\Rightarrow H^{i+j}_{\dR}(X/k)$ degenerates at the first page.
\item For an ample line bundle $L$ on $X$, the cohomology group $H^i(X,\Omega^j_{X/k}\otimes L)$ vanishes for $i+j>d$.
\end{enumerate}
\end{cor}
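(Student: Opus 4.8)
The plan is to deduce both statements from the decomposition of the de Rham complex established in Corollary \ref{qfsplit: qfsplit decomposable}, by repeating verbatim the classical arguments of Deligne--Illusie \cite[Corollaire 2.10, Corollaire 2.11]{deligne-illusie}; the only new input is that here the decomposition holds in arbitrary dimension, so no hypothesis $\dim X\leq p$ is needed.

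For part (1), since $X$ is smooth proper over the perfect field $k$, the hypercohomology spectral sequence of the stupid (Hodge) filtration on $F_{X/k*}\Omega^{\bullet}_{X/k}$ is exactly the Hodge-to-de Rham spectral sequence $E_1^{i,j}=H^j(X,\Omega^i_{X/k})\Rightarrow H^{i+j}_{\dR}(X/k)$. By Corollary \ref{qfsplit: qfsplit decomposable} we have a quasi-isomorphism $F_{X/k*}\Omega^{\bullet}_{X/k}\simeq\bigoplus_{i\geq 0}\Omega^i_{X^{(1)}/k}[-i]$ in $D(X^{(1)})$, so \[ H^{n}_{\dR}(X/k)=\bH^{n}(X^{(1)},F_{X/k*}\Omega^{\bullet}_{X/k})\cong\bigoplus_{i+j=n}H^j(X^{(1)},\Omega^i_{X^{(1)}/k}). \] Since $k$ is perfect, $X^{(1)}$ is obtained from $X$ by base change along the Frobenius automorphism of $k$, so $\dim_k H^j(X^{(1)},\Omega^i_{X^{(1)}/k})=\dim_k H^j(X,\Omega^i_{X/k})$, and hence $\dim_k H^{n}_{\dR}(X/k)=\sum_{i+j=n}\dim_k H^j(X,\Omega^i_{X/k})=\sum_{i+j=n}\dim_k E_1^{i,j}$. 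A spectral sequence of finite-dimensional vector spaces with $\dim E_\infty=\dim E_1$ in each total degree must have all differentials vanish, i.e. it degenerates at $E_1$.

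For part (2), let $L$ be an ample line bundle on $X$. Tensoring the decomposition of Corollary \ref{qfsplit: qfsplit decomposable} with the line bundle $F_{X/k}^*L'$, where $L'$ is the line bundle on $X^{(1)}$ with $F_{X/k}^*L'\cong L^{\otimes p}$ (using $\Pic$ and the relative Frobenius), and then applying $F_{X/k*}$ together with the projection formula, one obtains a quasi-isomorphism $F_{X/k*}(\Omega^{\bullet}_{X/k}\otimes L^{\otimes p})\simeq\bigoplus_{i}(\Omega^i_{X^{(1)}/k}\otimes L')[-i]$ — this is the standard trick in \cite[Corollaire 2.11]{deligne-illusie}. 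Comparing hypercohomology in degree $n>d$: the left side computes a subquotient of $\bH^{n}(X,\Omega^{\bullet}_{X/k}\otimes L^{\otimes p})$, which relates via the hypercohomology spectral sequence to the groups $H^{j}(X,\Omega^{i}_{X/k}\otimes L^{\otimes p})$, while the right side is $\bigoplus_{i+j=n}H^{j}(X^{(1)},\Omega^{i}_{X^{(1)}/k}\otimes L')$. Running the Serre-vanishing/ample induction exactly as in loc.\ cit.: for $i+j>d$ one shows $H^j(X,\Omega^i_{X/k}\otimes L^{\otimes m})=0$ for $m$ divisible by a large power of $p$ (by Serre vanishing, since $L$ is ample, these groups vanish for $m\gg0$), and then descends the conclusion to $m=1$ by a downward induction on $m$ that plays the splitting for $L^{\otimes pm}$ against that for $L^{\otimes m}$; the ampleness of $L$ and the decomposition force each relevant cohomology group to inject into a vanishing one. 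Concretely this is identical to the Deligne--Illusie argument with their $\dim X\leq p$ replaced everywhere by our dimension-free decomposition.

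The main obstacle is essentially bookkeeping rather than a conceptual difficulty: one must be careful that the decomposition of Corollary \ref{qfsplit: qfsplit decomposable} is a statement of complexes of $\cO_{X^{(1)}}$-modules (not merely after $F_{X/k}^*$), so that hypercohomology on $X^{(1)}$ may be computed termwise, and one must correctly track the twist by $L'$ versus $L$ under relative Frobenius, including the point that for a smooth proper variety every line bundle on $X^{(1)}$ is of the required form up to the $p$-th power pullback. Once these identifications are in place, both (1) and (2) follow by the word-for-word transcription of \cite[Corollaires 2.10 and 2.11]{deligne-illusie}, so I would keep the write-up short and simply cite those arguments.
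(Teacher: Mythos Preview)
Your proposal is correct and follows essentially the same approach as the paper's proof, which simply cites \cite[Corollaire 4.14]{deligne-illusie} for (1) and the argument of \cite[Corollaire 2.8, Lemme 2.9]{deligne-illusie} for (2). Two minor corrections: the references ``Corollaires 2.10 and 2.11'' do not exist in \cite{deligne-illusie} (the relevant statements are Corollaires 2.4 and 2.8 together with Lemme 2.9), and in your projection-formula step you should tensor the decomposition on $X^{(1)}$ directly with $L':=W_{X/k}^*L$ and then apply the projection formula to identify the left side with $F_{X/k*}(\Omega^{\bullet}_{X/k}\otimes L^{\otimes p})$, rather than tensoring with $F_{X/k}^*L'$ and pushing forward.
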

\begin{proof}
(1) follows from Corollary \ref{qfsplit: qfsplit decomposable} by \cite[Corollaire 4.14]{deligne-illusie}. (2) follows by the argument from the proof of \cite[Corollaire 2.8]{deligne-illusie}: by Serre's vanishing the groups $H^i(X,\Omega^j\otimes L^N)$ are zero for large enough $N$ and $i>0$, hence Lemma 2.9 of loc. cit. implies that $H^i(X,\Omega^j\otimes L)=0$, which is equivalent to (2) by Serre duality.
\end{proof}

\begin{rem}
\begin{enumerate}
\item For $d\leq p$ Corollary \ref{qfsplit: htdgr akn} follows directly from the results \cite[Corollaire 2.4, Corollaire 2.8]{deligne-illusie} of Deligne-Illusie-Raynaud, by the result of Yobuko that a quasi-$F$-split variety lifts over $W_2(k)$. The case $d=p+1$ for an $F$-split variety has also been proven previously by Achinger-Suh \cite[Theorems A.2, A.4]{achinger-suh}.
\item Kodaira vanishing for quasi-$F$-split varieties of arbitrary dimension (the case $j=d$ in (2) above) has been proven in \cite[Theorem 1.7]{nakkajima-yobuko}, and for $F$-split varieties it was proven in \cite{mehta-ramanathan}. The above result in full generality appears to be new even for $F$-split varieties.
\end{enumerate}
\end{rem}

\section{Frobenius splitting of classifying stacks}\label{stacks}

In this section we generalize the decomposition result from Proposition \ref{dr stack: decomposition after frobenius} to smooth Artin stacks over a perfect field and observe that the classifying stack of a reductive group is Frobenius split, which leads to the main result of this section:

\begin{thm}\label{stacks: main htdr bg} For a reductive group $G$ over a perfect field $k$ of characteristic $p$, Hodge-to-de Rham spectral sequence $E_{1}^{i,j}=H^j(BG, \Lambda^i L_{BG/k})\Rightarrow H^{i+j}_{\dR}(BG/k)$ for the classifying stack of $G$ degenerates at the first page.
\end{thm}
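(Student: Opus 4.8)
The plan is to reduce Theorem~\ref{stacks: main htdr bg} to a decomposition of the de Rham complex of $BG$ in the derived category of $BG^{(1)}$, following the pattern by which Proposition~\ref{dr stack: relative fsplit decomposition} was deduced from Proposition~\ref{dr stack: decomposition after frobenius}, and Corollary~\ref{qfsplit: qfsplit decomposable} from Theorem~\ref{qfsplit: main decomposition}, in the scheme case: one combines the stacky Frobenius-pullback decomposition of Theorem~\ref{stacks: frobenius decomposition} with the Frobenius splitting of $BG$ furnished by Proposition~\ref{stacks: bg is fsplit}. Write $F=F_{BG/k}\colon BG\to BG^{(1)}$ for the relative Frobenius and let $\cE\in D(BG^{(1)})$ denote the de Rham complex of $BG$, equipped with its conjugate filtration --- an exhaustive, bounded below increasing filtration with $\gr^{\conj}_i\cE\simeq\Lambda^i L_{BG^{(1)}/k}[-i]$; this filtration and the stacky Cartier isomorphism identifying its graded pieces are obtained from the scheme material of Section~\ref{dr stack} by smooth descent, exactly as in the proof of Theorem~\ref{stacks: frobenius decomposition}.

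First I would record what Theorem~\ref{stacks: frobenius decomposition} supplies: an equivalence $F^{*}\cE\simeq\bigoplus_{i\ge0}F^{*}\Lambda^i L_{BG^{(1)}/k}[-i]$ in $D(BG)$, compatible with $F^{*}$ of the conjugate filtration on the source and the stupid filtration on the target, and inducing the Frobenius pullback of the Cartier isomorphism on graded pieces; composing with a suitable automorphism of the target, we may assume it induces the identity on graded pieces after these Cartier identifications. Applying $RF_{*}$ together with the projection formula $RF_{*}F^{*}(-)\simeq RF_{*}\cO_{BG}\otimes^{L}_{\cO_{BG^{(1)}}}(-)$ converts this into a filtered equivalence
\begin{equation*}
RF_{*}\cO_{BG}\otimes^{L}_{\cO_{BG^{(1)}}}\cE\;\simeq\;\bigoplus_{i\ge0}\bigl(RF_{*}\cO_{BG}\otimes^{L}_{\cO_{BG^{(1)}}}\Lambda^i L_{BG^{(1)}/k}\bigr)[-i].
\end{equation*}
Now precompose with $F^{\#}_{BG}\otimes\id_{\cE}\colon\cE\to RF_{*}\cO_{BG}\otimes^{L}\cE$ and postcompose with $\tau\otimes\id$, where $\tau\colon RF_{*}\cO_{BG}\to\cO_{BG^{(1)}}$ is the splitting of $F^{\#}_{BG}$ from Proposition~\ref{stacks: bg is fsplit}. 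The resulting map $\cE\to\bigoplus_{i\ge0}\Lambda^i L_{BG^{(1)}/k}[-i]$ is a map of conjugate-filtered objects whose effect on the $i$-th graded piece is $(\tau\circ F^{\#}_{BG})\otimes\id=\id$; since both filtrations are exhaustive and bounded below, it is therefore a filtered equivalence, yielding a decomposition $\cE\simeq\bigoplus_{i\ge0}\Lambda^i L_{BG^{(1)}/k}[-i]$ in $D(BG^{(1)})$.

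It then remains to pass to cohomology. Applying $R\Gamma(BG^{(1)},-)$ gives $\RGdR(BG/k)\simeq\bigoplus_{i\ge0}R\Gamma(BG^{(1)},\Lambda^i L_{BG^{(1)}/k})[-i]$, hence $H^{n}_{\dR}(BG/k)\cong\bigoplus_{i+j=n}H^{j}(BG^{(1)},\Lambda^i L_{BG^{(1)}/k})$ for every $n$. Because $k$ is perfect, $BG^{(1)}$ is the base change of $BG$ along the automorphism $\varphi_k$ of $k$, so $\dim_k H^{j}(BG^{(1)},\Lambda^i L_{BG^{(1)}/k})=\dim_k H^{j}(BG,\Lambda^i L_{BG/k})$; each of these is finite, since $\Lambda^i L_{BG/k}$ is a perfect complex of $G$-representations (concentrated in degree $i$, so that $H^j$ vanishes for $j<i$ and only finitely many terms contribute to a given diagonal $i+j=n$) and the cohomology of a reductive group with finite-dimensional coefficients is finite-dimensional in every degree. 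Consequently $\dim_k H^{n}_{\dR}(BG/k)=\sum_{i+j=n}\dim_k H^{j}(BG,\Lambda^i L_{BG/k})$ for all $n$, and since the total dimension of each diagonal of the $E_1$-page of the Hodge-to-de Rham spectral sequence always dominates that of the abutment, with equality in all diagonals precisely when the sequence degenerates, this gives degeneration at $E_1$.

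The step I expect to demand the most care is the filtered-equivalence argument of the second paragraph. Unlike in the scheme situation of Proposition~\ref{dr stack: relative fsplit decomposition}, neither $\cE$ nor $RF_{*}\cO_{BG}$ is bounded, and the conjugate filtration on $\cE$ has infinitely many nonzero graded pieces --- the derived exterior powers $\Lambda^i L_{BG^{(1)}/k}$ are, up to shift, divided powers of $(\mathfrak g^{(1)})^{\vee}$, hence nonzero for all $i$ --- so one cannot conclude on cohomology sheaves as in loc.\ cit., and must instead verify that the conjugate filtrations in play are exhaustive and bounded below, that $F^{*}$, $RF_{*}$, and the tensor products above are compatible with them, and that the equivalence of Theorem~\ref{stacks: frobenius decomposition} is filtered and Cartier-compatible in the precise sense used above. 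The two substantive inputs, Theorem~\ref{stacks: frobenius decomposition} (which itself rests on smooth descent for $F^{*}\Omega^i$ of smooth schemes) and the Frobenius splitting of $BG$ (Proposition~\ref{stacks: bg is fsplit}), are used here as established results.
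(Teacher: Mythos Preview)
Your proof follows essentially the same route as the paper's: apply the projection formula to convert the Frobenius-pullback decomposition of Theorem~\ref{stacks: frobenius decomposition} into an equivalence after tensoring with $RF_{BG*}\cO_{BG}$, use the Frobenius splitting of $BG$ from Proposition~\ref{stacks: bg is fsplit} to split off the de Rham complex, and conclude by a dimension count using finite-dimensionality of Hodge cohomology of $BG$; your filtered-equivalence argument is in fact a bit more careful than the paper's invocation of ``as in Corollary~\ref{qfsplit: qfsplit decomposable}'', which is appropriate since the de Rham complex here is unbounded. The one substantive point the paper addresses that you take on faith is the projection formula $RF_{BG*}F_{BG}^*(-)\simeq RF_{BG*}\cO_{BG}\otimes^L(-)$ itself, which is not automatic for $F_{BG}$ (neither flat nor affine in the stacky setting): the paper checks it directly by identifying $RF_{BG*}$ with $\RGamma(G_1,-)$ and using that $k[G_1]$ is finite-dimensional over $k$, so the standard bar complex computing $G_1$-cohomology commutes with arbitrary colimits in the coefficient.
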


Here $\Lambda^i L_{BG/k}$ is the $i$th exterior power of the cotangent complex of $BG$, isomorphic to $S^{i}(\fg^{*})[-i]$, the $i$-th symmetric power of the dual to the adjoint representation of $G$, placed in the cohomological degree $i$. 

\begin{rem}
\begin{enumerate}
\item The Hodge cohomology groups $H^j(BG, \Lambda^i L_{BG/k})$ of $BG$ can be identified with the cohomology $H^{j-i}(G, S^j(\mathfrak{g}^*))$ of the group $G$ with coefficients in the symmetric powers of the dual to the adjoint representation, cf. \cite[Corollary 2.2]{totaro}.
\item The fact that there are no non-zero differentials coming out of terms $E_r^{i,j}$ in the range $i+j<p$ was proven in \cite[Corollary 1.3.24]{kubrak-prikhodko-modp} by a generalization to stacks of the method of Deligne-Illusie. Our proof also proceeds by proving first that the conjugate spectral sequence degenerates for $BG$.
\item We do not know if Theorem \ref{stacks: main htdr bg} remains true over an arbitrary base ring $k$. Degeneration trivially holds when $k$ is a $\bQ$-algebra because in this case the entries $E_1^{i,j}$ of the spectral sequence are zero for $i\neq j$. The case of $k=\bZ/p^n$ for $n\geq 2$ seems to require a new idea.
\item We find it somewhat curious that replacing a reductive group $G$ by a quotient $G'=G/\Gamma$ by a central finite subgroup scheme $\Gamma\subset Z(G)$ of multiplicative type (e.g. $G=GL_n$ and $G'=PGL_n$) might make the Hodge cohomology of the classifying stack much more complicated but the Frobenius-splitting of $BG'$ easily follows (Lemma \ref{stacks: isogeny invariant}) from that of $BG$, so our proof of Theorem \ref{intro: classifying stack} for $G'$ is no more harder than it is for $G$.
\end{enumerate}
\end{rem}

We refer the reader to \cite[1.1]{kubrak-prikhodko-modp} or \cite[Construction 2.7]{antieau-bhatt-mathew} for the definition and basic properties of Hodge and de Rham cohomology of smooth stacks. Following the same principle as in these references, we introduce the de Rham complex of a smooth stack $X$ in characteristic $p$, viewed as an object of the derived category of quasi-coherent sheaves on $X^{(1)}$, by formally extending it from the corresponding invariant for smooth schemes.

For the duration of this section, $k$ is a perfect $\bF_p$-algebra. The definition of the de Rham complex for a stack will be a special case of the following construction that, given a functorial way of assigning to every smooth $k$-scheme $S$ an object $\cF_S\in D(S)$, extends it to arbitrary smooth Artin stacks. To simplify the exposition, we restrict ourselves to stacks with affine diagonal.

\renewcommand{\cP}{\Sm}
\begin{constr}
Following \cite[020S]{kerodon}, we denote by $\QC_{\obj}$ the $\infty$-category of $\infty$-categories equipped with a distinguished object. Its objects are pairs $(\cC,C)$ where $\cC$ is an $\infty$-category, and $C\in \cC$ is an object, and $1$-morphisms from $(\cC,C)$ to $(\cD, D)$ are pairs $F:\cC\to \cD, \alpha:F(C)\to D$ where $F$ is a functor and $\alpha$ is any $1$-morphism in $\cD$, which does not have to be invertible. Likewise, $\QC$ denotes the $\infty$-category of $\infty$-categories \cite[0208]{kerodon}.

Let $\cP$ be the category of smooth affine schemes over $k$. Denote by $\St_{\cP}$ the category of smooth Artin stacks over $k$ with affine diagonal, in the sense of \cite[026O]{stacks}. 

Suppose we are given a functor $\cF:\cP^{\op}\to \QC_{\obj}$ such that the underlying functor $\cP^{\op}\xrightarrow{\cF} \QC_{\obj}\to \QC$ is the functor $D(-)$, sending $S\in \Sm$ to the derived category of quasi-coherent sheaves on $S$, and a morphism $f:S\to S'$ to the pullback functor $f^*:D(S')\to D(S)$. For an affine scheme $S\in\cP$ we denote the distinguished object of $D(S)$ provided by $\cF$ by $\cF_S\in D(S)$. Let $\tcF:\St_{\cP}^{\op}\to \QC_{\obj}$ be the left Kan extension of $\cF$ along $\cP^{\op}\hookrightarrow \St_{\cP}^{\op}$. Explicitly, for a stack $X\in \St_{\cP}^{\op}$ we have $\tcF(X)=(D(X),\tcF_X)$ where the object $\tcF_X\in D(X)$ is given by 
\begin{equation}
\tcF_X=\lim\limits_{f:S\to X}f_*\cF_S
\end{equation}
where the limit is taken over all smooth affine schemes mapping to $X$. By the assumption that $X$ has affine diagonal, each morphism $f$ is necessarily affine.
\end{constr}

\begin{lm}\label{stacks: descent of kan extension}
If the functor $\cF:\cP^{\op}\to \QC_{\obj}$ satisfies smooth descent, then so does the extended functor $\tcF$. 
\end{lm}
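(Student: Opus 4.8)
The plan is to show that the extended functor $\tcF$ sends a smooth cover to a limit diagram by reducing, via the definition of $\tcF$ as a right Kan extension along affine morphisms, to the descent property we already assumed for $\cF$ on smooth affine schemes. Concretely, let $Y \to X$ be a smooth surjection of stacks in $\St_{\cP}$, with \v{C}ech nerve $Y^{\bullet}$; I want to prove that the natural map $\tcF(X) \to \lim_{\Delta} \tcF(Y^{\bullet})$ is an equivalence in $\QC_{\obj}$. Since the underlying functor to $\QC$ is $D(-)$, which satisfies smooth (indeed fpqc) descent for quasi-coherent sheaves on Artin stacks with affine diagonal, the statement on underlying $\infty$-categories is automatic; the only content is that the distinguished objects match, i.e. that the natural map $\tcF_X \to \lim_{\Delta} p^{\bullet}_* \tcF_{Y^{\bullet}}$ is an equivalence in $D(X)$, where $p^{\bullet}: Y^{\bullet} \to X$ are the structure maps.

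First I would unwind both sides using the defining formula $\tcF_X = \lim_{f: S\to X} f_* \cF_S$, the limit running over the slice category $(\Sm_{/X})$ of smooth affine schemes over $X$. For the right-hand side, each $\tcF_{Y^n}$ is likewise a limit over $(\Sm_{/Y^n})$, and pushforward along the affine morphism $Y^n \to X$ commutes with this limit (all transition maps being affine, so $p^n_*$ is exact and preserves the relevant limits). The key combinatorial point is then a cofinality statement: the functor $\coprod_n (\Sm_{/Y^n}) \to (\Sm_{/X})$, sending a smooth affine $T \to Y^n$ to the composite $T \to X$, together with the simplicial structure, exhibits $(\Sm_{/X})$ as the colimit (equivalently, realizes the limit over $(\Sm_{/X})$ as the totalization of the limits over the $(\Sm_{/Y^n})$). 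This holds because any smooth affine $S \to X$ can be covered, smooth-locally on $S$, by affines mapping through $Y$ — using that $Y \to X$ is a smooth surjection and $S$ is affine, hence such a refinement exists — so the relevant comma categories are contractible and Quillen's Theorem A (its $\infty$-categorical form) applies.

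The step I expect to be the main obstacle is making this cofinality argument precise at the level of $\infty$-categories, in particular checking that the simplicial diagram $n \mapsto (\Sm_{/Y^n})$ has the correct colimit — this is essentially the assertion that $X$ is the \v{C}ech-descent colimit of $Y^\bullet$ \emph{after} passing to slice categories of smooth affines, which requires knowing that smooth affine schemes form a basis for the smooth topology on $X$ in a suitably functorial, $\infty$-categorical sense. One clean way to package this: the functor $D(-): \St_{\cP}^{\op} \to \QC$ is itself the right Kan extension of its restriction to $\Sm$ (this is standard for quasi-coherent sheaves on stacks with affine diagonal), so right Kan extension along $\Sm^{\op} \hookrightarrow \St_{\cP}^{\op}$ automatically takes descent diagrams to limit diagrams; the functor $\tcF$ is defined by the same Kan extension procedure, just valued in $\QC_{\obj}$ rather than $\QC$, and the forgetful functor $\QC_{\obj} \to \QC$ detects limits, so it suffices to observe that the distinguished-object component is computed by the same limit formula and hence inherits the descent property formally. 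Assembling these observations — and checking that "detects limits" is being used correctly, since $\QC_{\obj} \to \QC$ is a cartesian fibration whose fibers are $\infty$-categories of objects — is the part that needs care, but it is formal once the setup is in place. Finally I would note that smooth descent for $\tcF$ on all of $\St_{\cP}$ follows from descent along the canonical smooth cover by affines of any stack in $\St_{\cP}$, together with the \v{C}ech-nerve criterion just established.
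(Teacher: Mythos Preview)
Your overall strategy is sound---reduce to showing that the map of distinguished objects $\tcF_X \to \lim_{\Delta} p^{\bullet}_* \tcF_{Y^{\bullet}}$ is an equivalence in $D(X)$, then argue by cofinality and Quillen's Theorem~A---but the specific cofinality you propose does not go through as written, and the hypothesis that $\cF$ satisfies smooth descent never enters your argument in an essential way. You want the composition functor from the Grothendieck construction of $[n] \mapsto (\Sm_{/Y^n})$ to $(\Sm_{/X})$ to be cofinal; by Theorem~A this requires, for each smooth affine $S \to X$, that the category of diagrams $S \to T \to Y^n$ over $X$ (with $T$ smooth affine) be weakly contractible. Knowing that $S$ admits a smooth-local cover by affines factoring through $Y$ gives nonemptiness of this category, not contractibility. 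Your alternative route via ``$\QC_{\obj} \to \QC$ detects limits'' does not work either: that forgetful functor is a cocartesian fibration, and a cone upstairs is a limit precisely when its image in $\QC$ is a limit \emph{and} the distinguished object is the limit of the distinguished objects---the second condition is exactly the content of the lemma.

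The paper runs the cofinality in the opposite direction, and this is where the descent hypothesis on $\cF$ is actually spent. After reducing to a representable cover $\pi: X' \to X$, the right-hand side is tautologically a limit over the category $\cC$ of pairs $([n],\, T \to X'^{\times_X(n+1)})$ with $T$ smooth affine. For the left-hand side, one applies smooth descent for $\cF$ to each $S \to X$ along the induced cover $S \times_X X' \to S$, rewriting $f_*\cF_S$ as a totalization; this expresses $\tcF_X$ as a limit over the product category $\cC' = \Delta^{\op} \times (\Sm_{/X})$. The functor $\cC' \to \cC$ sending $([n],\, S \to X)$ to $([n],\, S \times_X X'^{\times_X(n+1)})$ is then cofinal for the trivial reason that each comma category has an initial object, namely $T = S$ with its structure map.
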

\begin{proof}
Let $\pi:X'\to X$ be a smooth cover in $\St_{\cP}$. If $\pi':U\to X'$ is a smooth cover by a scheme, then it suffices to check descent along morphisms $\pi'$ and $\pi\circ \pi'$ both of which are representable, because $X'$ and $X$ are assumed to have affine diagonal. Hence it suffices to treat the case of a representable smooth cover $\pi$.

The \v{C}ech nerve of $\pi$ consists of stacks $\pi_n:X'^{\times_X (n+1)}\to X$. We need to show that the natural map
\begin{equation}\label{stacks: descent map formula}
\tcF_{X}\to\lim\limits_{[n]\in\Delta} \pi_{n*}\tcF_{X'^{\times_X (n+1)}} 
\end{equation}
is an equivalence.

Consider $1$-categories $\cC$ and $\cC'$ defined as follows. An object of $\cC$ consists of a pair $([n],f_n:T\to X'^{\times_X (n+1)})$ where $[n]$ is an object of $\Delta^{\op}$, and $f_n$ is an arbitrary morphism from a smooth affine scheme $T\in \cP$. The other category $\cC'$ consists of pairs $([n],f:S\to X)$ where $[n]\in \Delta^{\op}$, and $S$ is a smooth affine scheme. There is a functor $\cC'\to \cC$ sending $([n], S\to X)$ to $([n], S\times_X X'^{\times_X (n+1)}\to X'^{\times_X (n+1)})$.

We have a functor $\cF_{\pi}:\cC\to D(X)$ sending $([n],f_n:T\to X'^{\times_X (n+1)})$ to $\pi_{n*}f_{n*}\cF_T$. By definition, the codomain of (\ref{stacks: descent map formula}) is computed as $\lim\limits_{\cC^{\op}}\cF_{\pi}$. For each morphism $f:S\to X$ from a smooth affine scheme $S\in \cP$ the fiber product $f:S':=S\times_X X'\to S$ is a smooth affine cover of $S$. By smooth descent for $\cF$, we can then calculate $\tcF_X$ as $\lim\limits_{f:S\to X}\lim\limits_{[n]\in\Delta} f_*\pi_{n,S*}\cF_{S'^{\times_S (n+1)}}$. In other words, $\tcF_{X}$ is equivalent to the limit $\lim\limits_{\cC'^{\op}}\cF_{\pi}|_{\cC'}$. 

The functor $\cC'\to \cC$ is cofinal by Quillen's theorem A \cite[Theorem 4.1.3.1]{lurie-htt}, because for an object $([n],f_n:T\to X'^{\times_X (n+1)})\in\cC$ the slice category $\cC'\times_{\cC}\cC_{([n], T)/}$ has the initial object $S=T\xrightarrow{f_n}X'^{\times_X (n+1)}\xrightarrow{\pi_n}X$ and is therefore weakly contractible. Hence limits of $\cF_{\pi}$ over categories $\cC^{\op}$ and $\cC'^{\op}$ are equivalent, as desired.
\end{proof}
\renewcommand{\cP}{\mathcal{P}}

Specializing the above construction to functors $S\mapsto F_{S/k*}\Omega^{\bullet}_{S/k}\in D(S^{(1)})$ and $S\mapsto \tau^{\leq n}F_{S/k*}\Omega^{\bullet}_{S/k}\in D(S^{(1)})$ we can define the de Rham complex of a smooth Artin stack, together with its conjugate filtration:

\begin{definition}
For a smooth Artin stack $X$  with affine diagonal over a perfect $\bF_p$-algebra $k$ define the following object of $D(X^{(1)})$:
\begin{equation}
F_*\Omega^{\bullet}_{X}:=\lim\limits_{f:S\to X}f^{(1)}_*F_{S/k*}\Omega^{\bullet}_{S/k}
\end{equation}
where the limit is taken over all morphisms $f:S\to X$ from affine smooth schemes to $X$. Similarly, for $n\geq 0$ define the $n$-th step of the conjugate filtration on the de Rham complex as
\begin{equation}
\Fil_n^{\conj}F_*\Omega^{\bullet}_{X}:=\lim\limits_{f:S\to X}f^{(1)}_*\tau^{\leq n}F_{S/k*}\Omega^{\bullet}_{S/k}
\end{equation}
where the limit is taken over the same category of affine schemes that are smooth over $X$.
\end{definition}

Note that $F_{*}\Omega^{\bullet}_{X}$ is an indivisible piece of notation here: we did not define the de Rham complex itself as a sheaf-theoretic object on $X$.

\begin{rem}
A more finitary method of computing $F_{*}\Omega^{\bullet}_{X/k}$ is given by the smooth descent for it, as in Lemma \ref{stacks: basic properties} (4) below. For a smooth scheme $X$ over $k$ the above definition correctly recovers the Frobenius pushforward of the de Rham complex -- for an affine $X$ the diagram over which the limit is taken has a final object given by $S=X$, and the case of a general scheme $X$ follows by descent.
\end{rem}

For a stack $Y$ over $k$ we denote by $\RGamma(Y,-):D(Y)\to D(k)$ the derived pushforward functor along the morphism $Y\to \Spec k$.

\begin{lm}\label{stacks: basic properties}For a smooth Artin stack $X$ with affine diagonal over $k$ the following holds.
\begin{enumerate}
\item The complex of $k$-modules $\RGamma(X^{(1)},F_{*}\Omega^{\bullet}_{X/k})$ is naturally equivalent to the de Rham cohomology $\RGamma_{\dR}(X/k)$ of the stack $X$, as defined e.g. in \cite[Definition 1.1.3]{kubrak-prikhodko-modp}.
\item Conjugate filtration on $F_{*}\Omega^{\bullet}_{X/k}$ is exhaustive, that is the natural map 
\begin{equation}
\colim\limits_n\Fil_n^{\conj}F_{*}\Omega^{\bullet}_{X}\to F_{*}\Omega^{\bullet}_{X}
\end{equation}
is an equivalence.
\item The $n$-th graded piece $\gr_n^{\conj}:=\cone(\Fil_{n-1}F_{*}\Omega^{\bullet}_{X}\to \Fil_n F_{*}\Omega^{\bullet}_{X})$ of the conjugate filtration is equivalent to $\Lambda^n L_{X^{(1)}/k}[-n]$ where $L_{X^{(1)}/k}$ is the cotangent complex of the stack $X^{(1)}$.
\item For a smooth surjective morphism $f:U\to X$ of smooth Artin stacks with affine diagonals there is a natural equivalence
\begin{equation}\label{stacks: de rham complex descent formula}
\Fil_n^{\conj}F_{*}\Omega^{\bullet}_{X}\simeq \lim\limits_{[i]\in \Delta} f_{i*}\Fil^n_{\conj}F_{*}\Omega^{\bullet}_{U^{\times_X (i+1)}}
\end{equation}
where $f_i:U^{\times_X(i+1)}\to X$ are structure maps from the terms of the Cech nerve of $f$. 
\end{enumerate}
\end{lm}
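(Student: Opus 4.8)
The four assertions are formal consequences of the Construction preceding the lemma, and the only non-formal inputs I will use are the Cartier isomorphism for a smooth affine $k$-scheme, smooth descent for the de Rham complex of a smooth affine $k$-scheme and for the derived exterior powers of the cotangent complex, and the fact (noted in the Construction) that every $f\colon S\to X$ from a smooth affine scheme is affine, so $f^{(1)}_{*}$ is $t$-exact. I would prove the parts in the order (4), (1), (3), (2), treating (4) as the crux. For (4) I would apply Lemma \ref{stacks: descent of kan extension} to the functor $S\mapsto\tau^{\le n}F_{S/k*}\Omega^{\bullet}_{S/k}$ on smooth affine $k$-schemes that defines $\Fil_{n}^{\conj}$: its extension is $\Fil_{n}^{\conj}F_{*}\Omega^{\bullet}_{(-)}$, so the lemma yields the equivalence (\ref{stacks: de rham complex descent formula}) once we know this functor satisfies smooth descent on smooth affine $k$-schemes. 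The untruncated de Rham complex $S\mapsto F_{S/k*}\Omega^{\bullet}_{S/k}$ does, but $\tau^{\le n}$ does not commute with the limits in a descent statement, so instead I would induct on $n$: the base case is $\tau^{\le 0}F_{S/k*}\Omega^{\bullet}_{S/k}\simeq\cO_{S^{(1)}}$ by Cartier, and for the inductive step the fiber sequence $\tau^{\le n-1}F_{S/k*}\Omega^{\bullet}_{S/k}\to\tau^{\le n}F_{S/k*}\Omega^{\bullet}_{S/k}\to\cH^{n}(F_{S/k*}\Omega^{\bullet}_{S/k})[-n]$ together with $\cH^{n}(F_{S/k*}\Omega^{\bullet}_{S/k})\simeq\Omega^{n}_{S^{(1)}/k}\simeq\Lambda^{n}L_{S^{(1)}/k}$ (Cartier) reduces descent for $\tau^{\le n}$ to descent for $\tau^{\le n-1}$ and for $\Lambda^{n}$ of the cotangent complex, using that a fiber of a natural transformation of smooth-descent functors into a stable $\infty$-category again satisfies smooth descent.

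For (1), since $\RGamma(X^{(1)},-)$ is a right adjoint it commutes with limits, so $\RGamma(X^{(1)},F_{*}\Omega^{\bullet}_{X})\simeq\lim\limits_{f\colon S\to X}\RGamma(S^{(1)},F_{S/k*}\Omega^{\bullet}_{S/k})\simeq\lim\limits_{f\colon S\to X}\RGamma_{\dR}(S/k)$, the last step because $F_{S/k}$ is affine and $\RGamma(S,\Omega^{\bullet}_{S/k})\simeq\RGamma_{\dR}(S/k)$; this limit is one of the standard models for the de Rham cohomology of the stack $X$, agreeing with \cite[Definition 1.1.3]{kubrak-prikhodko-modp} by the cofinality argument used in the proof of Lemma \ref{stacks: descent of kan extension}, which gives the desired natural equivalence. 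For (3), applying the Construction to the fiber sequence of functors $\tau^{\le n-1}F_{S/k*}\Omega^{\bullet}_{S/k}\to\tau^{\le n}F_{S/k*}\Omega^{\bullet}_{S/k}\to\cH^{n}(F_{S/k*}\Omega^{\bullet}_{S/k})[-n]$ — using that the limits defining the extension preserve fibers and shifts, and that $f^{(1)}_{*}$ is exact — identifies $\gr_{n}^{\conj}F_{*}\Omega^{\bullet}_{X}$ with $\lim\limits_{f\colon S\to X}f^{(1)}_{*}\Omega^{n}_{S^{(1)}/k}[-n]$ via Cartier. Reindexing along $g=f^{(1)}$, which ranges over all smooth affine schemes over $X^{(1)}$ because $k$ is perfect (Frobenius twist is an auto-equivalence of $k$-schemes compatible with slicing over $X$ and $X^{(1)}$), this becomes $\lim\limits_{g\colon T\to X^{(1)}}g_{*}\Lambda^{n}L_{T/k}[-n]$, which equals $\Lambda^{n}L_{X^{(1)}/k}[-n]$ by smooth descent for the derived exterior power of the cotangent complex (again reducing to a Čech computation via Lemma \ref{stacks: descent of kan extension}).

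Part (2) is then pure formalism. By definition of the canonical truncation, $\cofib(\tau^{\le n}F_{S/k*}\Omega^{\bullet}_{S/k}\to F_{S/k*}\Omega^{\bullet}_{S/k})$ lies in $D^{\ge n+1}(S^{(1)})$; since $f^{(1)}_{*}$ is $t$-exact and $D^{\ge n+1}(X^{(1)})$ is closed under limits, $\cofib(\Fil_{n}^{\conj}F_{*}\Omega^{\bullet}_{X}\to F_{*}\Omega^{\bullet}_{X})$ lies in $D^{\ge n+1}(X^{(1)})$, so $\Fil_{n}^{\conj}F_{*}\Omega^{\bullet}_{X}\to F_{*}\Omega^{\bullet}_{X}$ is an isomorphism on $\cH^{m}$ for all $m\le n$; passing to the filtered colimit over $n$ and using $t$-exactness of filtered colimits, the natural map $\colim\limits_{n}\Fil_{n}^{\conj}F_{*}\Omega^{\bullet}_{X}\to F_{*}\Omega^{\bullet}_{X}$ is an isomorphism on every cohomology sheaf, hence an equivalence. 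The one step that is not bookkeeping is, as indicated, the smooth descent of $\tau^{\le n}F_{S/k*}\Omega^{\bullet}_{S/k}$ in (4): descent for the full de Rham complex of a smooth affine scheme is classical, but because $\tau^{\le n}$ does not commute with the descent limit one must pass through the graded pieces, whose descent is exactly the content of the Cartier isomorphism together with the descent of exterior powers of the cotangent complex — nothing here is special to stacks.
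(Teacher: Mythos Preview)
Your proof is correct and follows essentially the same approach as the paper's: both argue (1) via commutation of $\RGamma(X^{(1)},-)$ with limits, (2) via the cofibers $\tau^{>n}F_{S/k*}\Omega^{\bullet}_{S/k}$ living in $D^{\geq n+1}$ and this being preserved by (affine) pushforward and limits, (3) via Cartier plus smooth descent for $\Lambda^n L$, and (4) by checking smooth descent of $S\mapsto\tau^{\leq n}F_{S/k*}\Omega^{\bullet}_{S/k}$ inductively on $n$ through the graded pieces and then invoking Lemma~\ref{stacks: descent of kan extension}. Your write-up simply makes explicit the induction in (4) and the reindexing along Frobenius twist in (3) that the paper leaves implicit.
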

\begin{proof}
(1) follows from the natural equivalence $\RGamma(S^{(1)},F_{*}\Omega^{\bullet}_{S})\simeq\RGamma_{\dR}(S/k)$ for smooth affine schemes $S$ and the fact that $\RGamma(X^{(1)},-)$ commutes with limits.

Statement (2) is equivalent to proving that $\colim\limits_n\lim\limits_{f:S\to X}f^{(1)}_*\tau^{>n}F_{*}\Omega^{\bullet}_{S}$ vanishes. Since $f^{(1)}_*$ is left exact with respect to the standard $t$-structures on $D(S^{(1)})$ and $D(X^{(1)})$, each object $f^{(1)}_*\tau^{>n}F_{*}\Omega^{\bullet}_{S}$ is concentrated in degrees $>n$. The subcategory of objects concentrated in cohomological degrees $>n$ is closed under limits \cite[Corollary 1.2.1.6]{lurie-ha}. Therefore for each $n$ the object $\lim\limits_{f:S\to X}f^{(1)}_*\tau^{>n}F_{*}\Omega^{\bullet}_{S}\in D(X^{(1)})$ is concentrated in degrees $>n$, and their colimit along $n$ vanishes, finishing the proof of (2).

By Cartier isomorphism, for a smooth scheme $S$ the $n$-th graded piece of the conjugate filtration on $F_{*}\Omega^{\bullet}_{S}$ is naturally equivalent to $\Omega^n_{S^{(1)}/k}[-n]$. Hence $\gr_n^{\conj}F_{*}\Omega^{\bullet}_{X}$ is computed by the limit $\lim\limits_{S\to X}f^{(1)}_*\Omega^n_{S^{(1)}/k}[-n]$, which coincides with the shifted $n$-th exterior power of the cotangent complex $L_{X^{(1)}/k}$, as a consequence of smooth descent for cotangent complex, cf. \cite[Proposition 1.1.4]{kubrak-prikhodko-modp} or \cite[Remark 2.8]{antieau-bhatt-mathew}. This proves statement (3).

By smooth descent for cotangent complex and its exterior powers \cite[Remark 2.8]{bhatt-completions}, the assignment $S\mapsto \tau^{\leq n}F_{*}\Omega^{\bullet}_{S}$ satisfies smooth descent, so (4) follows by Lemma \ref{stacks: descent of kan extension}.
\end{proof}

\begin{example}
Explicitly, for a smooth affine group scheme $G$ over $k$ the object $F_*\Omega^{\bullet}_{BG/k}\in D^+(BG^{(1)})\simeq D^+(\Rep_k G^{(1)})$ is a complex of $k$-linear representations of $G^{(1)}$ that can be represented by the totalization of the cosimplicial complex
\begin{equation}\label{stacks: bg example}
\begin{tikzcd}
F_{G/k*}\Omega^{\bullet}_{G/k}\arrow[r, shift left=0.65ex] \arrow[r, shift right=0.65ex] & F_{G\times G/k*}\Omega^{\bullet}_{G\times G/k} \arrow[r, shift left=1.3ex] \arrow[r, shift right=1.3ex] \arrow[r] &\ldots
\end{tikzcd}
\end{equation}
obtained by applying the functor of Frobenius linearized de Rham complex to the \v{C}ech nerve of the map $G\to \Spec k$. For each $n\geq 1$, the group scheme $G$ acts on $\Omega^{\bullet}_{G^{\times n}/k}$ via diagonal translations, and consequently $G^{(1)}$ acts on $F_{G^{\times n}*}\Omega^{\bullet}_{G^{\times n}/k}$, giving an action of $G^{(1)}$ on the cosimplicial complex of $k$-modules (\ref{stacks: bg example}).
\end{example}

\begin{pr}\label{stacks: frobenius decomposition}
For a smooth Artin stack $X$  with affine diagonal over a perfect $\bF_p$-algebra $k$ there is an equivalence
\begin{equation}\label{stacks: frobenius decomposition formula}
F_{X/k}^*F_{*}\Omega^{\bullet}_{X/k}\simeq \bigoplus\limits_{i\geq 0}F^*_{X/k} \Lambda^i L_{X^{(1)}/k}[-i]
\end{equation}
\end{pr}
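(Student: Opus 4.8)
\emph{Strategy.} I would deduce this from the scheme case, Proposition~\ref{dr stack: decomposition after frobenius}, by descent, and there are two essentially equivalent ways to organize it. The cleanest is to observe that the de Rham stack is already available for smooth Artin stacks: since $X^{\dR}$ is an fpqc (in fact, by Lemma~\ref{dr stack: etale descent}, \'etale) sheaf, it, together with the morphisms $\pi_X:X\to X^{\dR}$, $\nu_X:X^{\dR}\to X^{(1)}$, the gerbe structure on $\nu_X$, and the flat base change of Lemma~\ref{proof: gerbe base change}, all extend to smooth Artin stacks with affine diagonal by smooth descent from the scheme case; and the stacky analogue of Proposition~\ref{dr stack: cohomology}, $R\nu_{X*}\cO_{X^{\dR}}\simeq F_{*}\Omega^{\bullet}_{X/k}$, likewise follows by smooth descent directly against the definition of $F_{*}\Omega^{\bullet}_{X/k}$. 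Granting this, the proof of Proposition~\ref{dr stack: decomposition after frobenius} goes through verbatim: the composite $X\xrightarrow{\pi_X}X^{\dR}\xrightarrow{\nu_X}X^{(1)}$ is the relative Frobenius $F_{X/k}$, so the pullback of the gerbe $\nu_X$ along $F_{X/k}$ acquires the section $(\pi_X,\id)$ and is therefore trivial; writing $E=F_{X/k}^{*}T_{X^{(1)}/k}$ (now $T_{X^{(1)}/k}$ is the tangent complex of the stack $X^{(1)}$), this gives $X^{\dR}\times_{X^{(1)}}X\simeq B_X E^{\sharp}$, whence $R\nu'_{X*}\cO\simeq\bigoplus_{i\ge0}\Lambda^{i}E^{\vee}[-i]=\bigoplus_{i\ge0}F_{X/k}^{*}\Lambda^{i}L_{X^{(1)}/k}[-i]$; and Lemma~\ref{proof: gerbe base change} (for stacks, by descent) identifies $R\nu'_{X*}\cO$ with $F_{X/k}^{*}R\nu_{X*}\cO_{X^{\dR}}=F_{X/k}^{*}F_{*}\Omega^{\bullet}_{X/k}$. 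Combining these equivalences yields (\ref{stacks: frobenius decomposition formula}).

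\emph{Alternative via the Kan extension formalism.} If one prefers to stay within the Construction preceding Lemma~\ref{stacks: descent of kan extension}, the plan is instead to upgrade Proposition~\ref{dr stack: decomposition after frobenius} to a \emph{natural} equivalence between the two functors $\Sm^{\op}\to\QC_{\obj}$ given by $S\mapsto(D(S),F_{S/k}^{*}F_{S/k*}\Omega^{\bullet}_{S/k})$ and $S\mapsto(D(S),\bigoplus_{i}F_{S/k}^{*}\Lambda^{i}L_{S^{(1)}/k}[-i])$ (using $\Omega^{i}_{S^{(1)}/k}=\Lambda^{i}L_{S^{(1)}/k}$ for smooth $S$), with the evident pullback structure maps. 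This is legitimate because the equivalence produced in the proof of Proposition~\ref{dr stack: decomposition after frobenius} is the composite of three manifestly functorial inputs --- the flat base change of Lemma~\ref{proof: gerbe base change} along $F_{S/k}$, the trivialization of the gerbe $(S/k)^{\dR}\times_{S^{(1)}}S$ by the canonical section $(\pi_{S/k},\id)$, and the natural identification $R\pi_{*}\cO_{B_SE^{\sharp}}\simeq\bigoplus_i\Lambda^{i}E^{\vee}[-i]$ --- and the normalization on cohomology sheaves discussed at the end of that proof is not used. One then checks that both functors satisfy smooth descent (for the exterior-powers side this is smooth descent for the cotangent complex and its exterior powers, the same input as in Lemma~\ref{stacks: basic properties}(3)--(4); for the de Rham side one combines Lemma~\ref{stacks: basic properties}(4) for schemes with the flat base change for $\nu$ of Lemma~\ref{proof: gerbe base change}), applies Lemma~\ref{stacks: descent of kan extension} to pass to smooth Artin stacks with affine diagonal, and identifies the Kan extensions with the two sides of (\ref{stacks: frobenius decomposition formula}) --- the de Rham side using that $F_{X/k}^{*}$, being a flat pullback, commutes with the bounded-below limit defining $F_{*}\Omega^{\bullet}_{X/k}$.

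\emph{Main obstacle.} The one genuinely delicate point is that relative Frobenius is badly behaved with respect to smooth descent: the squares relating $F_{X/k}$, $F_{S/k}$ for a smooth $S\to X$, and $F_{U/k}$ for a smooth cover $U\to X$ are not Cartesian, and $F_{X/k}:X\to X^{(1)}$ need not be representable for a stack (its fibers involve Frobenius kernels), although it is flat (check on a smooth atlas: the relative Frobenius of the atlas over $X$ is faithfully flat, and flatness is fppf-local on the source). Consequently the commutation of $F_{X/k}^{*}$ with the limit defining $F_{*}\Omega^{\bullet}_{X/k}$ --- equivalently, the smooth descent for $S\mapsto F_{S/k}^{*}F_{S/k*}\Omega^{\bullet}_{S/k}$, equivalently, the stacky analogue of Proposition~\ref{dr stack: cohomology} --- is not a formal consequence of descent for the de Rham complex alone, and one must route through the flat base change for $\nu$ (Lemma~\ref{proof: gerbe base change}), rewriting each Frobenius pullback as the cohomology of a genuine base-changed de Rham stack. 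Once this is in place everything else is bookkeeping within the formalism already set up in the section.
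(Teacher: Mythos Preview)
Your overall strategy---deduce the stacky decomposition from Proposition~\ref{dr stack: decomposition after frobenius} by smooth descent via the Kan-extension formalism of Lemma~\ref{stacks: descent of kan extension}---is exactly what the paper does. You also correctly isolate the genuine difficulty: the relative Frobenius does not fit into Cartesian squares with smooth maps, so descent for $F_{S/k}^{*}F_{S/k*}\Omega^{\bullet}_{S/k}$ (equivalently, for $F_{S/k}^{*}\Lambda^{i}L_{S^{(1)}/k}$) is not a formality.

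The gap is that you do not actually close this difficulty. In your ``Alternative'' paragraph you assert that descent for the exterior-powers side is ``smooth descent for the cotangent complex and its exterior powers, the same input as in Lemma~\ref{stacks: basic properties}(3)--(4)''. But that lemma gives descent for $S\mapsto \Lambda^{i}L_{S^{(1)}/k}$ as an object of $D(S^{(1)})$, not for its Frobenius pullback to $D(S)$; passing from one to the other is precisely the obstacle you flag two paragraphs later. Your proposed fix on the de Rham side---``route through the flat base change for $\nu$''---does not obviously help either: rewriting $F_{S/k}^{*}F_{S/k*}\Omega^{\bullet}_{S/k}$ as $R\nu'_{S*}\cO$ still leaves you to compare $\nu'_{S}$ and $\nu'_{T}$ for a smooth $S\to T$, and the failure of the Frobenius squares to be Cartesian reappears in exactly the same form. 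So neither branch of your argument supplies the missing descent statement.

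The paper resolves this with a dedicated lemma (Lemma~\ref{stacks: frobenius pullback descent}, placed immediately after the proposition): for any flat surjection $f:U\to X$ of Artin stacks and any $n$, the natural map
\[
F_{X}^{*}\Lambda^{n}L_{X}\longrightarrow \lim_{\Delta} f_{\bullet*}F_{U_{\bullet}}^{*}\Lambda^{n}L_{U_{\bullet}}
\]
is an equivalence. The proof is a direct computation in the spirit of \cite[Theorem~3.1]{bms2}: using the transitivity triangle $f_{\bullet}^{*}L_{X}\to L_{U_{\bullet}}\to L_{U_{\bullet}/X}$ one filters the cofiber so that its graded pieces involve $\Lambda^{j}L_{U_{\bullet}/X}$ with $j>0$, and then one shows the relevant cosimplicial object becomes contractible after a faithfully flat base change. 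Once this lemma is in hand, the proposition is literally one sentence. Your first approach (extending the entire de Rham-stack apparatus to Artin stacks) is plausible but would require reproving the gerbe property and Proposition~\ref{dr stack: cohomology} in that generality, where the tangent ``bundle'' is now a complex; the paper's route via Lemma~\ref{stacks: frobenius pullback descent} sidesteps all of that.
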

\begin{proof}
Given Proposition \ref{dr stack: decomposition after frobenius}, this follows formally from smooth descent for Frobenius pullback of the cotangent complex, established in Lemma \ref{stacks: frobenius pullback descent} below.
\end{proof}

For a future application, we prove the following in a larger generality of not necessarily smooth stacks. The Frobenius endomorphism $F_X$ for a non-smooth stack $X$ might not be flat, and everywhere below for a morphism $f:X\to Y$ we denote by $f^*:D(Y)\to D(X)$ the {\it derived} pullback functor.

\begin{lm}\label{stacks: frobenius pullback descent}
Let $f:U\to X$ be a flat surjective morphism of arbitrary Artin stacks over $\bF_p$. Consider the simplicial \v{C}ech nerve $U_{\bullet}:=U^{\times_X\bullet}$, each term equipped with the structure map $f_i:U^{\times_X (i+1)}\to X$. For all $n$ the natural map \begin{equation}\label{stacks: frobenius pullback descent map}F_{X}^*\Lambda^n L_{X}\to \lim\limits_{\Delta} f_{\bullet *}F_{U_{\bullet}}^*\Lambda^n L_{U_{\bullet}}\end{equation} is an equivalence.

In particular for a smooth Artin stack $X$ with affine diagonal over a perfect $\bF_p$-algebra $k$, the natural map $F_{X}^*\Lambda^nL_{X}\to \lim\limits_{f:S\to X}f_*F_{S}^*\Lambda^n L_{S}$,  where the limit is taken over all smooth affine schemes $S$ mapping to $X$, is an equivalence.
\end{lm}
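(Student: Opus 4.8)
The plan is to deduce this from flat descent for the derived exterior powers of the cotangent complex, which is standard \cite[Remark 2.8]{bhatt-completions}, by threading the Frobenius pullback $F_{(-)}^{*}$ through the usual argument; the point requiring care is that $F_{X}$ need not be flat once $X$ is singular, so $F_{X}^{*}$ does not obviously commute with the totalizations that occur. The key elementary input is that the absolute Frobenius is a natural transformation of the identity functor, so that for each term $f_{i}\colon U_{i}\to X$ of the \v{C}ech nerve one has $f_{i}\circ F_{U_{i}}=F_{X}\circ f_{i}$, hence $f_{i}^{*}F_{X}^{*}\simeq F_{U_{i}}^{*}f_{i}^{*}$ as symmetric monoidal functors. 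Combined with ordinary faithfully flat descent for quasi-coherent sheaves applied to the object $F_{X}^{*}\Lambda^{n}L_{X}\in D(X)$, this yields $F_{X}^{*}\Lambda^{n}L_{X}\simeq\lim\limits_{\Delta}f_{\bullet *}F_{U_{\bullet}}^{*}\Lambda^{n}f_{\bullet}^{*}L_{X}$, and the map (\ref{stacks: frobenius pullback descent map}) is identified with the map obtained by applying $\lim\limits_{\Delta}f_{\bullet *}(-)$ to the termwise map $F_{U_{\bullet}}^{*}\Lambda^{n}f_{\bullet}^{*}L_{X}\to F_{U_{\bullet}}^{*}\Lambda^{n}L_{U_{\bullet}}$ induced by the transitivity map $f_{\bullet}^{*}L_{X}\to L_{U_{\bullet}}$.

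Next I would compute the cofiber of this map using the transitivity cofiber sequence $f_{i}^{*}L_{X}\to L_{U_{i}}\to L_{U_{i}/X}$ together with the standard finite filtration on the derived exterior powers of a cofiber sequence, whose $j$-th graded piece for $\Lambda^{n}$ is $\Lambda^{j}f_{i}^{*}L_{X}\otimes\Lambda^{n-j}L_{U_{i}/X}$ and whose top step $\Lambda^{n}f_{i}^{*}L_{X}\hookrightarrow\Lambda^{n}L_{U_{i}}$ is precisely the map above. Thus $\cofib(\Lambda^{n}f_{i}^{*}L_{X}\to\Lambda^{n}L_{U_{i}})$ is a finite extension of the pieces $\Lambda^{j}f_{i}^{*}L_{X}\otimes\Lambda^{n-j}L_{U_{i}/X}$ for $0\le j\le n-1$. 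Applying the exact symmetric monoidal functor $F_{U_{i}}^{*}$ and using $F_{U_{i}}^{*}f_{i}^{*}=f_{i}^{*}F_{X}^{*}$, each such piece becomes $f_{i}^{*}(F_{X}^{*}\Lambda^{j}L_{X})\otimes F_{U_{i}}^{*}\Lambda^{n-j}L_{U_{i}/X}$, whose pushforward along $f_{i}$ is $F_{X}^{*}\Lambda^{j}L_{X}\otimes_{\cO_{X}}f_{i*}F_{U_{i}}^{*}\Lambda^{n-j}L_{U_{i}/X}$ by the projection formula. So the statement reduces to proving
\[
\lim\limits_{\Delta}f_{\bullet *}F_{U_{\bullet}}^{*}\Lambda^{m}L_{U_{\bullet}/X}=0\qquad\text{for every }m\ge 1 .
\]

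For this vanishing I would base-change the \v{C}ech nerve of $f$ along $f$ itself: the cosimplicial scheme $U_{\bullet}\times_{X}U$ over $U$ is the \v{C}ech nerve of the cover $U\times_{X}U\to U$, which has a section given by the diagonal, hence it is a split (augmented) simplicial object over $U$. Since all of its structure maps and the extra degeneracy are maps of $U$-schemes and are therefore Frobenius-equivariant, applying $F_{(-)}^{*}\Lambda^{m}L_{(-)/U}$ and pushing forward to $U$ produces a split cosimplicial object of $D(U)$ with augmentation term $F_{U}^{*}\Lambda^{m}L_{U/U}=0$, whose totalization therefore vanishes. Flat base change for $f$ --- applicable both to the relative cotangent complexes, via $L_{Y\times_{X}U/U}\simeq\ppr^{*}L_{Y/X}$, and to the pushforwards $f_{i*}$, since $f$ is flat --- identifies the pullback along $f$ of $\lim\limits_{\Delta}f_{\bullet *}F_{U_{\bullet}}^{*}\Lambda^{m}L_{U_{\bullet}/X}$ with this vanishing totalization; as $f$ is a faithfully flat cover, the object itself vanishes. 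The final assertion of the lemma follows from the first part by a standard reindexing of the limit along a smooth atlas, together with Zariski descent for $F_{(-)}^{*}\Lambda^{n}L_{(-)}$ on schemes.

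The main obstacle is making rigorous the several interchanges of totalizations with functors that are not a priori continuous: pullback $F_{X}^{*}$ along a possibly non-flat Frobenius, tensoring with the fixed and possibly unbounded object $F_{X}^{*}\Lambda^{j}L_{X}$, the projection formula, and pullback along $f$. I would dispatch this by first reducing to the case $X=\Spec A$, $U=\Spec B$ with $B$ faithfully flat over $A$ --- a routine but fiddly unwinding, using that the assertion is local and that $D$ of an Artin stack is a limit over a smooth atlas --- where every pushforward in sight is exact, so that the cosimplicial objects appearing are levelwise connective; their partial totalizations are then finite limits, hence finite colimits in the stable setting, which commute with all the functors above, and one concludes by passing to the limit over the degreewise-stabilizing tower of partial totalizations. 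Getting this convergence bookkeeping right, and correctly propagating the finite filtration on exterior powers through the totalization, is where I expect the real work to lie; the conceptual content --- descent for $\Lambda^{\bullet}L$ and the Frobenius-equivariant splitting of the base-changed \v{C}ech nerve --- is straightforward.
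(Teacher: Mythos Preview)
Your proposal is correct and follows essentially the same route as the paper: reduce via flat descent and the transitivity filtration on $\Lambda^n$ to showing that the cosimplicial object with terms $F_{U_\bullet}^*(\Lambda^j L_{U_\bullet/X}\otimes f_\bullet^*\Lambda^{n-j}L_X)$ has vanishing totalization, then base-change along $f$ to get a split \v{C}ech nerve. The only organizational difference is that the paper reduces to the affine case $X=\Spec A$, $U=\Spec B$ \emph{before} separating out the tensor factor, and then uses contractibility of the base-changed cosimplicial object (which is preserved by tensoring with $F_A^*\Lambda^{n-j}L_A$) rather than connectivity and partial totalizations --- this makes the convergence bookkeeping you flag as the main obstacle essentially automatic.
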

\begin{proof}
The proof follows the same idea as Bhatt's proof \cite[Theorem 3.1]{bms2} of descent for cotangent complex itself. First of all, as in the proof of Lemma \ref{stacks: descent of kan extension}, for the first assertion of the lemma it suffices to treat the case of a representable $f$. By flat descent for quasi-coherent sheaves, $F_{X}^*\Lambda^nL_{X}$ is equivalent to the limit $\lim\limits_{\Delta} f_{\bullet *}f_{\bullet}^*F_{X}^*\Lambda^nL_{X}\simeq \lim\limits_{\Delta} f_{\bullet *}F_{U_{\bullet}}^*f_{\bullet}^*\Lambda^nL_{X}$. Hence the fact that (\ref{stacks: frobenius pullback descent map}) is an equivalence in $D(X)$ is equivalent to the vanishing of the object
\begin{equation}
\lim\limits_{\Delta} f_{\bullet*}F^*_{U_{\bullet}}\cone(f_{\bullet}^*\Lambda^nL_{X}\xrightarrow{df_{\bullet}} \Lambda^n L_{U_{\bullet}})\in D(X).
\end{equation}
Using the fundamental triangle $f_{\bullet}^*L_{X}\to L_{U_{\bullet}}\to L_{U_{\bullet}/X}$, we can endow the cone $\cone(df_{\bullet}:f_{\bullet}^*\Lambda^nL_{X}\to \Lambda^n L_{U_{\bullet}})$ with a finite filtration whose graded pieces are $\Lambda^{j}L_{U_{\bullet}/X}\otimes f_{\bullet}^*\Lambda^{n-j}L_X$ with $0<j\leq n$.

Hence it suffices to check that the limit $\lim\limits_{\Delta} f_{\bullet *}F_{U_{\bullet}}^*(\Lambda^j L_{U_{\bullet}/X}\otimes f_{\bullet}^*\Lambda^{n-j} L_X)$ vanishes for all $0<j\leq n$. This can be checked flat locally on $X$ and \'etale locally on $U$, so we may and do assume that $X=\Spec A$ and $U=\Spec B$ are both affine schemes. 

We need to check that the totalization of the following cosimplicial object of $D(A)$ \begin{equation}\label{stacks: affine cotangent totalization to kill}
\Delta\ni [i]\mapsto F^*_{B^{\otimes_A (i+1)}}(\Lambda^j L_{B^{\otimes_A (i+1)}/A})\otimes_A F_A^*\Lambda^{n-j}L_{A}\end{equation} is zero. Since $A\to B$ if a faithfully flat map, this can be checked after tensoring this cosimplicial $A$-module with $B$. We will now prove that the cosimplicial object $[i]\to F^*_{B^{\otimes_A (i+1)}}(\Lambda^j L_{B^{\otimes_A (i+1)}/A})\otimes_A B$ of $D(B)$ is contractible. This implies the desired vanishing of the totalization of (\ref{stacks: affine cotangent totalization to kill}) because this cosimplicial object remains contractible after tensoring with the complex $F_A^*\Lambda^{n-j}L_{A}$ over $A$.

By base change for the cotangent complex, for an $A$-algebra $C$ there is an equivalence $F_C^*\Lambda^n L_{C/A}\otimes_A B\simeq F_{C\otimes_A B}^*\Lambda^n L_{(C\otimes_A B)/B}$ natural in $C$. In particular, the cosimplicial object $[i]\mapsto F^*_{B^{\otimes_A (i+1)}}\Lambda^j L_{B^{\otimes_A (i+1)}/A}\otimes_A B$ of $D(B)$ is equivalent to the result of applying the functor $D\mapsto F^*_D\Lambda^n L_{D/B}$ from $B$-algebras to complexes of $B$-modules to the \v{C}ech nerve of the map $\id_B\otimes 1:B\to B\otimes_A B$. This \v{C}ech nerve is homotopy equivalent to the constant cosimplicial $B$-algebra $B$, hence the result of applying functor $D\mapsto F_D^*\Lambda^n L_{D/B}$ is homotopy equivalent to the constant cosimplicial object with value $F_B^*\Lambda^n L_{B/B}=0$, as desired.

The second assertion of the lemma follows formally from the descent statement that we just established: the map in question is an equivalence when $X$ is a smooth scheme, and by Lemma \ref{stacks: descent of kan extension} its codomain satisfies descent as well, hence it is an equivalence for all smooth Artin stacks. 
\end{proof}

We introduce a generalization of the notion of Frobenius splitting to Artin stacks:

\begin{definition}
For an Artin stack $X$ over $\bF_p$ a Frobenius splitting is a map $\tau:RF_{X*}\cO_X\to \cO_X$ in the derived category of quasi-coherent sheaves on $X$, such that composition $\cO_X\xrightarrow{F_X^{\#}}RF_{X*}\cO_X\xrightarrow{\tau} \cO_X$ equals the identity.
\end{definition}

For a scheme $X$ the Frobenius endomorphism is affine, so the derived pushforward $RF_{X*}\cO_X$ is identified with the plain pushforward $F_{X*}\cO_X$, but e.g. for the classifying stack $X=BG$ of a non-commutative connected reductive group the derived pushforward $RF_{BG*}\cO_{BG}$ is concentrated in infinitely many cohomological degrees \cite[Proposition II.12.13]{jantzen}.

\begin{pr}\label{stacks: bg is fsplit}
For a connected reductive group $G$ over a perfect field $k$ the classifying stack $BG$ admits a Frobenius splitting.
\end{pr}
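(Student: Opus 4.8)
The plan is to exploit that, although $RF_{BG*}\cO_{BG}$ is spread over infinitely many cohomological degrees (as noted just above), it is nevertheless \emph{coconnective}, with zeroth cohomology sheaf equal to $\cO_{BG^{(1)}}$; the canonical map to $H^{0}$ then splits the Frobenius map. Since $k$ is perfect, the formulation above in terms of the absolute Frobenius of $BG$ is equivalent to the one in Proposition~\ref{intro: BG fsplit} in terms of the relative Frobenius $F\colon BG\to BG^{(1)}$, and I will prove the latter. Under $\QCoh(BG)\simeq\Rep_{k}G$, the morphism $F$ is the map of classifying stacks induced by the relative Frobenius homomorphism $F_{G/k}\colon G\to G^{(1)}$; since $G$ is smooth, $F_{G/k}$ is a faithfully flat isogeny with kernel the Frobenius kernel $G_{1}$, so $F^{*}\colon D(\QCoh(BG^{(1)}))\to D(\QCoh(BG))$ is the $t$-exact restriction functor along $F_{G/k}$, its right adjoint $RF_{*}$ is left $t$-exact, and hence $RF_{*}\cO_{BG}\in D^{\ge 0}(\QCoh(BG^{(1)}))$. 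Equivalently, pulling back along the fppf cover $\Spec k\to BG^{(1)}$ identifies $RF_{*}\cO_{BG}$ with $\RGamma(G_{1},k)$ carrying the residual $G^{(1)}$-action, which is visibly coconnective.

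Next I would run the degree-zero computation. The sheaf $H^{0}(RF_{*}\cO_{BG})=F_{*}\cO_{BG}$ equals, as a $G^{(1)}$-representation, the $G_{1}$-invariants $k^{G_{1}}=k=\cO_{BG^{(1)}}$, and the Frobenius map $F^{\#}\colon\cO_{BG^{(1)}}\to RF_{*}\cO_{BG}$ induces on $H^{0}$ the unit $\cO_{BG^{(1)}}\to F_{*}F^{*}\cO_{BG^{(1)}}$ of the $(F^{*},F_{*})$-adjunction of abelian categories, which under these identifications is $\id_{\cO_{BG^{(1)}}}$. Since $RF_{*}\cO_{BG}$ is coconnective, for an object $M$ of the heart one has $\Hom_{D}(RF_{*}\cO_{BG},M)\cong\Hom(H^{0}(RF_{*}\cO_{BG}),M)$; in particular $\id_{\cO_{BG^{(1)}}}$ extends to a morphism $q\colon RF_{*}\cO_{BG}\to\cO_{BG^{(1)}}$, and $q\circ F^{\#}=\id_{\cO_{BG^{(1)}}}$ because both are endomorphisms of $\cO_{BG^{(1)}}$ in the derived category that agree on $H^{0}$. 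Thus $q$ is the desired splitting.

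There is no deep obstacle here; the only points that demand care are the identification of $F$ with $B(F_{G/k})$ — hence the coconnectivity of $RF_{*}\cO_{BG}$ — and the degree-zero computation, and both become formal after passing to representations of $G$ and $G^{(1)}$. I note that this argument uses neither reductivity nor connectedness of $G$: it applies verbatim to the classifying stack of any affine group scheme over $k$. The hypotheses are retained only because the intended consequence — degeneration of the Hodge-to-de Rham spectral sequence of $BG$, Theorem~\ref{stacks: main htdr bg} — is of interest precisely for connected reductive groups.
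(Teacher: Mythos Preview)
There is a genuine gap: the homological algebra step is wrong. You claim that for a coconnective object $C\in D^{\geq 0}$ and $M$ in the heart one has $\Hom_{D}(C,M)\cong\Hom(H^{0}(C),M)$. This is false. From the triangle $H^{0}(C)\to C\to \tau^{\geq 1}C$ one gets
\[
\Hom_{D}(\tau^{\geq 1}C,M)\to\Hom_{D}(C,M)\to\Hom(H^{0}(C),M)\to\Hom_{D}(\tau^{\geq 1}C,M[1]),
\]
and neither outer term is forced to vanish: for $N$ in the heart, $\Hom_{D}(N[-1],M)=\Ext^{1}(N,M)$, which is typically nonzero in $\Rep_{k}G^{(1)}$ since this category is not semisimple in positive characteristic. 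The statement you have in mind holds with the roles reversed---$\Hom_{D}(M,C)\cong\Hom(M,H^{0}(C))$ for $C$ coconnective---or with $C$ \emph{connective} rather than coconnective; neither helps here. The obstruction to splitting $\cO_{BG^{(1)}}\to RF_{*}\cO_{BG}$ is precisely the connecting map $\tau^{\geq 1}\RGamma(G_{1},k)\to k[1]$ in $D(\Rep G^{(1)})$, and there is no formal reason for it to vanish.

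Your closing remark that the argument uses neither reductivity nor connectedness should have been a red flag. The paper's proof makes essential use of reductivity: after reducing to simply connected $G$ via an isogeny lemma, it invokes the Steinberg representation $\St$, whose restriction to the Frobenius kernel $G_{1}$ is irreducible and projective. Then $Q=\St\otimes\St^{\vee}$ has $\RGamma(G_{1},Q)\simeq k$ concentrated in degree $0$, and the $G$-equivariant map $k\xrightarrow{\id_{\St}}Q$ induces the desired retraction $\RGamma(G_{1},k)\to\RGamma(G_{1},Q)\simeq k$ in $D(\Rep G^{(1)})$. This is where the actual content lies.
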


\begin{proof}
We will deduce this from properties of the Steinberg representation of $G$, following the idea of the proof of \cite[Theorem 2.1]{cline-parshall-scott}. We denote by $G^{(1)}$ the Frobenius twist of $G$, viewed as a group scheme over $k$. Note that its clasiffying stack $BG^{(1)}$ is the Frobenius twist of the $k$-stack $BG$, and the Frobenius morphism $F_{BG}:BG\to BG^{(1)}$ is induced by the relative Frobenius map $F_G:G\to G^{(1)}$ of the $k$-group scheme $G$.

Recall that the bounded below derived category $D^+(BG^{(1)})$ is equivalent to the derived category $D^+(\Rep G^{(1)})$ of the abelian category of algebraic representations of $G^{(1)}$. Under this equivalence the structure sheaf $\cO_{BG^{(1)}}$ gets carried to the trivial $1$-dimensional representation of $G^{(1)}$.

Denote by $G_1:=\ker(G\xrightarrow{F_{G}} G^{(1)})$ the Frobenius kernel of $G$, it is a finite local group scheme over $k$. In terms of representations, the functor $RF_{BG*}:D^+(BG)\to D^+(BG^{(1)})$ is described as sending $V\in D^+(BG)\simeq D^+(\Rep G)$ to the cohomology complex $\RGamma(G_1,V|_{G_1})$ of the Frobenius kernel, equipped with the residual action of $G^{(1)}\simeq G/G_1$. The statement of the proposition is then equivalent to proving that the map $k\simeq H^0(G_1,k)\to \RGamma(G_1, k)$ in $D(\Rep G^{(1)})$ has a splitting.

For an extension $k\subset k'$ of perfect fields the natural map $\RGamma(G_1,k)\otimes_k k'\to \RGamma((G_{k'})_1,k')$ is an equivalence, so for the purposes of proving the proposition we may assume that $k$ is algebraically closed. In particular, we may choose a Borel subgroup $B\subset G$ with a split maximal torus. Furthermore, we may assume that $G$ is semi-simple and simply connected because our statement is invariant under replacing $G$ by an isogenous reductive group:

\begin{lm}\label{stacks: isogeny invariant}
Let $\psi:G\to G'$ be a surjection of reductive groups over $k$ such that $\ker\psi$ is a group scheme of multiplicative type (i.e. embeddable into $\bG_{m,k}^r$) contained in the center of $G$. The map $k\to \RGamma(G_1,k)$ has a splitting in $D^+(\Rep G^{(1)})$ if and only if the map $k\to \RGamma(G'_1,k)$ has a splitting in $D^+(\Rep G^{'(1)})$.
\end{lm}

\begin{proof}
We have a short exact sequence of finite group schemes $1\to (\ker \psi)_1\to G_1\to G'_1\to 1$, and since the group scheme $(\ker\psi)_1$ has no higher cohomology, the natural map $\RGamma(G'_1,k)\to \RGamma(G_1,k)$ is a quasi-isomorphism. This equivalence is compatible with the actions of $G^{(1)}$ and $G^{'(1)}$: the complex $\RGamma(G_1,k)$ is the object of $D^+(\Rep G^{(1)})$ obtained from $\RGamma(G'_1,k)\in D^+(\Rep G^{'(1)})$ by restricting along the map $\psi^{(1)}:G^{(1)}\to G^{'(1)}$. 

Therefore the class in $\Hom_{D(G^{(1)})}(\tau^{\geq 1}\RGamma(G_1,k),k[2])$ obstructing the splitting of $k\to \RGamma(G_1,k)$ is the image of the analogous obstruction for $G'$ under the map \begin{equation}
    \Hom_{D(G^{'(1)})}(\tau^{\geq 1}\RGamma(G_1,k),k[2])\to \Hom_{D(G^{(1)})}(\tau^{\geq 1}\RGamma(G_1,k),k[2])
\end{equation}
which is injective because $\ker\psi^{(1)}$ has no higher cohomology. Hence one of these two obstructions vanishes if and only if the other does.
\end{proof}

The assumption that $G$ is simply connected allows us to consider the Steinberg representation $\St$ of $G$ defined as the irreducible representation of highest weight $(p-1)\rho$, where $\rho$ is the weight of the maximal torus $T\subset B$ equal to the half sum of all positive roots. For example, for $G=SL_2$ the Steinberg representation is the $(p-1)$th symmetric power $\Sym^{p-1}V$ of the $2$-dimensional tautological representation $V$.

The restriction of $\St$ to the Frobenius kernel $G_1$ is an irreducible projective object of the category of representations of $G_1$ by \cite[II.3.10(3), II.3.18, Proposition II.10.2]{jantzen}. In particular, $Q:=\Hom_k(\St,\St)\simeq \St\otimes\St^{\vee}$ is a projective object, and since every finite-dimensional projective representation of a group scheme is also injective, $H^i(G_1,Q)$ vanishes for $i>0$, while $H^0(G_1,Q)$ is $1$-dimensional, spanned by $\id_{\St}\in Q$.

Therefore the map $k\xrightarrow{\id_{\St}}Q$ gives rise to a map $\RGamma(G_1, k)\to \RGamma(G_1,Q|_{G_1})\simeq k$ in $D(\Rep G^{(1)})$ inducing an isomorphism on $0$th cohomology, and this is the desired splitting.
\end{proof}

We can now deduce Theorem \ref{stacks: main htdr bg}, just as in Corollary \ref{qfsplit: qfsplit decomposable}.

\begin{proof}[Proof of Theorem \ref{stacks: main htdr bg}]
Let us check that the morphism $F_{BG}:BG\to BG^{(1)}$ satisfies projection formula on the derived category of bounded below objects $D^+(BG^{(1)})\simeq D^+(\Rep G^{(1)})$. For a complex of representations $V\in D^+(\Rep G^{(1)})$ the natural map $V\otimes_{\cO_{BG}}RF_{BG*}\cO_{BG}\to RF_{BG*}F_{BG}^*V$ is described in terms of group cohomology as the map $V\otimes_k \RGamma(G_1, k)\to \RGamma(G_1, V^{\triv})$, where $V^{\triv}$ is the complex of $k$-vector spaces underlying $V$ equipped with the trivial $G_1$-action. This map is an equivalence because $k[G_1]$ is finite-dimensional over $k$, so the formation of the standard bar complex of a $G_1$-representation $W$ commutes with arbitrary colimits in $W$.

Therefore Proposition \ref{stacks: frobenius decomposition} applied to $X=BG$ can be restated as
\begin{equation}
RF_{BG*}\cO_{BG}\otimes_{\cO_{BG^{(1)}}}F_{*}\Omega^{\bullet}_{BG/k}\simeq \bigoplus\limits_{i\geq 0}RF_{BG*}\cO_{BG}\otimes_{\cO_{BG^{(1)}}} \Lambda^iL_{BG^{(1)}/k}[-i]
\end{equation}
As in the proof of Corollary \ref{qfsplit: qfsplit decomposable}, the fact that the map $\cO_{BG^{(1)}}\to RF_{BG*}\cO_{BG}$ admits a splitting gives a quasi-isomorphism
\begin{equation}\label{stacks: bg de rham complex decomposition formula}
F_{*}\Omega^{\bullet}_{BG/k}\simeq \bigoplus\limits_{i\geq 0} \Lambda^iL_{BG^{(1)}/k}[-i]
\end{equation}
We can now finish the proof via the Deligne-Illusie argument using finite-dimensionality of Hodge cohomology for $BG$, cf. \cite[Corollary 1.3.24]{kubrak-prikhodko-modp}. Decomposition (\ref{stacks: bg de rham complex decomposition formula}) implies that the conjugate spectral sequence $E_{2}^{i,j}=H^i(BG^{(1)},\Lambda^j L_{BG^{(1)}/k})\Rightarrow H^{i+j}_{\dR}(BG/k)$ for the stack $BG$ degenerates at the second page. For all $i,j$ the term $E_{2}^{i,j}\simeq H^{i-j}(G,S^j(\mathfrak{g}^*))^{(1)}$ is a finite-dimensional $k$-vector space by \cite[Proposition II.4.10(a) + Corollary II.4.7(c)]{jantzen}. Hence degeneration of the conjugate spectral sequence gives equality of dimensions $\dim_k H^{n}_{\dR}(BG/k)=\sum\limits_{i+j=n}\dim_k H^i(BG^{(1)},\Lambda^j L_{BG^{(1)}/k})$, and the Hodge-to-de Rham spectral sequence degenerates by dimension reasons.
\end{proof}

\section{Decomposition after Frobenius pullback via semi-perfect descent}\label{qrsp}

In this section we give another proof of the decomposition (\ref{intro: frob decomposition formula}), in the case $S=\Spec k$ is the spectrum of a perfect field, which does not use the de Rham stack, but rather appeals to computing de Rham cohomology of a smooth scheme in characteristic $p$ via descent from quasiregular semi-perfect algebras.

\begin{definition}[{\hspace{1sp}\cite[Definition 8.8]{bms2}}]
An $\bF_p$-algebra $R$ is {\it quasiregular semiperfect} if the Frobenius $\varphi_R:R\to R$ is surjective, and the cotangent complex $L_{R/\bF_p}$ is concentrated in cohomological degree $-1$, with $H^{-1}(L_{R/\bF_p})$ being a flat $R$-module.
\end{definition}

For $R$ quasiregular semiperfect we denote by $R^{\flat}:=\lim\limits_{\varphi_R}R$ the inverse limit perfection of $R$. It surjects onto $R$, and we denote the kernel of the surjection by $I:=\ker(R^{\flat}\twoheadrightarrow R)$. The derived de Rham cohomology of $R$ relative to $\bF_p$ is concentrated in degree $0$ and can be described, together with its conjugate filtration, as follows.

Let $D_I(R^{\flat})$ denote the divided power envelope of the ideal $I\subset R^{\flat}$. By its universal property, there is a surjection $D_I(R^{\flat})\to R$, and the natural map $R^{\flat}\to D_I(R^{\flat})$ factors through $R^{\flat}/\varphi_{R^{\flat}}(I)$, making $D_I(R^{\flat})$ into a $R^{\flat}/\varphi_{R^{\flat}}(I)$-algebra. Since $R^{\flat}$ is perfect, its Frobenius endomorphism gives rise to an isomorphism $R=R^{\flat}/I\simeq R^{\flat}/\varphi_{R^{\flat}}(I)$ and we view $D_I(R^{\flat})$ as an $R$-algebra via this isomorphism.

Following \cite[Section 8]{bms2}, we define the $\bN$-indexed increasing conjugate filtration on $D_I(R^{\flat})$. Let $\Fil_n^{\conj}D_I(R^{\flat})$ be the $R^{\flat}$-submodule of $D_I(R^{\flat})$ generated by elements of the form $a_1^{[l_1]}\cdot\ldots \cdot a_m^{[l_m]}$ with $a_1,\ldots, a_m\in I$ and $l_1,\ldots l_m\in\bN$ such that $\sum\limits l_i<(n+1)p$. We denote by $\dR_{R/\bF_p}$ the derived de Rham cohomology of $R$, equipped with the conjugate filtration, cf. \cite[Proposition 3.5]{bhatt-derived-derham}. The $0$th step of the conjugate filtration is $R$ itself, which makes $\dR_{R/\bF_p}$ into a (a priori derived) $R$-algebra.

\begin{pr}[{\hspace{1sp}\cite[Proposition 8.12]{bms2}}]
There is a natural isomorphism $\dR_{R/\bF_p}\simeq D_I(R^{\flat})$ of discrete $R$-algebras, carrying the conjugate filtration on the derived de Rham cohomology to the filtration $\Fil^{\conj}_{\bullet}D_I(R^{\flat})$ defined above.
\end{pr}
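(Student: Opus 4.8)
The plan is to realise both $\dR_{R/\bF_p}$ and $D_I(R^{\flat})$ as discrete filtered $R$-algebras with exhaustive conjugate filtrations, to produce a filtered comparison map between them, and to check that it is an isomorphism on associated graded pieces. For discreteness: since $R^{\flat}$ is perfect, $L_{R^{\flat}/\bF_p}\simeq 0$, so $\dR_{R^{\flat}/\bF_p}\simeq R^{\flat}$ and transitivity gives a filtered equivalence $\dR_{R/\bF_p}\simeq\dR_{R/R^{\flat}}$; the conjugate filtration is then increasing and exhaustive with $\Fil_{-1}=0$, $\Fil_0=R$, and $\gr^{\conj}_n\simeq\Lambda^nL_{R^{(1)}/R^{\flat}}[-n]$ for $R^{(1)}=R\otimes_{R^{\flat},\varphi}R^{\flat}=R^{\flat}/\varphi(I)$ --- precisely the ring through which $D_I(R^{\flat})$ is an $R$-algebra in the statement. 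The quasiregular semiperfect hypothesis gives $L_{R/\bF_p}\simeq N[1]$ with $N=H^{-1}(L_{R/\bF_p})\cong I/I^2$ flat over $R$, and the d\'ecalage identity $\Lambda^n(M[1])\simeq(\Gamma^nM)[n]$ for flat $M$ then yields $\gr^{\conj}_n\dR_{R/\bF_p}\simeq\Gamma^n_{R^{(1)}}(\varphi(I)/\varphi(I)^2)$, concentrated in degree $0$; an induction along the conjugate filtration shows $\dR_{R/\bF_p}$ is discrete, which is what makes the statement about discrete $R$-algebras meaningful.

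To construct the comparison map I would invoke the universal property of the divided power envelope. The map $R^{\flat}=\dR_{R^{\flat}/\bF_p}\to\dR_{R/\bF_p}$ makes the target an $R^{\flat}$-algebra, the Hodge-filtration quotient gives a surjection $\dR_{R/\bF_p}\twoheadrightarrow R$, and the kernel of this surjection carries a natural PD-structure into which $I$ maps compatibly; this is special to characteristic $p$ and can be obtained by left Kan extension from the case of a polynomial $\bF_p$-algebra $P$ and an ideal $J\subset P$, where $\dR_{(P/J)/P}$ has its classical PD augmentation ideal (equivalently, via the crystalline comparison). The universal property of $D_I(R^{\flat})$ then produces a map of $R^{\flat}$-algebras $D_I(R^{\flat})\to\dR_{R/\bF_p}$, and one checks it is filtered by tracking the generators $a_1^{[l_1]}\cdots a_m^{[l_m]}$ with $a_i\in I$: the shift by $p$ in the bound $\sum l_i<(n+1)p$ records the relation $a^p=p!\,a^{[p]}=0$, so that only the divided powers $a^{[p^k]}$ are genuinely new and a product of them raises the conjugate level by one per factor.

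It then remains to see that this map is an isomorphism on $\gr^{\conj}_n$, after which exhaustiveness of both filtrations concludes. Since the conjugate graded of $D_I(R^{\flat})$ depends only on the conormal module, flatness of $I/I^2$ lets one localise on $\Spec R^{\flat}$ and reduce to the case $I=(t_1,\dots,t_d)$ generated by a regular sequence, where $D_I(R^{\flat})=R^{\flat}\langle t_1,\dots,t_d\rangle$. There a direct computation --- reducing via the Koszul resolution to the classical identification $\dR_{\bF_p/\bF_p[x_1,\dots,x_d]}\simeq D_{(x_1,\dots,x_d)}(\bF_p[x_1,\dots,x_d])$ --- shows that $\gr^{\conj}_nD_I(R^{\flat})$ is free over $R^{(1)}=R^{\flat}/\varphi(I)$ on the classes $\prod_i t_i^{[pq_i]}$ with $q_i\ge 0$ and $\sum_i q_i=n$ (the would-be contributions with $\sum q_i<n$ dying modulo $\Fil^{\conj}_{n-1}$), i.e. is canonically $\Gamma^n_{R^{(1)}}(\varphi(I)/\varphi(I)^2)$, and that the comparison map carries this isomorphically onto the graded computed in the first step.

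I expect the main obstacle to be exactly this last combinatorial identification: checking that the filtration cut out by $\sum l_i<(n+1)p$ is precisely the one whose $n$-th graded piece is $\Gamma^n$ of the Frobenius-twisted conormal module, which rests on Kummer's and Lucas's congruences for multinomial coefficients, on the factorisation of $a^{[m]}$ through the base-$p$ digits of $m$, and on $p!\equiv 0$, all while keeping the $\varphi(I)$-versus-$I$ twist straight. The other point requiring care (or a precise citation) is the existence and naturality of the PD-structure on the augmentation ideal of $\dR_{R/\bF_p}$ used to build the comparison map.
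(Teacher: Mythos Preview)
The paper does not give its own proof of this statement: it is quoted verbatim as \cite[Proposition 8.12]{bms2} and used as a black box, so there is no in-paper argument to compare against. Your outline is essentially the proof given in the cited reference: reduce to $\dR_{R/R^\flat}$ using $L_{R^\flat/\bF_p}\simeq 0$, show discreteness from the conjugate filtration and the d\'ecalage identification $\Lambda^n(N[1])\simeq\Gamma^n(N)[n]$ for $N=I/I^2$ flat, produce a comparison map from the universal property of the PD envelope (equivalently, the derived crystalline comparison), and match the associated graded pieces.

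Your sketch is sound; the two points you flag as delicate are indeed the ones that require care. For the PD-structure on the augmentation ideal of $\dR_{R/R^\flat}$, the cleanest route is exactly the one you hint at: the derived crystalline--de Rham comparison identifies $\dR_{R/R^\flat}$ with $\RGamma_{\cris}(R/R^\flat)$, which for an affine is computed by the PD envelope $D_I(R^\flat)$ itself, giving the map (and in fact the isomorphism) directly without an independent construction of divided powers on the de Rham side. For the combinatorial step, your diagnosis is correct: the shift by $p$ in the bound $\sum l_i<(n+1)p$ encodes that $a^p=0$ in characteristic $p$, so the ``new'' generators in level $n$ are exactly products $\prod a_i^{[pl_i]}$ with $\sum l_i=n$, which is what the paper records immediately after the cited proposition in formula~(\ref{qrsp: graded pieces map}).
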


Recall also that associated graded algebra of the conjugate filtration $\Fil_{\bullet}^{\conj}D_{I}(R^{\flat})$ is identified with the free divided power algebra generated by the flat $R$-module $I/I^2$, via the map\footnote{In \cite[Proposition 8.11(3)]{bms2} one finds a different formula, but it simplifies modulo $p$ because $\frac{(pk)!}{p^kk!}$ equals $(-1)^k$ modulo $p$, for all $k\geq 1$.}
\begin{multline}\label{qrsp: graded pieces map}
a_1^{[l_1]}\cdot\ldots\cdot a_m^{[l_m]}\mapsto (-1)^{l_1+\ldots+l_m}\ta_1^{[pl_1]}\cdot\ldots\cdot \ta_m^{[pl_m]}:\Gamma^n_{R}(I/I^2)\to \gr_n^{\conj}D_I(R^{\flat})
\end{multline}
where $\ta_i\in I\subset R^{\flat}$ are arbitrary lifts of $a_i\in I/I^2$; the image of the element $\ta_1^{[pl_1]}\cdot\ldots\cdot \ta_m^{[pl_m]}$ under the map $\Fil_n^{\conj}D_I(R^{\flat})\to \gr_n^{\conj}D_{I}(R^{\flat})$ does not depend on the choice of the lifts. 

We prove an analog of Proposition \ref{dr stack: decomposition after frobenius} for quasiregular semiperfect rings, which becomes a construction in plain commutative algebra because the de Rham complex is now concentrated in a single degree:

\begin{pr}\label{qrsp: affine decomposition}
There is a natural isomorphism of filtered $R$-algebras
\begin{equation}
s:\bigoplus\limits_{n\geq 0}\varphi_R^*\Gamma^n_R(I/I^2)\simeq \varphi_R^*D_I(R^{\flat})
\end{equation}
identifying the base change of the conjugate filtration on the right-hand side with the filtration induced by the grading on the left-hand side.
\end{pr}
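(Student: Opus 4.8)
The plan is to construct the inverse isomorphism $\Phi:=s^{-1}\colon\bigoplus_{n\ge0}\varphi_R^*\Gamma_R^n(I/I^2)\to\varphi_R^*D_I(R^\flat)$ as a map of filtered $R$-algebras, and then check that it induces an isomorphism on associated graded pieces; since both filtrations are exhaustive with $\Fil_{-1}=0$, a filtered map which is an isomorphism on graded pieces is automatically a filtered isomorphism (induction on the filtration degree), so $s:=\Phi^{-1}$ will then have the asserted properties. As a preliminary I will record that flatness of $I/I^2$ makes $D_I(R^\flat)$ flat over $R$ (it is a filtered colimit of iterated extensions of the flat modules $\Gamma_R^n(I/I^2)$), so $\varphi_R^*D_I(R^\flat)$ is a flat $R$-algebra whose conjugate filtration is the base change of that on $D_I(R^\flat)$, with graded pieces $\Gamma_R^n(\varphi_R^*(I/I^2))$, and whose divided power ideal inherits divided powers.

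To build $\Phi$ I will use that $\bigoplus_n\Gamma_R^n(\varphi_R^*(I/I^2))$ is the free divided power algebra on $\varphi_R^*(I/I^2)$ placed in filtration degree $1$: it suffices to give an $R$-linear map $\lambda\colon\varphi_R^*(I/I^2)\to\Fil_1^{\conj}\varphi_R^*D_I(R^\flat)$ into the divided power ideal, and I will take $\lambda(a\otimes1):=-\tilde a^{[p]}\otimes1$ for any lift $\tilde a\in I\subset R^\flat$ of $a\in I/I^2$, extended $\varphi_R$-semilinearly. The single identity I will lean on is that in $\varphi_R^*D_I(R^\flat)=D_I(R^\flat)\otimes_{R,\varphi_R}R$ one has $\iota(c)\otimes1=1\otimes(c\bmod I)$ for every $c\in R^\flat$, where $\iota\colon R^\flat\to D_I(R^\flat)$ is the structure map: writing $c=(c^{1/p})^p$ in the perfect ring $R^\flat$ and using the description of the $R$-module structure on $D_I(R^\flat)$ recalled above, under which $\bar x\in R=R^\flat/I$ acts by multiplication by $\iota(x^p)$, the element $\iota(c)$ equals $\overline{c^{1/p}}\cdot1$, so $\iota(c)\otimes1=1\otimes\varphi_R(\overline{c^{1/p}})=1\otimes(c\bmod I)$; in particular $\iota(c)\otimes1=0$ whenever $c\in I$.

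The step I expect to be the real work is the well-definedness (and additivity) of $\lambda$. Replacing $\tilde a$ by $\tilde a+b$ with $b\in I^2$ changes $\tilde a^{[p]}$ by $\sum_{j=1}^{p}\tilde a^{[p-j]}b^{[j]}$; for $1\le j\le p-1$ both $j!$ and $(p-j)!$ are units in $\bF_p$, so this term is a unit multiple of $\iota(\tilde a^{\,p-j}b^{j})$ with $\tilde a^{\,p-j}b^{j}\in I^{p+j}\subseteq I$, hence maps to $0$ in $\varphi_R^*D_I(R^\flat)$ by the identity above; for $j=p$, writing $b=\sum_s c_sd_s$ with $c_s,d_s\in I$, the term $b^{[p]}$ is a sum over $\sum_s\ell_s=p$ of products $\prod_s(c_sd_s)^{[\ell_s]}$, in which a factor with $\ell_s=p$ vanishes since $(c_sd_s)^{[p]}=\iota(c_s^p)\iota(d_s)^{[p]}=0$ (because $c_s^p\in\varphi_{R^\flat}(I)$), while the remaining products are unit multiples of $\iota\bigl(\prod_s(c_sd_s)^{\ell_s}\bigr)$ with $\prod_s(c_sd_s)^{\ell_s}\in I^{2p}\subseteq I$, hence also map to $0$. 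The same bookkeeping, applied to the cross terms $\tilde a^{[j]}\tilde a'^{[p-j]}$ ($1\le j\le p-1$, each a unit multiple of $\iota(\tilde a^{\,j}\tilde a'^{\,p-j})$ with $\tilde a^{\,j}\tilde a'^{\,p-j}\in I^{p}$) in the expansion of $(\tilde a+\tilde a')^{[p]}$, yields additivity of $\lambda$, and the divided power axiom $(r\tilde a)^{[p]}=r^p\tilde a^{[p]}$ gives the required $\varphi_R$-semilinearity; so $\lambda$, hence $\Phi$, is well defined.

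Finally I will compute $\gr(\Phi)$. From the divided power identity $(x^{[p]})^{[\ell]}=\tfrac{(p\ell)!}{\ell!\,(p!)^{\ell}}x^{[p\ell]}$ and the congruence $\tfrac{(p\ell)!}{\ell!\,(p!)^{\ell}}\equiv1\pmod p$ (group the factors of $(p\ell)!$ by divisibility by $p$ and apply Wilson's theorem, as in the footnote to (\ref{qrsp: graded pieces map})), I get $\Phi\bigl(a_1^{[\ell_1]}\cdots a_m^{[\ell_m]}\bigr)=(-1)^{n}\bigl(\tilde a_1^{[p\ell_1]}\cdots\tilde a_m^{[p\ell_m]}\bigr)\otimes1$ for $\sum_i\ell_i=n$; the right-hand side lies in $\Fil_n^{\conj}$ (the total exponent is $pn<(n+1)p$), so $\Phi$ is filtered, and modulo $\Fil_{n-1}^{\conj}$ it is exactly the class attached to $a_1^{[\ell_1]}\cdots a_m^{[\ell_m]}$ by the base change of the identification (\ref{qrsp: graded pieces map}). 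Hence $\gr_n(\Phi)$ is the identity for all $n$, $\Phi$ is a filtered isomorphism, and $s:=\Phi^{-1}$ is the desired isomorphism of filtered $R$-algebras; naturality in $R$ is clear, as every ingredient of the construction is functorial.
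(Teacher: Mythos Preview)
Your proof is correct and produces exactly the same map as the paper, via the same formula $a_1^{[\ell_1]}\cdots a_m^{[\ell_m]}\mapsto(-1)^n\,\tilde a_1^{[p\ell_1]}\cdots\tilde a_m^{[p\ell_m]}$, and concludes by the same check on associated graded pieces. The only difference is packaging: the paper writes this formula on all generators at once and verifies independence of the lifts $\tilde a_i$ for arbitrary exponent $l$ using the one-line observation $(b_1b_2)^{[i]}=b_1^i\,b_2^{[i]}\in I\cdot D_I(R^\flat)$, whereas you invoke the universal property of the free divided power algebra, reducing the well-definedness check to the single map $\lambda(a)=-\tilde a^{[p]}\otimes1$ at the cost of needing a divided power structure on $\varphi_R^*D_I(R^\flat)$ (which you correctly obtain from flatness of $D_I(R^\flat)$ over $R$). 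Your route has the advantage of making it transparent that the result is a map of divided power algebras. One notational slip: the map $\Phi$ you construct goes from $\bigoplus_n\varphi_R^*\Gamma_R^n(I/I^2)$ to $\varphi_R^*D_I(R^\flat)$, which is the direction of $s$ in the statement, so $\Phi=s$ rather than $\Phi=s^{-1}$.
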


\begin{proof}
Since $\varphi_{R}$ is surjective, we can identify the base change $\varphi_R^*D_I(R^{\flat})=D_I(R^{\flat})\otimes_{R,\varphi_{R}}R$ with the quotient $D_I(R^{\flat})/(\ker \varphi_{R}\cdot D_I(R^{\flat}))$. Define the splitting map $s$ on an element $a_1^{[l_1]}\cdot\ldots\cdot a_m^{[l_m]}\otimes 1\in\Gamma^n_R(I/I^2)\otimes_{R,\varphi_R}R$ by
\begin{equation}\label{qrsp: splitting formula}
s(a_1^{[l_1]}\cdot\ldots\cdot a_m^{[l_m]}\otimes 1):=(-1)^{l_1+\ldots+l_m}\ta_1^{[pl_1]}\cdot\ldots\cdot \ta_m^{[pl_m]}\in \Fil_n^{\conj}D_I(R^{\flat})/\ker\varphi_{R}
\end{equation}
where $\ta_i\in I\subset R^{\flat}$ are arbitrary lifts of $a_i\in I/I^2$. Let us check that the value of (\ref{qrsp: splitting formula}) does not depend on the choice of these lifts. Recall that an element $r\in R$ acts on $D_I(R^{\flat})$ via multiplication by $\widetilde{r}^p\in R^{\flat}$ where $\widetilde{r}\in R^{\flat}$ is any lift of $r$. In particular, $\ker\varphi_R\cdot \Fil_n^{\conj}D_I(R^{\flat})$ is simply $I\cdot \Fil_n^{\conj}D_I(R^{\flat})$. 

For elements $a\in I, b\in I^2$ we have $(a+b)^{[l]}=a^{[l]}+\sum\limits_{i>0}a^{[l-i]}\cdot b^{[i]}$ for any $l\geq 0$. For $i>0$ the divided power $b^{[i]}\in D_I(R^{\flat})$ lies in $\ker\varphi_{R}\cdot D_I(R^{\flat})$, because $b$ is a sum of elements of the form $b_1\cdot b_2$ with $b_1,b_2\in I$ and the element $(b_1b_2)^{[i]}=b_1^i\cdot b_2^{[i]}$ belongs to $I\cdot D_I(R^{\flat})$. Therefore, for a given $a\in I$ the element $(a+b)^{[l]}$ in $D_I(R^{\flat})/\ker\varphi_{R}\cdot D_I(R^{\flat})$ does not depend on the choice of $b\in I^2$.

The image of $\ta_1^{[pl_1]}\cdot\ldots\cdot \ta_m^{[pl_m]}$ under the map $$\Fil_n^{\conj}D_I(R^{\flat})/\ker\varphi_{R}\to \gr_n^{\conj}D_I(R^{\flat})/\ker\varphi_{R}$$ goes to $a_1^{[l_1]}\cdot\ldots\cdot a_m^{[l_m]}$ under the map (\ref{qrsp: graded pieces map}), hence $s$ extends to a map of $R$-modules, which is necessarily an isomorphism, as desired.
\end{proof}

We can now deduce the corresponding statement for smooth varieties over $k$. 

\begin{proof}[{Proof of (\ref{intro: frob decomposition formula}) from Theorem \ref{intro: main}}]For a smooth scheme $X$ over $k$ the perfection $X_{\perf}:=\lim\limits_{F_X}X$ is a faithfully flat cover of $X$, and the \v{C}ech nerve of this cover
\begin{equation}
\begin{tikzcd}
X_{\perf}\arrow[r, shift left=0.65ex] \arrow[r, shift right=0.65ex] & X_{\perf}\times_X X_{\perf} \arrow[r, shift left=1.3ex] \arrow[r, shift right=1.3ex] \arrow[r] &\ldots
\end{tikzcd}
\end{equation}
consists of quasiregular semiperfect schemes (i.e. schemes that locally are spectra of quasiregular semiperfect $k$-algebras), cf. \cite[Remark 8.15]{bms2}. For every $n\geq 0$, denote by $f_n:X_{\perf}^{\times_X (n+1)}\to X$ the structure morphism down to $X$.

By descent for Frobenius pullback of the cotangent complex established in Lemma \ref{stacks: frobenius pullback descent}, Frobenius pullback of the de Rham complex also satisfies flat descent, hence the complex $F_X^*F_{X*}\Omega^{\bullet}_X$ can be described by as the totalization of the following cosimplicial sheaf on $X$:
\begin{equation}
\begin{tikzcd}
f_{0*}F_{X_{\perf}}^*\dR_{X_{\perf}} \arrow[r, shift left=0.65ex] \arrow[r, shift right=0.65ex] & f_{1*}F_{X^{\times_X 2}_{\perf}}^*\dR_{X^{\times_X 2}_{\perf}} \arrow[r, shift left=1.3ex] \arrow[r, shift right=1.3ex] \arrow[r] &\ldots
\end{tikzcd}
\end{equation}
The decomposition constructed in Proposition \ref{qrsp: affine decomposition} gives rise to the splitting of the conjugate filtration on this cosimplicial sheaf, giving the desired decomposition of $F^*_XF_{X*}\Omega^{\bullet}_{X}$.
\end{proof}

\appendix
\section{Automatic continuity of module homomorphisms}\label{continuity}

In this appendix we prove that often a map out of an infinite product of modules is automatically continuous for the product topology, and apply this to show that a splitting of the map $\cO_X\to F_*W(\cO_X)/p$ gives rise to a $n$-quasi-$F$-splitting for some $n\geq 1$. The following result has been shown by Lady \cite[Example 6]{lady}, we give here a self-contained proof. Arguments of this type go back to Specker \cite[Satz III]{specker}.

\begin{lm}\label{continuity: slender}
Let $R$ be a smooth domain over a field $k$, of Krull dimension $>0$. Then
\begin{enumerate}
\item For a collection of $R$-modules $M_1, M_2,\ldots$ any map $\prod\limits_{i=1}^{\infty}M_i\to R$ factors through a finite product $\prod\limits_{i=1}^{N}M_i$.
\item Given an inverse system $\ldots\twoheadrightarrow N_2\twoheadrightarrow N_1$ of projective $R$-modules indexed by $\bN$, with all transition maps surjective, for any finite projective $R$-module $M$ the natural map is an isomorphism:
\begin{equation}
\colim\limits_i\Hom_R(N_i,M)\to \Hom_R(\lim\limits_{i}N_i, M)
\end{equation}
\end{enumerate}
\end{lm}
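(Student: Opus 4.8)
The two statements are a commutative-algebra incarnation of Specker's theorem that $\bZ$ is a slender abelian group, and the plan is to prove (1) by adapting Specker's argument (as in Lady's treatment) and then to deduce (2) from (1) by a purely formal manipulation. The only special feature of $R$ that enters is that, being a Noetherian domain of positive Krull dimension, it contains a nonzero non-unit $t$, and then by the Krull intersection theorem $\bigcap_{n\ge 1} t^nR = 0$, while $R$ is not $t$-adically complete. Note first that (1) is equivalent to finding $N$ with $\phi$ vanishing on the subproduct $\prod_{i>N}M_i\subset\prod_{i\ge 1}M_i$ of elements whose first $N$ coordinates are zero: then $\phi(x)=\phi(x_1,\dots,x_N,0,0,\dots)$ depends only on the first $N$ coordinates.

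For (1) I would argue in two steps, writing $e_i(a)\in\prod_j M_j$ for the element with $i$-th component $a\in M_i$ and all others zero. First: the $R$-linear maps $\phi_i\colon M_i\to R$, $a\mapsto\phi(e_i(a))$, vanish for all but finitely many $i$. If not, after composing $\phi$ with the inclusion of the subproduct indexed by the infinite set $\{i:\phi_i\ne 0\}$ and reindexing, we may assume $\phi_i\ne 0$ for every $i$; choose $a_i$ with $r_i:=\phi(e_i(a_i))\ne 0$, and run the Specker-type diagonal construction: one builds scalars $c_i\in R$ recursively, using at each stage the remaining freedom in the tail coordinates together with the finiteness of the $t$-adic orders of the $r_i$, so that the single element $x=(c_ia_i)_i\in\prod_i M_i$ has $\phi(x)$ forced to be divisible by every power of $t$ — hence $\phi(x)=0$ by $t$-adic separatedness — while the design of the partial sums $\sum_{i\le m}c_ir_i$ makes $\phi(x)\notin t^NR$ for a suitable $N$, a contradiction. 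Second: once $\phi_i=0$ for $i>N$, the restriction of $\phi$ to $\prod_{i>N}M_i$ is a homomorphism to $R$ killing every $e_i(a)$, and a second application of the same Specker-type argument (the obstruction now being that $\phi$ would otherwise yield a nonzero element of $R$ that is $t$-divisible to all orders) shows this restriction is identically zero. Hence $\phi$ factors through $\prod_{i\le N}M_i$.

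For (2), injectivity of the natural map $\colim_i\Hom_R(N_i,M)\to\Hom_R(\lim_i N_i,M)$ is immediate: since the transition maps are surjective, each projection $\lim_i N_i\to N_i$ is surjective, so a homomorphism $N_i\to M$ that dies on $\lim_i N_i$ was already zero. For surjectivity, use that each $N_i$ is projective: the surjection $N_{i+1}\twoheadrightarrow N_i$ then splits, so $N_{i+1}\cong N_i\oplus L_i$ with $L_i$ a projective $R$-module and the transition map the projection. Iterating realizes the tower as $N_1\leftarrow N_1\oplus L_1\leftarrow N_1\oplus L_1\oplus L_2\leftarrow\cdots$ with projections as maps, so $\lim_i N_i\cong N_1\oplus\prod_{j\ge 1}L_j$, the projection onto $N_i$ being the projection onto the first summands. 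Given $g\colon\lim_i N_i\to M$: since $M$ is a direct summand of a free module $R^n$, composing $g$ with the inclusion $M\hookrightarrow R^n$ and working one coordinate at a time reduces us to $M=R$ (if $\iota g$ factors through some $N_j$, then so does $g=\pi\circ\iota g$). Now the restriction of $g$ to $\prod_{j\ge 1}L_j$ is a homomorphism to $R$, so by (1) it factors through $\prod_{j\le J}L_j$ for some $J$; hence $g$ factors through $N_1\oplus L_1\oplus\cdots\oplus L_J=N_{J+1}$, proving surjectivity.

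The main obstacle is the combinatorial heart of the first step of (1) — the recursive construction of the element $x$ — which is exactly the module-theoretic form of Specker's original argument; the reduction of (2) to (1) and all the bookkeeping above are routine.
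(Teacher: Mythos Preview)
Your reduction of (2) to (1) is correct and is exactly what the paper does.

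For (1), however, the outline has a real gap. In Step~1, tail divisibility of $x=(c_ia_i)$ with $c_i\in t^{n_i}R$ only yields the congruence
\[
\phi(x)\equiv \sum_{i\le m}c_ir_i \pmod{t^{n_{m+1}}},
\]
which pins down the image of $\phi(x)$ in the $t$-adic completion; it does \emph{not} force $\phi(x)\in t^{n_{m+1}}R$. So you cannot simultaneously arrange $\phi(x)\in\bigcap_n t^nR$ and $\phi(x)\notin t^NR$: the first would require $S_m:=\sum_{i\le m}c_ir_i\in t^{n_{m+1}}R$ for all $m$, which already kills the second. What the congruences do give is that $\phi(x)$ equals $\sum_i c_ir_i$ in $\hat R_t$, and the contradiction must come from showing this limit cannot always lie in $R\subset\hat R_t$; that is a different argument from the one you sketched. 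Your Step~2 is also not ``the same argument'': once all $r_i=0$ the partial sums are identically zero and the construction yields no information. A correct Step~2 needs a genuinely new ingredient (for instance, two comaximal non-units $s,t$ and the decomposition $x_i=\alpha_is^ix_i+\beta_it^ix_i$, which forces $\phi$ of each summand sequence into $\bigcap s^nR$ and $\bigcap t^nR$ respectively).

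The paper avoids the two-step split entirely and argues differently. It first uses Noether normalization and the smoothness hypothesis (via Cohen--Macaulayness and miracle flatness) to reduce to $R=k[x_1,\dots,x_n]$. Assuming $f$ does not factor through any finite product, it picks $a_{j}\in\prod_{i\ge j}M_i$ (not single coordinates) with $f(a_j)\neq 0$, proves the continuity identity $f(\sum_j x_1^{n_j}a_j)=\sum_j x_1^{n_j}f(a_j)$ in $k[x_2,\dots,x_n][[x_1]]$, and then varies the exponents over $\{0,1\}$-sequences to produce an uncountable $k$-linearly independent family inside the image of $f$, contradicting $\dim_k R=\aleph_0$. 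So the crucial step is a cardinality/linear-independence argument, not an order-of-vanishing one.
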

\begin{proof}
Let us first deduce (2) from (1). Choosing arbitrary sections of the surjections $N_{i+1}\to N_i$ we may assume that the inverse system has the form $N_i=\prod\limits_{j=1}^i P_j$, for some projective $R$-modules $P_1,P_2,\ldots$, with transition maps $\prod\limits_{j=1}^{i+1} P_j\to \prod\limits_{j=1}^i P_j$ given by the projection onto first $j$ factors. Since $M$ can be embedded into a finite free $R$-module, we may assume that $M=R$, so we arrived at a special case of (1).

Let us now prove (1). Using Noether normalization, choose a polynomial subring $k[x_1,\ldots, x_n]\subset R$ such that $R$ is a finite $k[x_1,\ldots, x_n]$-module. We assumed that $R$ is a smooth $k$-algebra, in particular, it is a Cohen-Macaulay ring \cite[00NQ]{stacks}, so by miracle flatness \cite[00R4]{stacks} the ring $R$ is flat over $k[x_1,\ldots, x_n]$. Therefore, $R$ is a finite projective $k[x_1,\ldots, x_n]$-module, so it admits a $k[x_1,\ldots,x_n]$-linear embedding into a finite free $k[x_1,\ldots, x_n]$-module (it is in fact free itself by Quillen-Suslin theorem, but we do not need to use this), and therefore it is enough to prove the lemma in the case $R=k[x_1,\ldots, x_n]$. 

We assume that $R$ is the polynomial ring $k[x_1,\ldots, x_n]$ from now on. Given a map of $R$-modules $f:\prod\limits_{i=1}^{\infty}M_i\to R$ we will first prove that it satisfies the following continuity property. Consider any sequence of elements $a_j\in \prod\limits_{i=j}^{\infty}M_i\subset \prod\limits_{i=1}^{\infty}M_i$, for $j\geq 0$. Given any strictly increasing sequence $n_1,n_2,\ldots$ of non-negative integers we denote by $\sum\limits_{i=1}^{\infty}x_1^{n_i}a_i$ the element of $\prod\limits_{i=1}^{\infty}M_i$ whose projection onto $\prod\limits_{i=1}^{j}M_i$ coincides with that of $\sum\limits_{i=1}^{j}x_1^{n_i}a_i$, for all $j\geq 1$. We will check that the composition $\prod\limits^{\infty}_{i=1}M_i\xrightarrow{f}R\hookrightarrow k[x_2,\ldots, x_n][[x_1]]$ takes $\sum\limits_{i=1}^{\infty}x_1^{n_i}a_i$ to the series $\sum\limits_{i=1}^{\infty}x_1^{n_i}f(a_i)$.

Indeed, for every $m$ the difference
\begin{multline}
f(\sum\limits_{i=1}^{\infty}x_1^{n_i}a_i)-\sum\limits_{i=1}^{\infty}x_1^{n_1}f(a_i)=f(\sum\limits_{i=1}^{m}x_1^{n_i}a_i)-\sum\limits_{i=1}^{m}x_1^{n_1}f(a_i)+ \\ +f(\sum\limits_{i=m+1}^{\infty}x_1^{n_i}a_i)-\sum\limits_{i=m+1}^{\infty}x_1^{n_1}f(a_i)=f(\sum\limits_{i=m+1}^{\infty}x_1^{n_i}a_i)-\sum\limits_{i=m+1}^{\infty}x_1^{n_1}f(a_i)
\end{multline}
is divisible by $x_1^{n_{m+1}}$. We assumed that integers $n_{m+1}$ tend to infinity as $m$ goes to infinity, so this difference must be zero in $k[x_2,\ldots, x_n][[x_1]]$. 

We will now conclude that $f$ factors through the projection onto a finite product $\prod\limits_{i=1}^j M_i$. Assume, on the contrary, that for every $j$ there exists an element $a_{j+1}\in \prod\limits_{i=j+1}^{\infty}M_i\subset \prod\limits_{i=1}^{\infty}M_i$ such that $f(a_{j+1})\neq 0$. As we just established, for any increasing sequence $n_1,n_2,\ldots$ the element $\sum\limits_{i=1}^{\infty} x_1^{n_i}a_i$ is mapped to $\sum\limits_{i=1}^{\infty}x_1^{n_i}a_i\in k[x_2,\ldots,x_n][[x_1]]$ by the composition of $f$ with the embedding $R\hookrightarrow k[x_2,\ldots, x_n][[x_1]]$.

But we will now prove that for varying sequences $n_1,n_2,\ldots$ the elements $\sum\limits_{i=1}^{\infty} x_1^{n_i}\cdot f(a_i)$ span a subspace of $k[[x_1,\ldots, x_n]]$ that has uncountable dimension over $k$. Denote the maximal power of $x_1$ appearing in the polynomial $f(a_i)\in k[x_1,\ldots, x_n]$ by $d_i$. For a sequence $\eps_1,\eps_2,\ldots$ of elements of $\{0,1\}$ consider the series $$F_{\eps_1,\eps_2,\ldots}=\sum\limits_{i=1}^{\infty} x_1^{d_1+\ldots+d_{i-1}+2(i-1)+\eps_i}\cdot f(a_i)$$ For varying sequences $(\eps_i)_{i\in\bN}$ they form a linearly independent set, because the power series $F_{\eps_1,\eps_2,\ldots}$ has a non-zero coefficient of $x_1^{d_1+\ldots+d_i+2i-1}$ if and only if $\eps_i=1$. But, by construction, all $F_{\eps_1,\eps_2,\ldots}$ are in the image of the map $f$ and are therefore contained in the subspace $R\subset k[[x_1,\ldots, x_n]]$ of countable dimension over $k$, which gives us a contradiction.
\end{proof}

\begin{cor}\label{continuity: qf splitting}
Let $X$ be a smooth scheme of finite type over a perfect field $k$. If the map $\cO_X\to F_*W(\cO_X)/p$ admits an $\cO_X$-linear splitting, then so does the map $\cO_X\to F_*W_n(\cO_X)/p$, for some $n\geq 1$.
\end{cor}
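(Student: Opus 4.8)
The plan is to exploit that $F_*W(\cO_X)/p$ is, Zariski-locally, the inverse limit $\lim_n F_*W_n(\cO_X)/p$ of finite projective $\cO_X$-modules with surjective transition maps (Lemma \ref{witt intro: F flat}), together with the automatic-continuity statement of Lemma \ref{continuity: slender}(2), to show that the given splitting cannot ``see'' the deep Witt coordinates and therefore factors through a finite truncation.

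First I would reduce to the case $X$ connected: a splitting of $\cO_X\to F_*W(\cO_X)/p$ restricts to a splitting over each connected component, and $X$, being Noetherian, has only finitely many components, so it is enough to find an $n$ that works on each one and then take the maximum. A connected smooth $k$-scheme is integral, so I may assume $X$ integral. If $\dim X=0$ then $X=\Spec k'$ for a finite (hence perfect, since $k$ is) field extension $k'/k$; in that case $F_*W(k')/p=F_*W_1(k')/p=k'$ and $s_{X,1}$ is an isomorphism, so there is nothing to prove. Hence I may assume $\dim X>0$. Now cover $X$ by finitely many connected affine opens $U_i=\Spec R_i$, $i=1,\dots,m$; since $X$ is integral of positive dimension, each $R_i$ is a smooth domain of finite type over $k$ of Krull dimension $>0$. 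Write $\tau\colon F_*W(\cO_X)/p\to\cO_X$ for the given splitting. Over $U_i$ it is an $R_i$-linear map out of $\lim_n F_*W_n(R_i)/p$, where the $F_*W_n(R_i)/p$ are finite projective $R_i$-modules (Lemma \ref{witt intro: F flat}) with surjective transition maps; applying Lemma \ref{continuity: slender}(2) with $M=R_i$ shows that $\tau|_{U_i}$ factors through the truncation $F_*W(R_i)/p\to F_*W_{n_i}(R_i)/p$ for some $n_i\geq 1$. Put $n:=\max_i n_i$. Then on each $U_i$ the map $\tau$ kills $\ker\big(F_*W(\cO_X)/p\to F_*W_n(\cO_X)/p\big)|_{U_i}$; since the $U_i$ cover $X$ and this kernel is a subsheaf of $F_*W(\cO_X)/p$, the map $\tau$ kills $\ker\pi_n$ globally, where $\pi_n\colon F_*W(\cO_X)/p\twoheadrightarrow F_*W_n(\cO_X)/p$ is the truncation. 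Consequently $\tau$ factors uniquely as $\tau_n\circ\pi_n$ for an $\cO_X$-linear $\tau_n\colon F_*W_n(\cO_X)/p\to\cO_X$. Since $s_{X,n}=\pi_n\circ s_X$, this gives $\tau_n\circ s_{X,n}=\tau\circ s_X=\id_{\cO_X}$, so $\tau_n$ is the sought $n$-quasi-$F$-splitting.

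The essential and only non-formal input is Lemma \ref{continuity: slender}(2), which is already established above; everything else --- the reduction to integral $X$, the trivial zero-dimensional case, and gluing the local factorizations --- is routine. The one point I would spell out carefully is that the restriction of the kernel sheaf $\ker\pi_n$ to the affine $U_i$ is $\ker\big(F_*W(R_i)/p\to F_*W_n(R_i)/p\big)$ and that an $\cO_{U_i}$-linear map of quasi-coherent sheaves is detected on global sections, so that ``$\tau|_{U_i}$ factors after taking sections over $U_i$'' really does yield ``$\tau$ kills the kernel subsheaf over $U_i$''. I do not anticipate any serious obstacle beyond this bookkeeping.
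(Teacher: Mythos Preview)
Your proposal is correct and follows essentially the same route as the paper: discard the zero-dimensional components where the statement is trivial, pass to finitely many affine opens $\Spec R$ with $R$ a smooth domain of positive Krull dimension, invoke Lemma~\ref{continuity: slender}(2) for the inverse system $N_i=F_*W_i(R)/p$ and $M=R$, and then take the maximum of the resulting $n_i$ over the cover. The paper's write-up is terser (it does not spell out the kernel-subsheaf gluing or the reduction to integral $X$), but the argument is the same.
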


\begin{proof}
We can right away discard all zero-dimensional connected components of $X$ because over those the maps in question are isomorphisms. We therefore assume that every connected component of $X$ is positive-dimensional.

For each $n$, the sheaf $F_*W_n(\cO_X)/p$ is a locally free sheaf of $\cO_X$-modules of finite rank, see e.g. \cite[Proposition 2.9]{kttwyy}. Since $X$ is quasi-compact, to prove that a section $\tau:F_*W(\cO_X)/p\to \cO_X$ factors through $F_*W_n(\cO_X)/p$ for some $n$, it suffices to do so Zariski-locally on $X$. On an affine open $\Spec R\subset X$ this follows from Lemma \ref{continuity: slender} applied to the inverse system $N_i=F_*W_i(R)/p$ and $M=R$.
\end{proof}

\bibliographystyle{alpha}
\bibliography{bibfrobenius}

\end{document}